\documentclass[12pt]{article}

\usepackage[all]{xy}
\usepackage{amsmath,amsfonts,xcolor,amssymb,amsthm}
\usepackage{hyperref}

\newcommand{\dom}[1]{\mathrm{dom}(#1)}

\newcommand{\sym}[1]{\mathrm{sym}(#1)}

\usepackage{cite}

\theoremstyle{definition}
\newtheorem{lemma}{Lemma}
\newtheorem{theorem}[lemma]{Theorem}

\newtheorem{corollary}[lemma]{Corollary}
\newtheorem{proposition}[lemma]{Proposition}
\newtheorem{definition}[lemma]{Definition}
\newtheorem{ques}[lemma]{Question}

\newcommand{\AC}{\mathsf{AC}}
\newcommand{\ZFC}{\mathsf{ZFC}}
\newcommand{\ZFCM}{\mathsf{T}}
\newcommand{\ZF}{\mathsf{ZF}}
\newcommand{\R}{\mathbb{R}}
\newcommand{\stem}[1]{\mathrm{stem}(#1)}

\newcommand{\HC}{\mathbf{HC}}

\usepackage{comment}

\newcommand{\cz}{\mathbb{C}}
\newcommand{\pz}{\mathbb{P}}
\newcommand{\qz}{\mathbb{Q}}
\newcommand{\sz}{\mathbb{S}}

\begin{document}

\title{Descriptive Choice Principles and How to
Separate Them}
\author{Lucas Wansner\thanks{This work forms part of the doctoral dissertations of the two authors 
\cite{wansner2023aspects, wontner}. The authors would like to thank their supervisor, Benedikt L\"owe, for his help throughout.} \thanks{
Fachbereich Mathematik, University of Hamburg, Hamburg, Germany (\texttt{lucas.wansner@uni-hamburg.de})} \and Ned J.\ H.\ Wontner\footnotemark[1] \thanks{Institute for Logic, Language and Computation, University of Amsterdam, Amsterdam, the Netherlands (\texttt{e.j.h.wontner@uva.nl})}}

\maketitle

\begin{flushright}
\today\end{flushright}

\begin{quote}
\textbf{Abstract.} The axiom of countable choice for reals is one of the most basic fragments of the axiom of choice needed in many parts of mathematics. Descriptive choice principles are a further stratification of this fragment by the descriptive complexity of the sets. In this paper, we provide a separation technique for descriptive choice principles based on Jensen forcing. Our results generalise a theorem by Kanovei.
\end{quote}

\section{Introduction}
The axiom of choice $\mathsf{AC}$ states that every family of non-empty sets has a choice function. 
It can be stratified into fragments based on the index set of the family and the domain the non-empty sets come from. If we write $\mathsf{AC}_X(Y)$ for ``every family indexed by elements of $X$ of non-empty subsets of $Y$ has a choice function'', then $\mathsf{AC}$ is equivalent to ``$\mathsf{AC}_X(Y)$ holds for all sets $X$ and $Y$''.

Among these fragments of $\AC$, $\mathsf{AC}_\omega(\mathbb{R})$ or \emph{the countable axiom of choice for reals}, has a special position. Most of the uses of $\AC$ in ordinary mathematics are required because we want theorems to hold in general; when specialising the results to concrete mathematical objects, they can often be proved in $\mathsf{ZF}$. However, the uses of $\AC$ in the basic foundations of analysis are needed even when working with concrete mathematical objects such as the reals or the complex numbers. The fragment of $\AC$ most fundamental in this respect is the countable axiom of choice for reals $\mathsf{AC}_\omega(\mathbb{R})$. Like $\AC$ itself, $\mathsf{AC}_\omega(\mathbb{R})$ does not follow from $\mathsf{ZF}$ and there are pathological models of $\mathsf{ZF}+\neg\mathsf{AC}_\omega(\mathbb{R})$ where bizarre things can happen: the reals can be a countable union of countable sets or all subsets of the reals can be Borel (\cite{feferman1963independence}, or \cite[pg. 146]{cohen2008set}).\footnote{The first situation implies the second, as singletons are closed, hence ${\mathbf{\Sigma}^0_4}= \mathcal{P}(\mathbb{R})$. These both occur in the Feferman-L\'evy model. This collapse is optimal in $\ZF$ by \cite[Theorem 2.1]{miller2008long}.}

In this paper, we shall look at fragments of this particular fragment called \emph{descriptive choice principles}. These restrict $\mathsf{AC}_\omega(\mathbb{R})$ to those sequences of sets of reals that have a particular description. Here, we are mainly interested in the descriptions given by the projective pointclasses. Choice principles like this were studied in \cite{kanovei1979descriptive} who proved a first separation theorem for some of these principles. In this paper, we improve on Kanovei's result with a method based on Jensen forcing.

In \S\,2, we give all necessary definitions, formally state our main theorem, and derive its consequences.
In \S\,3, we introduce the notion of an \emph{$n$-slicing forcing notion} and prove our main theorem from the assumption that there is a slicing forcing notion.
In \S\,4, we generalise Jensen forcing to construct $n$-slicing forcing notions, thus completing the proof of our main theorem.
Finally, in \S\,5, we list some open questions.

\section{Definitions and the main results}

Our basic theory is Zermelo-Fraenkel set theory $\mathsf{ZF}$ without any fragment of $\AC$.
In some proofs, we need fragments of set theory without the power set axiom. We write 
$\mathsf{ZFC}^-$ for $\mathsf{ZFC}$ without the power set axiom and the collection scheme instead of the replacement scheme.\footnote{Cf.\ \cite{GHJZFCwopowerset} for a discussion of this theory.} The standard model of $\mathsf{ZFC}^-$ is the set of hereditarily countable sets $\HC$.

\subsection{Descriptive choice principles}

As usual in descriptive set theory, we call the elements of Baire space ${\omega^\omega}$ \emph{real numbers} or \emph{reals}. \emph{Par abus de langage}, we write $\mathbb{R}$ to refer to Baire space even though it is not homeomorphic to what is usually called the \emph{real line}. The standard real line plays no role in this paper and therefore, there is no possibility of confusion.
We use the standard notational convention that if $n\in\omega$ and $x\in \mathbb{R}$, then 
$$nx(k) := \left\{
\begin{array}{cl}
n & \mbox{if $k = 0$ and}\\
x(\ell) & \mbox{if $k = \ell+1$;}
\end{array}\right.$$
similarly for finite sequences $s\in\omega^{<\omega}$.

The advantage of working with Baire space is that it is zero-dimensional and therefore, we can easily identify 
subsets of $\mathbb{R}^n$ and even $\mathbb{R}^\omega$ with elements of $\mathbb{R}$ via the Cantor pairing function $\ulcorner.,.\urcorner:\omega\times\omega\to\omega$, e.g., by encoding a sequence $a := \langle a_n:n\in\omega \rangle$ by 
$\widehat{a}(\ulcorner k,\ell\urcorner) := a_k(\ell)$.
Similarly, if $A = \{A_n:n\in\omega\}$ is a family of sets of reals indexed by natural numbers, then $\widehat{A} :=
\{nx:x\in A_n\}$ is a set of reals that contains exactly the same information.
If $A\subseteq\mathbb{R}^2$, we can form its \emph{projection} $\mathsf{proj}(A) := \{x\in\mathbb{R}:\exists y(
(x,y)\in A)\}$.

Since we are working in models without $\AC$, we have to be very specific about our definitions.
As mentioned, in pathological models of $\mathsf{ZF}$, very basic features of descriptive set theory can fail to hold
(e.g., that there are sets which are not Borel). The theory of Borel sets without $\AC$
is complicated: different standard definitions of the Borel hierarchy that are equivalent in $\mathsf{ZF}+\mathsf{AC}_\omega(\mathbb{R})$ fail to be equivalent in general; also, the length of the Borel hierarchy is not determined by $\mathsf{ZF}$ \cite[Theorem 1.2 \& 1.3]{miller2008long}. As a consequence, we
focus on projective descriptive classes in this paper.

A subset $T\subseteq\omega^{<\omega}$ is called a \emph{tree} if it is closed under taking initial segments; if $r$ is a real, a tree is called \emph{computable in $r$} if there is a Turing machine that computes its characteristic function from the oracle $r$; it is called \emph{computable} if it is computable without an oracle. If $T$ is a tree, we define the \emph{set of branches of $T$} to be $[T] := \{x\in\R: \forall n$,  $x{\upharpoonright}n\in T\}$. A set of reals is $\Pi^0_1(r)$ if it is the set of branches of a tree computable in $r$. We define the \emph{lightface projective hierarchy} by recursion: we define $\Pi^1_0(r) := \Pi^0_1(r)$ and say that
a set is $\Sigma^1_{n+1}(r)$ if it is the projection of a $\Pi^1_n(r)$ set and it is 
$\Pi^1_n(r)$ if it is the complement of a $\Sigma^1_n(r)$ set. If $r$ is a computable real, we write $\Sigma^1_n$ and $\Pi^1_n$ for $\Sigma^1_n(r)$ and $\Pi^1_n(r)$, respectively.
The \emph{boldface projective hierarchy} are defined as usual by
$\boldsymbol{\Sigma}^1_n := \bigcup_{r\in\mathbb{R}} \Sigma^1_n(r)$
and
$\boldsymbol{\Pi}^1_n := \bigcup_{r\in\mathbb{R}} \Pi^1_n(r)$; we say that sets in $\boldsymbol{\Sigma}^1_1$ are \emph{analytic}.
We furthermore let $\Delta^1_n(r) := \Sigma^1_n(r)\cap \Pi^1_n(r)$ and 
$\boldsymbol{\Delta}^1_n := \boldsymbol{\Sigma}^1_n\cap\boldsymbol{\Pi}^1_n$. A set is called \emph{projective} if it is in one of the boldface projective classes; we denote the set of projective sets by $\mathbf{Proj}$.

A countable set of reals $C = \{a_n:n\in\omega\}$ is easily seen to be $\Sigma^1_1(\widehat{a})$ by considering the tree
$T := \{(s,nt):t\subseteq a_n\mbox{ and $s$ and $t$ are compatible}\}$ and observing that $C = \mathrm{p}[T]$. 
Not all countable sets are lightface $\Sigma^1_1$.
If we denote the set of countable sets of reals by $\mathbf{Ctbl}$, we therefore obtain the following chain of inclusions:
$$\mathbf{Ctbl} \subseteq \boldsymbol{\Sigma}^1_1 \subseteq ...
\boldsymbol{\Sigma}^1_n \subseteq 
\boldsymbol{\Sigma}^1_{n+1} \subseteq ...
\subseteq\mathbf{Proj}.$$
If we define $\mathbf{F}_\sigma$ as the collection of countable unions of closed sets, then 
$\mathbf{Ctbl}\subseteq \mathbf{F}_\sigma$. It is known that in $\mathsf{ZF}$,
$\mathbf{F}_\sigma \subseteq \boldsymbol{\Sigma}^1_1$ \cite[562D \& 562F]{fremlin2000measure}.

If $\Gamma$ is any of the descriptive classes defined above, we say that a 
countable family $A = \{A_n:n\in\omega\}$ of sets of reals is in $\Gamma$ if each of the elements of the family is in $\Gamma$. We say that it is \emph{uniformly in $\Gamma$} if it is in $\Gamma$ and furthermore $\widehat{A}$ is also in $\Gamma$. We write $\mathrm{unif}\Gamma$ for the collection of countable families of sets of reals that are uniformly in $\Gamma$.

We can now stratify the axiom $\mathsf{AC}_\omega(\mathbb{R})$ according to the descriptive classes that we have considered: let $\Xi$ be any definable class of countable collections of sets of reals, e.g., one of our descriptive classes $\Gamma$ or one of the uniform versions $\mathrm{unif}\Gamma$. Then we write
$$\mathrm{AC}_\omega(\mathbb{R};\Xi)$$
for the statement ``every countable collection of non-empty sets of reals in $\Xi$ has a choice function''. Note that
$\mathsf{AC}_\omega(\mathbb{R};\mathbf{Ctbl})$ is the same principle that is sometimes known as
$\mathsf{CAC}_\omega(\mathbb{R})$ in the literature. Clearly, if $\Xi'\supseteq \Xi$, then
$\mathrm{AC}_\omega(\mathbb{R};\Xi')$ implies
$\mathrm{AC}_\omega(\mathbb{R};\Xi)$, so our inclusions of descriptive classes naturally give an
implication diagram for the axiom fragments as displayed in Figure \ref{fig:implications}. The diagram forms a
rectangular solid where the back side consists of the boldface principles, the front side of the lightface principles, the left side of the non-uniform principles, and the right side of the uniform principles. We shall state the main applications of our main theorem in terms of this diagram below.

Before doing so, we introduce the final axiom family: for any of our descriptive classes $\Gamma$, we write 
$\mathsf{DC}(\mathbb{R};\Gamma)$ for the \emph{Axiom of Dependent Choice for $\Gamma$ relations}, i.e.\ for any non-empty set $X$ of reals in $\Gamma$ and any total relation $R\subseteq X\times X$ (i.e.\ for all $x$ there is a $y$ such that $x\mathrel{R}y$) that is in $\Gamma$, there is a sequence
$\langle x_i : i \in  \omega \rangle \in X^\omega$ such that for all $i\in \omega$, we have $x_i\mathrel{R}x_{i+1}$.\footnote{Kanovei's definition of $\mathsf{DC}(\mathbb{R};\Gamma)$ is slightly different. However, he showed in \cite{kanovei1979descriptive} that Lemma \ref{lem:1} and Theorem \ref{thm:kanovei} are also true for our definition of $\mathsf{DC}(\mathbb{R};\Gamma)$.}

\begin{lemma}[Kanovei, \cite{kanovei1979descriptive}]\label{lem:1}
If $\Gamma$ is one of the classes $\boldsymbol{\Pi}^1_n$ or $\Pi^1_n$, then 
$\mathsf{DC}(\mathbb{R};\Gamma)$ implies $\mathsf{AC}_\omega(\mathbb{R};\mathrm{unif}\Gamma)$.
\end{lemma}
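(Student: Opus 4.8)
The plan is to reduce an instance of $\mathsf{AC}_\omega(\mathbb{R};\mathrm{unif}\Gamma)$ to an instance of $\mathsf{DC}(\mathbb{R};\Gamma)$. So suppose $A = \{A_n : n \in \omega\}$ is a countable family of non-empty sets of reals which is uniformly in $\Gamma$, meaning in particular that $\widehat{A} = \{nx : x \in A_n\}$ is in $\Gamma$. We want a choice function, i.e.\ a sequence $\langle x_n : n \in \omega\rangle$ with $x_n \in A_n$ for all $n$. The idea is to build a tree of ``approximations'': a partial choice function is a finite sequence $\langle x_0,\dots,x_{k-1}\rangle$ with $x_i \in A_i$; the relation $R$ should extend such an approximation by one more coordinate, and any $R$-path through the approximations (started correctly) yields, in the limit, a full choice function after reindexing.

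First I would set $X$ to be the set of codes for finite approximations: using the pairing apparatus from the excerpt, a finite sequence $\langle x_0,\dots,x_{k-1}\rangle$ of reals can be coded by a single real $p$, and the condition ``$p$ codes a sequence of length $k$ whose $i$-th entry lies in $A_i$'' is definable from $\widehat{A}$; since $A \in \mathrm{unif}\Gamma$ and $\Gamma \in \{\boldsymbol{\Pi}^1_n, \Pi^1_n\}$ is closed under the relevant bounded quantification and recombination, $X$ is in $\Gamma$ (here one uses that membership $x \in A_i$ is read off from $\widehat{A}$ uniformly, and a finite conjunction over $i < k$ stays in $\Gamma$). Next I would define $R$ on $X$ by: $p \mathrel{R} q$ iff $q$ codes a one-step extension of the sequence coded by $p$, i.e.\ $\mathrm{lh}(q) = \mathrm{lh}(p)+1$ and $q$ restricted to its first $\mathrm{lh}(p)$ coordinates equals $p$. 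Again this is a $\Gamma$ relation (it only adds a bounded block of quantifiers to the defining formula of $X$). Totality of $R$ on $X$ is exactly the statement that every $A_i$ is non-empty: given an approximation of length $k$, pick any element of $A_k$ — but picking ``any'' element is allowed here because $\mathsf{DC}$ only requires the relation to be total, not that we can uniformly choose; so $R$ is total on $X$. I also need $X$ non-empty, witnessed by the code of the empty sequence (or, to be safe, I start the path from a single correctly-chosen first coordinate, which exists as $A_0 \neq \emptyset$).

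Then $\mathsf{DC}(\mathbb{R};\Gamma)$ applied to $(X,R)$ yields a sequence $\langle p_i : i \in \omega\rangle$ with $p_i \mathrel{R} p_{i+1}$ for all $i$. By induction $\mathrm{lh}(p_i)$ is strictly increasing, so (discarding a finite initial segment or passing to $p_i$ with $\mathrm{lh}(p_i) \ge 1$) the sequences $p_i$ cohere and their union is an $\omega$-sequence $\langle x_n : n \in \omega\rangle$ with $x_n \in A_n$ for every $n$ — but to be careful, $\mathrm{lh}(p_i)$ need not be exactly $i$; it is increasing, hence cofinal in $\omega$, so the coherent union is still defined on all of $\omega$. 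This is the desired choice function, completing the proof.

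The main obstacle I expect is the bookkeeping to confirm that $X$ and $R$ genuinely land in $\Gamma$ rather than merely in some larger class: one has to check that reading ``$x_i \in A_i$'' off $\widehat{A}$, conjoining over the bounded range $i < k$, and coding/decoding finite sequences of reals, all preserve $\Pi^1_n$ (resp.\ $\boldsymbol{\Pi}^1_n$). This is where the hypothesis ``uniformly in $\Gamma$'' is essential — without it, $\widehat{A}$ could be more complex than the individual $A_n$, and the finite conjunction trick would fail — and it is presumably also why the lemma is stated for the $\Pi^1_n$ classes, which (unlike the $\Sigma^1_n$ classes) are closed under the universal-type manipulations that naturally arise, although here the conjunction is finite so the argument is fairly robust; the one genuinely delicate point is ensuring no unbounded existential quantifier sneaks in when expressing ``$q$ is a one-step extension of $p$ with new entry in $A_{\mathrm{lh}(p)}$''.
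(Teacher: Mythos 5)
Your argument is correct and is the standard reduction (partial choice functions ordered by one-step extension, with the uniformity hypothesis guaranteeing that the set of approximations and the extension relation stay in $\Gamma$, using closure of $\Pi^1_n$ under recursive substitution and number quantification). The paper does not prove this lemma itself but cites Kanovei, and your proof is essentially the argument one finds there, so there is nothing to contrast.
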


We can now formulate Kanovei's separation result from the introduction:

\begin{theorem}[Kanovei, \cite{kanovei1979descriptive}]\label{thm:kanovei}\label{theorem main theorem kanovei}
For each $n\geq 1$, there is a model of $\mathsf{ZF}+\mathsf{DC}(\mathbb{R};\boldsymbol{\Pi}^1_n)+
\neg\mathsf{AC}_\omega(\mathbb{R};\mathrm{unif}\Pi^1_{n+1})$.
\end{theorem}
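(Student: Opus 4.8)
The plan is to realise the required model as a symmetric submodel of a forcing extension of $L$, where the forcing is (a single packaging of) the $\omega\times\omega$-fold product of a level-$(n{+}1)$ generalisation of Jensen's forcing --- an \emph{$n$-slicing forcing notion} in the sense of \S\,3. Plain Cohen or Sacks forcing will not suffice: we need the reals that are added to be \emph{lightface} $\Pi^1_{n+1}$-definable over $L$, so that the sets witnessing the failure of choice are uniformly $\Pi^1_{n+1}$, while at the same time the collection of generic reals must be thin and satisfy a strong minimality property, so that $\boldsymbol{\Pi}^1_n$-absoluteness is not disturbed. Jensen's $\Diamond$-recursion is exactly the device that delivers both features at once, and the content of \S\,4 is to push it up to the $(n{+}1)$-st projective level.

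Concretely, the first step is to build, inside $L$ by a simultaneous $\Diamond$-guided recursion, a family $\langle \mathbb{Q}_{k,j} : (k,j)\in\omega\times\omega\rangle$ of Jensen-style subforcings of a perfect-tree forcing, with $\mathbb{Q}_{k,j}$ a copy of a forcing $\mathbb{P}_k$ depending only on $k$, such that: (a) $\langle\mathbb{P}_k:k\in\omega\rangle$ is uniformly definable from $k$ at a low level of the hierarchy, so that $A_k:=\{x\in\R: x \text{ is } \mathbb{P}_k\text{-generic over } L\}$ is $\Pi^1_{n+1}$ \emph{uniformly in the parameter $k$}; (b) every finite subproduct, hence the full finite-support product $\mathbb{P}$, is ccc (the ``Jensen property'' of the construction), so that $\omega_1$ is preserved; and (c) a strong minimality property holds, namely that in the product extension the only $\mathbb{P}_k$-generic reals over $L$ are those explicitly added in the coordinates copying $\mathbb{P}_k$. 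One then forces over $L$ with $\mathbb{P}$, obtaining mutually generic reals $c_{k,j}$, and passes to the symmetric submodel $N$ given by the group $\mathcal{G}$ of permutations of $\omega\times\omega$ fixing first coordinates and moving only finitely many ``columns'' $\{k\}\times\omega$, with the filter generated by the pointwise stabilisers of finitely many columns. This $N$ is the desired model, and $\mathsf{ZF}$ holds there automatically.

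For $\neg\mathsf{AC}_\omega(\mathbb{R};\mathrm{unif}\Pi^1_{n+1})$ the witnessing family is $\langle A_k:k\in\omega\rangle$. It is genuinely an element of $N$: the canonical name of the sequence is fixed by all of $\mathcal{G}$, and the name of each $A_k$ by the stabiliser of column $k$. Each $A_k$ is non-empty in $N$ (it contains $c_{k,0}$), and by (c) its members in $N$ are precisely the infinitely many distinct reals $c_{k,j}$. By (a) --- and here it is essential that $L$ and each $\mathbb{P}_k$ are lightface-definable, so that ``$x$ is $\mathbb{P}_k$-generic over $L$'' needs only $k$ as a parameter --- both every $A_k$ and the set $\widehat{A}=\{kx:x\in A_k\}$ are $\Pi^1_{n+1}$, so the family is uniformly $\Pi^1_{n+1}$. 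Finally there is no choice function for it in $N$: a name $\dot f$ for one would be fixed by the stabiliser of some finite set $F$ of columns; fixing a condition that decides enough of $\dot f$ and choosing $k\notin F$, a permutation internal to column $k$ that moves the generic value $\dot f$ takes at $k$ while fixing both the condition and $\dot f$ yields the usual contradiction.

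For $\mathsf{DC}(\mathbb{R};\boldsymbol{\Pi}^1_n)$, suppose $X\in N$ is a non-empty $\boldsymbol{\Pi}^1_n$ set and $R\subseteq X\times X$ a total $\boldsymbol{\Pi}^1_n$ relation, both defined with a real parameter $p\in N$; then $p$, having finite symmetric support, lies in $L[p]$ and in a sub-extension $L[G{\restriction}E]$ by finitely many columns. One builds an $R$-chain $x_0\mathrel{R}x_1\mathrel{R}\cdots$ recursively, choosing at each step the canonically least available witness, and the resulting chain is definable from $p$ alone, hence lies in $L[p]\subseteq N$. The substance of the argument --- and, together with (b) and (c), the main obstacle --- is a $\boldsymbol{\Sigma}^1_n$-absoluteness lemma tailored to the construction: $\boldsymbol{\Sigma}^1_n$ statements with parameters in $N$ must have the same truth value in $N$, in the sub-extensions $L[G{\restriction}E]$, and in $L[p]$. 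This is what guarantees that non-emptiness of $X$ and totality of $R$ descend so that a witness always exists, that each chosen witness genuinely satisfies the $\boldsymbol{\Pi}^1_n$ relation back in $N$, and hence that the construction produces a legitimate infinite $R$-chain in $N$. For the base cases $n=1,2$ this absoluteness is classical (Mostowski and Shoenfield absoluteness, together with $\omega_1$-preservation); for $n>2$ it is precisely the property that the $n$-slicing construction is engineered to secure, and establishing it --- in tandem with the ccc-ness and the minimality of the $\omega\times\omega$-fold product --- is the hard technical core that the delicate $\Diamond$-recursion of \S\,4 exists to handle.
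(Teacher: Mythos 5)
Your overall architecture -- a product of a level-$(n{+}1)$ generalisation of Jensen forcing over $L$, a symmetric submodel killing choice for the columns of generic reals, uniqueness of generics to make the witnessing family $\mathrm{unif}\Pi^1_{n+1}$, and a $\Sigma^1_n$-absoluteness lemma between the sub-extensions and the full extension to salvage $\mathsf{DC}(\R;\boldsymbol{\Pi}^1_n)$ -- is exactly the paper's strategy (the paper obtains Theorem \ref{thm:kanovei} as a special case of the stronger Theorem \ref{thm:main}, proved in \S\,3--4). But there is one structural choice where your version diverges in a way that breaks the crucial step. You take the product to be indexed by $\omega\times\omega$ and let the symmetric names (in particular the parameter $p$ of the $\boldsymbol{\Pi}^1_n$ set and relation) live in sub-extensions $L[G{\upharpoonright}E]$ for \emph{finitely many columns} $E$. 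The absoluteness you then need -- $\Sigma^1_n$ statements with parameters in $L[G{\upharpoonright}E]$ having the same truth value in $L[G{\upharpoonright}E]$, $N$, and $L[G]$ -- is precisely what the machinery of \S\,4 does \emph{not} deliver for such small sub-products. The Kanovei--Lyubetsky Lemma (Lemma \ref{theorem aabsoluteness}) and the derived notion of being ``$n$-absolute for slices'' apply only to restrictions $G{\upharpoonright}e$ with $e$ \emph{unbounded in $\omega_1$}: the $n$-completeness of the underlying $\Delta^{\HC}_{n-1}$ sequence decides the relevant definability facts at cofinally many stages $\xi<\omega_1$, and one needs coordinates of $e$ beyond each such stage to reflect $\Sigma^1_n$ witnesses from $L[G]$ down into $L[G{\upharpoonright}e]$. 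A finite (or even countable bounded) set of columns sees none of these stages, and for $n>2$ there is no reason for $\Sigma^1_n$ truth to survive the addition of the remaining generics. This is exactly why the paper replaces the classical $\omega\times\omega$ product by an $\omega_1\times\omega$ product and takes as its slices the sets $(F\cup(\omega_1\setminus\omega))\times\omega$ with $F\subseteq\omega$ finite: every slice contains the entire uncountable tail, hence is unbounded, hence the Sandwiching Lemma applies. Note also that this is already forced on you by your own terminology: an ``$n$-slicing forcing notion in the sense of \S\,3'' is by definition an $\omega$-slice product of length $\omega_1$. As written, your deferral of the absoluteness to ``the property that the $n$-slicing construction is engineered to secure'' silently assumes the very feature (the extra $\omega_1$ rows) that your set-up omits.

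Two smaller points. First, the paper deliberately avoids the $\diamondsuit$-recursion you invoke, instead running Jensen's original hand construction through the poset $(\mathcal{M}_{\mathcal{J}},\preccurlyeq)$ of Jensen-sequences, because the $\diamondsuit$ formulation does not generalise cleanly to securing $\Sigma^{\HC}_{n-2}$-completeness; this is cosmetic but worth knowing. Second, your device of using genuinely different forcings $\mathbb{P}_k$ per column to make $\widehat{A}$ lightface $\Pi^1_{n+1}$ is a legitimate variant of the paper's cleaner trick (a single $n$-Jensen forcing $\pz^\ast$ restricted on column $\ell$ to trees with stem beginning $\ell+1$, so that $x(0)$ tracks the column and $\widehat{A}=\{(\ell,x): x(0)=\ell+1\wedge x\in B\}$ for $B$ the $\Pi^1_n$ set of all $\pz^\ast$-generics); your version additionally owes a uniform-in-$k$ definability and a cross-column uniqueness argument, both of which the stem trick gets for free. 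Your failure-of-choice and $\mathsf{DC}$ arguments are otherwise the paper's, modulo the absoluteness gap above.
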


Our main theorem is the following separation of the principles, which strengthens Kanovei's Theorem \ref{thm:kanovei}.

\begin{theorem}\label{theorem main theorem countable}\label{thm:main}
For every $n\geq 1$, there is a model of $\mathsf{ZF}+\mathsf{DC}(\mathbb{R};\boldsymbol{\Pi}^1_n)+
\neg\mathsf{AC}_\omega(\mathbb{R};\mathrm{unif}\Pi^1_{n+1})+\neg\mathsf{AC}_\omega(\mathbb{R};\mathbf{Ctbl})$.
\end{theorem}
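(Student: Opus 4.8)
The plan is to combine Kanovei's strategy, which produces a model where $\mathsf{DC}(\mathbb{R};\boldsymbol{\Pi}^1_n)$ holds but $\mathsf{AC}_\omega(\mathbb{R};\mathrm{unif}\Pi^1_{n+1})$ fails, with an additional failure of $\mathsf{AC}_\omega(\mathbb{R};\mathbf{Ctbl})$, all inside a single model obtained from an $n$-slicing forcing notion as promised in \S\,3--4. First I would force with a countable-support (or finite-support, depending on the chain-condition analysis of the slicing forcing) product or iteration $\mathbb{P}$ of length $\omega$ of the $n$-slicing forcing notion, or rather the relevant symmetric variant, over a model of $\ZFC$ (or of $\ZFC^-$ applied inside $\HC$ and then transferred). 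The $n$-slicing property is exactly what guarantees that the generic reals added at the ``slices'' cannot be distinguished by $\boldsymbol{\Pi}^1_n$ formulas but are separated at the level $\Pi^1_{n+1}$; this is what drives the failure of $\mathsf{AC}_\omega(\mathbb{R};\mathrm{unif}\Pi^1_{n+1})$ in the symmetric submodel, while the homogeneity/Ramsey-type properties of the forcing give $\mathsf{DC}(\mathbb{R};\boldsymbol{\Pi}^1_n)$.

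The new ingredient is the simultaneous failure of $\mathsf{AC}_\omega(\mathbb{R};\mathbf{Ctbl})$. I would arrange the symmetric model so that the sequence of ``slices'' $\langle A_k : k\in\omega\rangle$, where $A_k$ is the (countable, in fact finite or countable) set of generic reals associated with the $k$-th coordinate, is itself a sequence of countable sets of reals with no choice function in the symmetric submodel. Concretely: each $A_k$ should be a countable set of reals that exists in the model (so the family is a genuine element of $\mathbf{Ctbl}^\omega$), but any choice function $f$ with $f(k)\in A_k$ would, by a symmetry/genericity argument, have to be ``unsupported'' and hence not lie in the symmetric model. This is the standard Cohen-style argument for $\neg\mathsf{AC}_\omega(\mathbb{R})$ adapted to the slicing forcing: one shows that a name for such an $f$ has a finite support, picks a coordinate $k$ outside the support, and uses an automorphism of $\mathbb{P}$ fixing the support but moving the generic reals within $A_k$ to derive a contradiction. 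One must check that the automorphism group witnessing the symmetric model for Kanovei's argument is rich enough to also act transitively (or at least without fixed points on the relevant names) within each $A_k$; if the $n$-slicing forcing of \S\,4 is built, as Jensen forcing is, with enough internal automorphisms permuting the generic objects at each level, this should go through.

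The key steps, in order: (1) fix the $n$-slicing forcing $\mathbb{Q}$ from \S\,4 and its automorphism group, and form the $\omega$-length symmetric extension $(\mathbb{P},\mathcal{G},\mathcal{F})$ with the finite-support symmetric ideal; (2) verify, by the arguments underlying Lemma \ref{lem:1} and Theorem \ref{thm:kanovei}, that $\mathsf{DC}(\mathbb{R};\boldsymbol{\Pi}^1_n)$ holds in the symmetric model $V(\mathcal{G})$ --- this uses that $\boldsymbol{\Pi}^1_n$ statements about reals in $V(\mathcal{G})$ are decided on a ``$\boldsymbol{\Pi}^1_n$-absolute'' sub-poset with the requisite homogeneity; (3) verify that the canonical name for $\widehat{A}$, where $A=\langle A_k\rangle$, is in $\mathrm{unif}\Pi^1_{n+1}$ but the family has no choice function, giving $\neg\mathsf{AC}_\omega(\mathbb{R};\mathrm{unif}\Pi^1_{n+1})$; (4) observe that since each $A_k$ is a countable set of reals, the same family witnesses $\neg\mathsf{AC}_\omega(\mathbb{R};\mathbf{Ctbl})$ --- provided we have verified in step (3) that there is genuinely no choice function, not merely no projectively definable one. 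The main obstacle I anticipate is step (4) dovetailing with step (2): the symmetric model must be lean enough that the slice-choosing function is absent (failure of $\mathbf{Ctbl}$-choice), yet rich enough that $\mathsf{DC}(\mathbb{R};\boldsymbol{\Pi}^1_n)$ survives; reconciling these requires choosing the symmetric ideal and the automorphism group with care, and checking that the $n$-slicing forcing's definability features (the reason the slices are $\Pi^1_{n+1}$ but not lower) are not accidentally destroyed or strengthened by passing to the product. I would handle this by using essentially the same symmetric system as Kanovei but noting that his argument already yields the absence of \emph{any} choice function for the slice family (his $\mathbf{Ctbl}$ family is the same as the $\mathrm{unif}\Pi^1_{n+1}$ family), so the extra conjunct $\neg\mathsf{AC}_\omega(\mathbb{R};\mathbf{Ctbl})$ comes essentially for free once one records that the witnessing sets are countable --- the real content of the strengthening over Theorem \ref{thm:kanovei} being the explicit countability, which the slicing construction of \S\,4 must be set up to guarantee.
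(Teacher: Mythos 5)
Your overall strategy matches the paper's: a symmetric extension built over a slicing-forcing product in which a single family $A=\{A_\ell:\ell\in\omega\}$ of sets of generic reals simultaneously witnesses $\neg\mathsf{AC}_\omega(\mathbb{R};\mathrm{unif}\Pi^1_{n+1})$ and $\neg\mathsf{AC}_\omega(\mathbb{R};\mathbf{Ctbl})$, with choice functions killed by coordinate-swapping automorphisms. But there are genuine gaps. First, the architecture: you propose to take an $\omega$-length product of the slicing forcing, whereas an $(n+1)$-slicing forcing notion already \emph{is} an $\omega$-slice product of length $\omega_1$; the paper simply forces with one such $\pz$ over $L$ and passes to $N=L(G,Z)$ where $Z=\{(F\cup(\omega_1\setminus\omega))\times\omega : F\subseteq\omega\text{ finite}\}$. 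The length $\omega_1$ is not a cosmetic choice: the Kanovei--Lyubetsky absoluteness (Lemma~\ref{theorem aabsoluteness}) that powers both the preservation of the $\Pi^1_{n+1}$ definition of $\widehat{A}$ in $N$ (Lemma~\ref{lem: complexity A}) and the proof of $\mathsf{DC}$ requires each slice to be \emph{unbounded in $\omega_1$}, which is arranged by giving every slice the untouched tail $(\omega_1\setminus\omega)\times\omega$. An $\omega\times\omega$ grid has no such tail, and the absoluteness argument does not go through. Relatedly, your mechanism for $\mathsf{DC}(\mathbb{R};\boldsymbol{\Pi}^1_n)$ is ``homogeneity/Ramsey-type properties,'' but Proposition~\ref{proposition DC holds in N} uses no homogeneity: every real of $N$ lies in some $L[G{\upharpoonright}z]$ with $z\in Z$ (Lemma~\ref{lemma sets of ordinals in N}), totality of the relation is a $\Sigma^1_{n+1}$ statement that reflects down to $L[G{\upharpoonright}z]$ by slice-absoluteness (Lemma~\ref{lemma n-absoluteness}), the DC-sequence is produced there using full $\ZFC$, and it is pushed back into $N$ by the same absoluteness.

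Second, the countability of each $A_\ell$ \emph{in $N$} is precisely the content of the strengthening over Theorem~\ref{thm:kanovei}, and it is not ``essentially for free'': for the family to violate $\mathsf{AC}_\omega(\mathbb{R};\mathbf{Ctbl})$ in $N$, the model $N$ itself must contain an enumeration of each $A_\ell$. This holds only because the filter is generated by the stabilizers $H_z$ which fix the entire row $\{\ell\}\times\omega$ pointwise for $\ell$ in the finite part of $z$, so the enumeration of $A_\ell$ coming from $G{\upharpoonright}z$ is hereditarily symmetric and lands in $N$. You correctly name the resulting tension --- rows must be fixed pointwise to keep the enumerations, yet there must be automorphisms moving within some untouched row to kill any candidate choice function --- but you leave its resolution to ``choosing the symmetric ideal and the automorphism group with care.'' The resolution is the specific $Z$ above: any $f\in N$ is stabilized by some $H_z$ whose finite part misses a row $\ell<\omega$, and a swap of $(\ell,k)$ with $(\ell,k')$ lies in $H_z$, giving the contradiction, while for each fixed $\ell$ the slice with $F=\{\ell\}$ certifies countability of $A_\ell$. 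Without pinning down this filter, the claim that the $\mathbf{Ctbl}$ conjunct ``comes for free'' is unjustified; indeed, if it were free, Kanovei's original theorem would already have included it.
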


We can represent this pictorially, with double arrows representing implications, and dashed lines representing the separation between layers due to Theorem \ref{thm:main}.

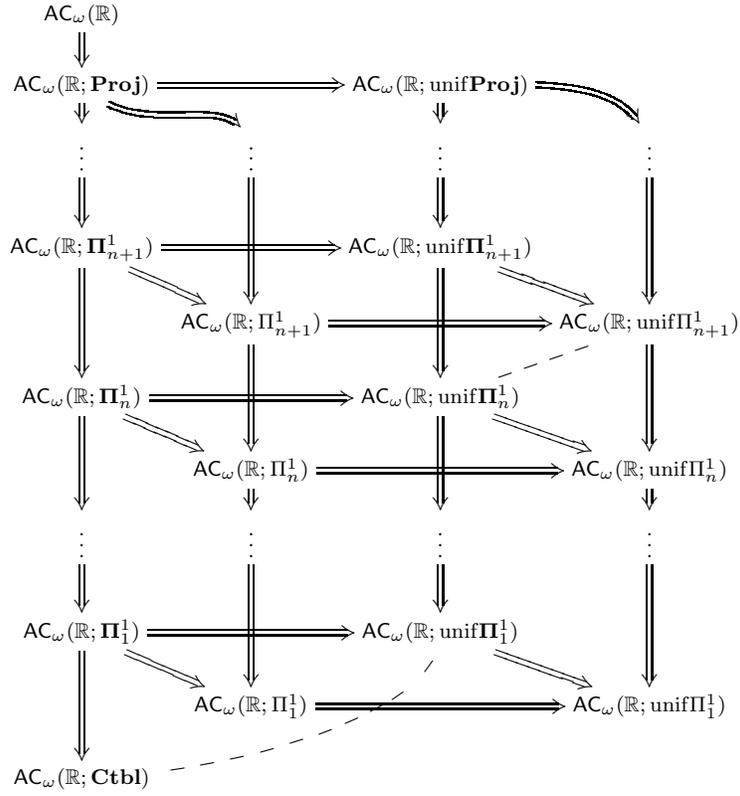
\begin{figure}{
{\scriptsize$$\xymatrix@C=0.5em@R=1.5em{
\mathsf{AC}_\omega(\mathbb{R}) \ar@{=>}[d]\\
\mathsf{AC}_\omega(\mathbb{R};\mathbf{Proj}) \ar@{=>}[rr]\ar@{=>}[d]\ar@{=>}@(dr,u)[rd] & &
\mathsf{AC}_\omega(\mathbb{R};\mathrm{unif}\mathbf{Proj}) \ar@{=>}[d]\ar@{=>}@(r,u)[dr] \\
\vdots\ar@{=>}[d] & \vdots\ar@{=>}[dd] & \vdots\ar@{=>}[d]& \vdots\ar@{=>}[dd]\\
\mathsf{AC}_\omega(\mathbb{R};\boldsymbol{\Pi}^1_{n+1}) \ar@{=>}[rr]\ar@{=>}[dd]\ar@{=>}[dr] & 
&
\mathsf{AC}_\omega(\mathbb{R};\mathrm{unif}\boldsymbol{\Pi}^1_{n+1}) \ar@{=>}[dr]\ar@{=>}[dd] &
\\
& 
\mathsf{AC}_\omega(\mathbb{R};\Pi^1_{n+1}) \ar@{=>}[rr]\ar@{=>}[dd] &
& 
\mathsf{AC}_\omega(\mathbb{R};\mathrm{unif}\Pi^1_{n+1}) \ar@{=>}[dd] \\
\mathsf{AC}_\omega(\mathbb{R};\boldsymbol{\Pi}^1_{n}) \ar@{=>}[rr]\ar@{=>}[dd]\ar@{=>}[dr] &
&
\mathsf{AC}_\omega(\mathbb{R};\mathrm{unif}\boldsymbol{\Pi}^1_{n}) \ar@{=>}[dr]\ar@{=>}[dd]\ar@{--}[ur]
 &
\\
&
\mathsf{AC}_\omega(\mathbb{R};\Pi^1_{n}) \ar@{=>}[rr]\ar@{=>}[d] &
&
\mathsf{AC}_\omega(\mathbb{R};\mathrm{unif}\Pi^1_{n}) \ar@{=>}[d] \\
\vdots\ar@{=>}[d] & \vdots\ar@{=>}[dd] & \vdots\ar@{=>}[d]& \vdots\ar@{=>}[dd]\\
\mathsf{AC}_\omega(\mathbb{R};\boldsymbol{\Pi}^1_{1}) \ar@{=>}[rr]\ar@{=>}[dd]\ar@{=>}[dr] &
&
\mathsf{AC}_\omega(\mathbb{R};\mathrm{unif}\boldsymbol{\Pi}^1_{1}) \ar@{=>}[dr] &
\\
&
\mathsf{AC}_\omega(\mathbb{R};\Pi^1_{1}) \ar@{=>}[rr] &
&
\mathsf{AC}_\omega(\mathbb{R};\mathrm{unif}\Pi^1_{1}) \\
\mathsf{AC}_\omega(\mathbb{R};\mathbf{Ctbl})\ar@{--}@(r,d)[rruu]&  &\\
}$$}}
\caption{Implication diagram of the fragments of $\mathsf{AC}_\omega(\mathbb{R})$ \label{fig:implications}}
\end{figure}

\subsection{Consequences of our main theorem}

We can now separate some of the sides of the solid quadrilateral in our Figure \ref{fig:implications}
by applying our Theorem \ref{thm:main}.

\begin{corollary}\label{cor:1}
There is a model of $\mathsf{ZF}+\mathsf{DC}(\mathbb{R};\mathbf{Proj})+
\neg\mathsf{AC}_\omega(\mathbb{R};\mathbf{Ctbl})$.
\end{corollary}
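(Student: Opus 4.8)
The plan is to derive Corollary \ref{cor:1} directly from the Main Theorem \ref{thm:main} by a compactness-style union construction. First, I would observe that for each $n\geq 1$, Theorem \ref{thm:main} produces a model $M_n$ of $\mathsf{ZF}$ in which $\mathsf{DC}(\mathbb{R};\boldsymbol{\Pi}^1_n)$ holds but $\mathsf{AC}_\omega(\mathbb{R};\mathbf{Ctbl})$ fails; however, a single $M_n$ only witnesses dependent choice for $\boldsymbol{\Pi}^1_n$ relations, not for all projective relations, so one model off the shelf is not enough. Instead, I would inspect the construction underlying Theorem \ref{thm:main}: it is obtained by iterating (or taking a product of) $n$-slicing forcing notions over a ground model (a generalisation of Jensen's construction of a $\boldsymbol{\Pi}^1_2$ singleton forcing), and the number $n$ enters only as the length/complexity parameter of that iteration. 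The natural move is to run the construction for \emph{all} $n$ simultaneously — i.e., take a single symmetric extension that, level by level, adds the slicing generics for every $n\in\omega$ — so that in the resulting model $M$ one has $\mathsf{DC}(\mathbb{R};\boldsymbol{\Pi}^1_n)$ for every $n$ at once, hence $\mathsf{DC}(\mathbb{R};\mathbf{Proj})$, while the obstruction to countable choice for countable families (an $\omega$-sequence of finite/countable sets with no choice function, coming from the symmetric kernel) survives because it already survives at the bottom level.

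The key steps, in order, would be: (1) recall that $\mathsf{DC}(\mathbb{R};\mathbf{Proj})$ is equivalent to the conjunction $\bigwedge_{n\in\omega}\mathsf{DC}(\mathbb{R};\boldsymbol{\Pi}^1_n)$, since every projective relation is $\boldsymbol{\Pi}^1_n$ for some $n$ and $\boldsymbol{\Pi}^1_n\subseteq\boldsymbol{\Pi}^1_{n+1}$; (2) set up the iterated/product symmetric forcing that, for each $n$, contributes the slicing forcing responsible for validating $\mathsf{DC}(\mathbb{R};\boldsymbol{\Pi}^1_n)$ while keeping the extension below the $(n{+}1)$-st projective level "generic enough" — concretely, choosing the slicing forcings so their iteration is still suitably definable and homogeneous; (3) verify that the symmetric submodel $M$ of this extension still satisfies $\mathsf{ZF}$ and each $\mathsf{DC}(\mathbb{R};\boldsymbol{\Pi}^1_n)$, using that the forcing beyond stage $n$ does not add $\boldsymbol{\Pi}^1_n$-new relations that could break dependent choice at that level (the slicing property is precisely designed to give this); and (4) check that the failure of $\mathsf{AC}_\omega(\mathbb{R};\mathbf{Ctbl})$ is preserved — the canonical counterexample, a sequence of sets of mutually generic reals with no choice function in the symmetric kernel, is unchanged, and is a \emph{countable} family of countable (indeed finite) non-empty sets of reals.

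I expect the main obstacle to be step (2)–(3): ensuring that doing the construction for all $n$ at once does not cause the higher-$n$ forcings to interfere with the lower levels of the projective hierarchy, i.e.\ that $\mathsf{DC}(\mathbb{R};\boldsymbol{\Pi}^1_n)$ genuinely holds in the \emph{final} model and not merely in an intermediate one. This is where the definability bookkeeping of the slicing forcings matters — one needs the tail of the iteration, relative to each finite stage, to still have the slicing/homogeneity properties that Theorem \ref{thm:main} exploits, so that absoluteness arguments for $\boldsymbol{\Pi}^1_n$ statements go through uniformly. If, as is likely, the construction in \S 4 already produces the models $M_n$ as a coherent family (e.g.\ as initial segments of one long iteration, or via a single forcing with a parameter $n$ that can be taken to be $\omega$), then the corollary is essentially immediate and the "proof" reduces to citing Theorem \ref{thm:main} and remarking that $\mathbf{Ctbl}\not\subseteq\mathbf{Proj}$ in the relevant sense is not needed — rather, that the countable-family counterexample is simultaneously a witness for $\neg\mathsf{AC}_\omega(\mathbb{R};\mathbf{Ctbl})$, so no extra work beyond the union construction is required. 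I would phrase the final write-up to lean on whichever of these two routes the \S 4 construction actually supports, keeping step (1) explicit and deferring the forcing-theoretic details to the proof of Theorem \ref{thm:main}.
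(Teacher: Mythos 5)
Your proposal diverges from the paper at the very first fork. The paper's proof of Corollary \ref{cor:1} is a two-line application of the compactness theorem of first-order logic: let $T_\infty$ be $\mathsf{ZF}$ together with the sentence $\neg\mathsf{AC}_\omega(\mathbb{R};\mathbf{Ctbl})$ and, for each $n$, the sentence $\mathsf{DC}(\mathbb{R};\boldsymbol{\Pi}^1_n)$; any finite fragment mentions only finitely many levels, so the model $N_k$ supplied by Theorem \ref{thm:main} for the largest mentioned $k$ satisfies it (since $\boldsymbol{\Pi}^1_m\subseteq\boldsymbol{\Pi}^1_k$ for $m\leq k$), and compactness yields a single (in general non-well-founded) model of all of $T_\infty$, hence of $\mathsf{DC}(\mathbb{R};\mathbf{Proj})$. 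Your step (1) --- that $\mathsf{DC}(\mathbb{R};\mathbf{Proj})$ amounts to the conjunction of the level-by-level principles --- is exactly the observation this argument rests on, and your step (4) matches Proposition \ref{proposition AC fail in N}. But you then set the compactness route aside and instead propose to build one symmetric extension validating every level simultaneously. That is not a variant of the paper's proof; it is an attempt at a strictly stronger result (a ``real'' model rather than a model of the theory), and it is precisely what the paper's argument is designed to avoid having to do.

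The gap is your step (2), which you correctly identify as the main obstacle but do not overcome. The paper has no single forcing notion that is $n$-absolute for slices for every $n$ at once: Theorem \ref{theorem existence absolute for slices forcing notion} produces, for each \emph{fixed} $n\geq 2$, a different $n$-slicing forcing, and its construction in Lemma \ref{lemma existence n complete sequence} diagonalises against a universal $\Sigma^{\HC}_{n-2}$ set to obtain a $\Delta^{\HC}_{n-1}$, $n$-complete sequence. Running that diagonalisation ``for all $n$ simultaneously'' would require a sequence complete for all $\HC$-definable sets while remaining definable at some fixed projective level, which the universal-set bookkeeping does not permit; correspondingly, the absoluteness needed for $\mathsf{DC}(\mathbb{R};\boldsymbol{\Pi}^1_n)$ in Proposition \ref{proposition DC holds in N} is $\Sigma^1_{n+1}$-absoluteness for slices, and nothing in \S\,4 provides this for all $n$ in one forcing. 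Your closing hedge (``if, as is likely, the construction in \S\,4 already produces the models $M_n$ as a coherent family\dots'') is a hope, not an argument, and it is false as stated: the $n$-Jensen forcings for different $n$ are built from different $n$-complete sequences with no coherence claim. The repair is simple: return to the compactness argument you mention in your opening sentence, since the corollary only asserts the existence of \emph{some} model of the theory.
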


\begin{proof}
We argue by compactness. Let $T_\infty$ be the theory $\mathsf{ZF}$ along with the sentences $\mathsf{DC}(\mathbb{R};\boldsymbol{\Pi}^1_n)$ for each $n\in\omega$ and the sentence $\neg\mathsf{AC}_\omega(\mathbb{R};\mathbf{Ctbl})$. Let $F \subsetneq T_\infty$ be a finite subtheory of $T_\infty$. Then $F$ only contains finitely many sentences of the form $\mathsf{DC}(\mathbb{R};\boldsymbol{\Pi}^1_n)$. Let $k$ be the maximum $n$ such that $\mathsf{DC}(\mathbb{R};\boldsymbol{\Pi}^1_n) \in F$. By Theorem \ref{thm:main}, there is a model $N_k$ which satisfies $\mathsf{ZF}$ and $\mathsf{DC}(\mathbb{R};\boldsymbol{\Pi}^1_k)$, and so 
$N_k \models F$. Hence $T_\infty$ is finitely satisfiable. By compactness, there is a model of $\mathsf{ZF}+\mathsf{DC}(\mathbb{R};\mathbf{Proj})+\neg\mathsf{AC}_\omega(\mathbb{R};\mathbf{Ctbl})$.
\end{proof}

This, and Lemma \ref{lem:1}, yields two corollaries:
\begin{corollary}
For every $n\geq 1$, there is a model of $\mathsf{ZF}+\mathsf{AC}_\omega(\mathbb{R};\mathrm{unif}\boldsymbol{\Pi}^1_n)+
\neg 
\mathsf{AC}_\omega(\mathbb{R};\mathrm{unif}\Pi^1_{n+1})$.
\label{cor:2}
\end{corollary}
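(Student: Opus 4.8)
The plan is to read off the desired model directly from the one produced by Theorem \ref{thm:main}, using Lemma \ref{lem:1} to convert the dependent-choice hypothesis into the uniform boldface countable-choice statement.

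Fix $n\geq 1$. First I would apply Theorem \ref{thm:main} to obtain a model $N$ of $\mathsf{ZF}+\mathsf{DC}(\mathbb{R};\boldsymbol{\Pi}^1_n)+\neg\mathsf{AC}_\omega(\mathbb{R};\mathrm{unif}\Pi^1_{n+1})$; the further conjunct $\neg\mathsf{AC}_\omega(\mathbb{R};\mathbf{Ctbl})$ supplied by Theorem \ref{thm:main} is harmless but not needed here. Next I would invoke Lemma \ref{lem:1} with $\Gamma=\boldsymbol{\Pi}^1_n$, which is one of the classes to which that lemma applies: since $N\models\mathsf{DC}(\mathbb{R};\boldsymbol{\Pi}^1_n)$, we conclude $N\models\mathsf{AC}_\omega(\mathbb{R};\mathrm{unif}\boldsymbol{\Pi}^1_n)$. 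Combining this with the conjunct $\neg\mathsf{AC}_\omega(\mathbb{R};\mathrm{unif}\Pi^1_{n+1})$ that already holds in $N$ by Theorem \ref{thm:main} shows that $N$ witnesses the corollary.

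There is essentially no obstacle, as all the substantive work has been carried out in Theorem \ref{thm:main} and Lemma \ref{lem:1}; the only points to watch are the bookkeeping of the index $n$ and the fact that Lemma \ref{lem:1} must be applied to the \emph{boldface} class $\boldsymbol{\Pi}^1_n$, so that its hypothesis $\mathsf{DC}(\mathbb{R};\boldsymbol{\Pi}^1_n)$ matches exactly the dependent-choice conjunct that Theorem \ref{thm:main} provides. I would also remark that one cannot route this argument through Corollary \ref{cor:1} alone: $\neg\mathsf{AC}_\omega(\mathbb{R};\mathbf{Ctbl})$ does not obviously imply $\neg\mathsf{AC}_\omega(\mathbb{R};\mathrm{unif}\Pi^1_{n+1})$, since a countable family of reals need not be uniformly \emph{lightface} $\Pi^1_{n+1}$, so the direct appeal to Theorem \ref{thm:main} is the natural route.
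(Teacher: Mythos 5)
Your proposal is correct and is exactly the argument the paper intends: the paper derives Corollary \ref{cor:2} by combining Theorem \ref{thm:main} with Lemma \ref{lem:1} applied to $\Gamma=\boldsymbol{\Pi}^1_n$, precisely as you do. Your closing remark that one cannot route the argument through Corollary \ref{cor:1} alone is also accurate, since that corollary does not supply the failure of $\mathsf{AC}_\omega(\mathbb{R};\mathrm{unif}\Pi^1_{n+1})$.
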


\begin{corollary}\label{cor:3}
There is a model of $\mathsf{ZF}+\mathsf{AC}_\omega(\mathbb{R};\mathrm{unif}\mathbf{Proj})+
\neg\mathsf{AC}_\omega(\mathbb{R};\mathbf{Ctbl})$.
\end{corollary}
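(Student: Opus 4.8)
The plan is to show that no new forcing construction is needed: the model already produced in Corollary \ref{cor:1} witnesses Corollary \ref{cor:3}. Let $N$ be a model of $\mathsf{ZF}+\mathsf{DC}(\mathbb{R};\mathbf{Proj})+\neg\mathsf{AC}_\omega(\mathbb{R};\mathbf{Ctbl})$, as supplied by Corollary \ref{cor:1}. The second conjunct of Corollary \ref{cor:3}, namely $\neg\mathsf{AC}_\omega(\mathbb{R};\mathbf{Ctbl})$, then holds in $N$ by hypothesis, so the whole task reduces to checking that $N\models\mathsf{AC}_\omega(\mathbb{R};\mathrm{unif}\mathbf{Proj})$.

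For this, I would first note that $\mathsf{DC}(\mathbb{R};\mathbf{Proj})$ implies $\mathsf{DC}(\mathbb{R};\boldsymbol{\Pi}^1_n)$ for every $n\geq 1$: a set in $\boldsymbol{\Pi}^1_n$ and a relation in $\boldsymbol{\Pi}^1_n$ are in particular projective, so every instance of the latter principle is an instance of the former. By Lemma \ref{lem:1} it follows that $N\models\mathsf{AC}_\omega(\mathbb{R};\mathrm{unif}\boldsymbol{\Pi}^1_n)$ for all $n\geq 1$. The remaining point is that, taken together over all $n$, these principles give $\mathsf{AC}_\omega(\mathbb{R};\mathrm{unif}\mathbf{Proj})$; concretely, $\mathrm{unif}\mathbf{Proj}=\bigcup_n\mathrm{unif}\boldsymbol{\Pi}^1_n$. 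Indeed, given a countable family $A=\{A_n:n\in\omega\}$ of non-empty sets of reals that is uniformly projective, its code $\widehat{A}$ is projective, hence $\widehat{A}\in\boldsymbol{\Pi}^1_m$ for some $m\geq 1$ (every projective set lies in some $\boldsymbol{\Pi}^1_k$, since $\boldsymbol{\Sigma}^1_k\subseteq\boldsymbol{\Pi}^1_{k+1}$). As $A_n=\{x\in\mathbb{R}:nx\in\widehat{A}\}$ is the preimage of $\widehat{A}$ under the recursive map $x\mapsto nx$, and $\boldsymbol{\Pi}^1_m$ is closed under recursive preimages --- a fact provable in $\mathsf{ZF}$, as it only amounts to composing a tree representation with a computable function and uses no choice --- each $A_n$ also lies in $\boldsymbol{\Pi}^1_m$. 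Thus $A\in\mathrm{unif}\boldsymbol{\Pi}^1_m$, and the instance of $\mathsf{AC}_\omega(\mathbb{R};\mathrm{unif}\boldsymbol{\Pi}^1_m)$ available in $N$ yields a choice function for $A$. Since $A$ was arbitrary, $N\models\mathsf{AC}_\omega(\mathbb{R};\mathrm{unif}\mathbf{Proj})$, as required.

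I expect no genuine obstacle here; the argument is purely formal once Corollary \ref{cor:1} and Lemma \ref{lem:1} are available, and in particular it needs no further compactness step. The only point deserving a word of care --- which I would make explicit rather than leave implicit --- is that a single projective level captures $\widehat{A}$ together with all of its sections $A_n$ simultaneously, so that the ``$\mathbf{Proj}$'' principle is exactly the union over $n$ of the ``$\boldsymbol{\Pi}^1_n$'' principles, and that this reduction goes through without any hidden appeal to choice.
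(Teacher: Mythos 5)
Your proof is correct and follows exactly the route the paper intends: the paper derives Corollary \ref{cor:3} from Corollary \ref{cor:1} together with Lemma \ref{lem:1} (it states this without spelling out the details), and your argument simply makes explicit the levelwise application of Lemma \ref{lem:1} and the observation that $\mathrm{unif}\mathbf{Proj}$ is the union of the $\mathrm{unif}\boldsymbol{\Pi}^1_n$.
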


Corollary \ref{cor:2} separates the horizontal slices of the diagram on the uniform side 
from each other. In the diagram, this is indicated by the dashed line between 
$\mathsf{AC}_\omega(\mathbb{R};\mathrm{unif}\boldsymbol{\Pi}^1_n)$ and
$\mathsf{AC}_\omega(\mathbb{R};\mathrm{unif}\Pi^1_{n+1})$, indicating that there is no implication from the lower horizontal slice to the higher slice.

Corollary \ref{cor:3} separates the boldface non-uniform part of the diagram from the uniform part by showing that the strongest uniform principle does not imply the weakest non-uniform boldface principle. This is indicated by the dashed line between 
$\mathsf{AC}_\omega(\mathbb{R};\mathbf{Ctbl})$ and 
$\mathsf{AC}_\omega(\mathbb{R};\mathrm{unif}\boldsymbol{\Pi}^1_1)$ in the diagram. So, in particular $\AC_\omega(\R;\mathbf{\Sigma}^1_n)$ implies but, is not implied by,  $\AC_\omega(\R;\mathsf{unif}\mathbf{\Sigma}^1_n)$.

Furthermore, by the remark that $\mathbf{Ctbl}\subseteq\mathbf{F}_\sigma\subseteq
\boldsymbol{\Sigma}^1_1$, our results separate all non-trivial non-uniform Borel
descriptive choice principles from the uniform descriptive choice principles. 

Any weaker Borel descriptive choice principles are provable in $\mathsf{ZF}$:

\begin{proposition}[Folklore]
The descriptive choice principle 
$\mathsf{AC}_\omega(\mathbb{R};\mathbf{F}_\sigma\cap\mathbf{G}_\delta)$ is provable in $\mathsf{ZF}$.
\end{proposition}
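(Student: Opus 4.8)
The plan is to prove something slightly stronger than is asked, namely that there is a $\mathsf{ZF}$-definable operation $A \mapsto e(A)$ picking an element $e(A)\in A$ out of every non-empty set of reals $A$ that is both $F_\sigma$ and $G_\delta$ (i.e.\ $A\in\boldsymbol{\Delta}^0_2$). Granting this, if $\langle A_n : n\in\omega\rangle$ is any family of non-empty sets each lying in $\mathbf{F}_\sigma\cap\mathbf{G}_\delta$, then $n\mapsto e(A_n)$ is a choice function for the family, and it is a set by Replacement; no instance of $\mathsf{AC}$ is invoked. (Note $e$ is applied to each $A_n$ separately, so uniformity of the family plays no role; in fact this yields the stronger, non-uniform principle.) Morally this is just Hausdorff's theorem that $\boldsymbol{\Delta}^0_2$ sets are resolvable, carried out so that every choice made along the way is canonical.

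To define $e(A)$, fix once and for all the canonical well-ordering $<$ of $\omega^{<\omega}$ (by length, then lexicographically). For a closed set $C\subseteq\R$ let $d_A(C)$ be the set of points of $C$ at which $\chi_A\restriction C$ is discontinuous; this is a closed subset of $C$, and it is defined from $A$ and $C$ without any use of choice. By transfinite recursion set $C_0 := \overline{A}$, $C_{\alpha+1} := d_A(C_\alpha)$, and $C_\lambda := \bigcap_{\alpha<\lambda}C_\alpha$ at limits $\lambda$. The crucial point is that, since $A\in\boldsymbol{\Delta}^0_2$, this sequence strictly decreases until it reaches $\emptyset$. Indeed, fix any decompositions $A=\bigcup_n F_n$ and $\R\setminus A=\bigcup_n G_n$ into closed sets (such exist, which is all we need for this existence claim). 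For non-empty closed $C$ we have $C=\bigcup_n(C\cap F_n)\cup\bigcup_n(C\cap G_n)$, a countable union of sets closed in $C$; by the Baire category theorem --- valid in $\mathsf{ZF}$ for $\R=\omega^\omega$ and its closed subspaces, since in the usual argument one can always pass to the $<$-least basic neighbourhood with the desired property --- some $C\cap F_n$ or $C\cap G_n$ has non-empty interior in $C$, giving an $s$ with $\emptyset\ne[s]\cap C\subseteq A$ or $\emptyset\ne[s]\cap C\subseteq\R\setminus A$; either way $\chi_A\restriction C$ has a continuity point, so $d_A(C)\subsetneq C$. Hence there is a least ordinal $\alpha^\ast$ with $C_{\alpha^\ast}=\emptyset$ (it is countable: $\alpha\mapsto$ ``the $<$-least $s$ with $[s]\cap C_\alpha\ne\emptyset=[s]\cap C_{\alpha+1}$'' embeds $\alpha^\ast$ into $\omega^{<\omega}$), and the layers $L_\alpha := C_\alpha\setminus C_{\alpha+1}$ ($\alpha<\alpha^\ast$) partition $\overline{A}$.

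To read off the point: since $A\ne\emptyset$ and $A\subseteq\overline{A}=\bigsqcup_{\alpha<\alpha^\ast}L_\alpha$, the set $\{\alpha : A\cap L_\alpha\ne\emptyset\}$ is a non-empty set of ordinals; let $\gamma$ be its least element. Take any $x\in A\cap L_\gamma$; as $x\notin d_A(C_\gamma)$ the function $\chi_A\restriction C_\gamma$ is continuous at $x$, and $\chi_A(x)=1$, so there is a basic clopen $[s]$ with $x\in[s]$ and $\emptyset\ne[s]\cap C_\gamma\subseteq A$. Let $s^\ast$ be the $<$-least such $s$, and let $e(A)$ be the leftmost branch of the non-empty closed set $[s^\ast]\cap C_\gamma$; then $e(A)\in A$, and $e(A)$ does not depend on the auxiliary choice of $x$. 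Every ingredient --- forming $\overline{A}$ and $d_A$, running the (set-length) transfinite recursion, taking a least ordinal, taking a $<$-least finite sequence, reading off a leftmost branch --- is $\mathsf{ZF}$-definable from $A$, so $e$ is as required. I expect the only delicate point to be the handling of limit stages: one must extract the point from $C_\gamma$, a closed set in which $A$ is relatively open, rather than from $C_\delta$ for the least $\delta$ with $A\cap C_\delta=\emptyset$, which need not be a successor ordinal; passing to the layers $L_\alpha$ is precisely what sidesteps this. Checking that the two background facts used here --- termination of the derivative at a countable stage, and the Baire category theorem for closed subsets of $\omega^\omega$ --- are genuine $\mathsf{ZF}$ theorems is then routine, both resting on the canonical well-ordering of $\omega^{<\omega}$.
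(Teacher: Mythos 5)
Your proof is correct, and it amounts to a self-contained version of the argument that the paper gives largely by citation. The paper's proof invokes Hausdorff's Difference Lemma together with the cited fact that an $\mathbf{F}_\sigma\cap\mathbf{G}_\delta$ set admits a \emph{canonically specified} representation $A=\mathrm{Diff}_{\beta<\alpha}C_\beta$ as a difference of closed sets; it then takes the least even $\beta$ with $C_\beta\supsetneq C_{\beta+1}$, the least basic (cl)open set meeting $C_\beta$ but not $C_{\beta+1}$, and the leftmost branch of the resulting closed set. You instead build the canonical decreasing sequence directly as the Hausdorff--Kuratowski derivative $C\mapsto d_A(C)$ of discontinuity points of $\chi_A$, prove strict descent by a choiceless Baire category argument, show termination at a countable stage via the injection into $\omega^{<\omega}$, and then select from the least layer meeting $A$. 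These are two faces of the same theorem (difference representations versus resolvability of $\boldsymbol{\Delta}^0_2$ sets), and the final selection step is the same in spirit (least index, least basic clopen set, leftmost branch); what your version buys is that it makes explicit exactly where canonicity enters and verifies the two background facts (countable termination of the derivative, and BCT for closed subspaces of $\omega^\omega$) inside $\mathsf{ZF}$, rather than outsourcing the canonical representation to a reference. Two cosmetic points: it is worth noting that $d_A(C)$ is closed because the continuity points of a characteristic function form a relatively open set; and for $e(A)$ to be literally independent of the auxiliary point $x$, the minimisation defining $s^\ast$ should range over all $s$ with $\emptyset\neq[s]\cap C_\gamma\subseteq A$, the clause $x\in[s]$ serving only to witness that at least one such $s$ exists.
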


\begin{proof}
We can canonically pick an element from any $\mathbf{F}_\sigma\cap\mathbf{G}_\delta$ set like so: 
by Hausdorff's Difference Lemma, every $\mathbf{F}_\sigma \cap \mathbf{G}_\delta$ set $A$ of reals can be written as an $\alpha$-difference of closed sets for some $\alpha < \omega_1$, i.e.\ there is a sequence $\langle C_\beta : \beta < \alpha \rangle$ of closed sets of reals such that
\begin{align*}
    A= \mathrm{Diff}_{\beta<\alpha}C_\beta:=\{x \in \bigcup_{\beta < \alpha} C_\beta : \text{ the least } \beta < \alpha \text{ such that } x \notin C_\beta \text{ is odd}\}.
\end{align*}
With extra care, we can uniquely specify such a sequence (cf.\ \cite[3.E.1]{andretta2001notes}). Let $A\not=\emptyset$ be a $\mathbf{F}_\sigma\cap\mathbf{G}_\delta$  set of reals, and let $\langle C_\beta : \beta < \alpha \rangle$ be its sequence. So, it suffices to canonically define a real in $A$ from $\langle C_\beta : \beta < \alpha \rangle$.  There is a minimal even $\beta < \alpha$ such that  $C_\beta \supsetneq C_{\beta+1}$. Let $\{O_n : n \in \omega\}$ enumerate the basic open sets and let $n \in \omega$ be minimal such that $O_n \cap C_{\beta+1}= \emptyset$ and $O_n \cap C_\beta \neq \emptyset$. Since $O_n \cap C_\beta$ is closed, there is a unique pruned tree $T\subseteq \omega^{<\omega}$ such that $[T]= O_n \cap C_\beta$. The left-most branch of $T$ yields a real in $A$.
\end{proof}

\section{Model construction for the main theorem}

\subsection{Slicing forcing notions}\label{section slicing forcing notions}

Here, we describe $n$-slicing forcing notions. These are key to our separation results: in Section \ref{section proof main theorem}, we use $n$-slicing forcing notions to prove Theorem \ref{thm:main}. Before we can introduce $n$-slicing forcing notions, we need two further definitions, \emph{$n$-slice products}, and \emph{$n$-absoluteness for slices}. The former is a special variant of an $\omega$-product of forcing notions. The latter specifies a form of complexity-limited absoluteness for $n$-slice products.

Let $\xi \leq \omega_1$ be an ordinal and let $\langle \mathbb{P}_\nu : \nu < \xi \rangle$ be a sequence of forcing notions. We define the \emph{$\omega$-slice product of $\langle \mathbb{P}_\nu : \nu < \xi \rangle$ with finite support} as the set $\pz$ of all partial functions $p$ such that $\dom{p} \subseteq \xi \times \omega$ is finite and for every $(\nu,k) \in \xi \times \omega$, $p(\nu,k) \in \pz_\nu$ ordered by
\begin{align*}
    p \leq q :\iff \forall (\nu,k) \in \dom{q} (p(\nu,k) \leq q(\nu,k)).
\end{align*}
We say $\mathbb{P}$ has \emph{length} $\xi$. If, for every  $\nu,\nu' <\omega_1$, $\pz_\nu = \pz_{\nu'}$, then we say $\pz$ is the \emph{$\omega$-slice product of $\pz_\nu$} with finite support of length $\xi$. Let $I \subseteq \xi \times \omega$. For $A \subseteq \pz$, we define $A {\upharpoonright} I := \{p {\upharpoonright} I : p \in A\}$. Then for every $\pz$-generic filter $G$, $G {\upharpoonright} I$ is a $\pz {\upharpoonright} I$-generic filter.

Let $\xi = \omega_1$. We define $\mathrm{Aut}(\omega_1\times \omega, \omega)$ as the group of all bijections $\pi$ of $\omega_1 \times \omega$ such that for every $\nu \in \omega_1$, $\pi[\{\nu\}\times \omega]=\{\nu\}\times \omega$. Let $p \in \pz$ and let $\pi \in \mathrm{Aut}(\omega_1\times \omega, \omega)$. We define $\pi^\ast(p):=p'$ with $\dom{p'} = \pi[\dom{p}]$ and for $i\in \dom{p'}$, $p'(i) = p(\pi^{-1}(i))$. So, every automorphism $\pi \in \mathrm{Aut}(\omega_1\times \omega, \omega)$ induces an automorphism $\pi^\ast$ on $\pz$. Let $\mathcal{G} := \{\pi^\ast : \pi \in \mathrm{Aut}(\omega_1\times \omega, \omega)\}$ be the group of all such automorphisms on $\pz$.

We say $\mathcal{S} \subseteq \mathcal{P}( \omega_1 \times \omega)$ is a \emph{set of slices} if, for every $s \in \mathcal{S}$, $s$ is of the form $X \times \omega$, where $X \subseteq \omega_1$. Let $\mathcal{S} \subseteq \mathcal{P}( \omega_1 \times \omega)$ be a set of slices and for every $s \in \mathcal{S}$, let $H_s := \{\pi^\ast : \forall (\nu,k) \in s (\pi(\nu,k)=(\nu,k))\} \subseteq \mathcal{G}$ be the subgroup of all $\pi^\ast$ such that $\pi$ point-wise fixes $s$. We define $\mathcal{F}_\mathcal{S}$ as the filter on the subgroups of $\mathcal{G}$ generated by $\{H_s : s \in \mathcal{S}\}$. Then $\mathcal{F}_\mathcal{S}$ is normal. Let $G$ be a $\pz$-generic filter. We denote the symmetric extension we obtain from $G$ and $\mathcal{F}_\mathcal{S}$ by $V(G,\mathcal{S})$.

\begin{lemma}\label{lemma V[G|z] subset N}\label{lemma sets of ordinals in N}
Let $\langle \mathbb{P}_\nu : \nu < \omega_1 \rangle$ be a sequence of forcing notions, let $\pz$ be the $\omega$-slice product of $\langle \mathbb{P}_\nu : \nu < \omega_1 \rangle$ with finite support, let $\mathcal{S} \subseteq \mathcal{P}( \omega_1 \times \omega)$ be a set of slices, and let $G$ be a $\pz$-generic filter. Then 
\begin{enumerate}
    \item for every $s \in \mathcal{S}$, $V[G {\upharpoonright} s] \subseteq V(G,\mathcal{S})$ and
    \item for every set of ordinals $X$, $X \in V(G,\mathcal{S})$ iff there is some $s\in \mathcal{S}$ such that $X \in V[G{\upharpoonright}s]$.
\end{enumerate}
\end{lemma}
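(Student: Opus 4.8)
The plan is to verify the two claims using the standard machinery of symmetric extensions, exploiting the fact that $\mathcal{F}_\mathcal{S}$ is the normal filter generated by the pointwise stabilizers $H_s$ of the slices $s \in \mathcal{S}$.

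For part (1), I would first observe that $V[G {\upharpoonright} s]$ can be generated by a single set of ordinals in $V(G,\mathcal{S})$, so it suffices to show $G {\upharpoonright} s \in V(G,\mathcal{S})$ (since then $V[G{\upharpoonright}s] \subseteq V(G,\mathcal{S})$ follows because $V(G,\mathcal{S})$ is a model of $\ZF$ containing $V$ and this generic object). To see $G {\upharpoonright} s \in V(G,\mathcal{S})$, I would fix a natural name for it: since $\dom{p} \subseteq \omega_1 \times \omega$ is finite for each $p \in \pz$, the restriction map $p \mapsto p {\upharpoonright} s$ is definable, and one builds a $\pz$-name $\dot{g}_s$ whose evaluation by any generic filter $H$ is $H {\upharpoonright} s$. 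The key point is that $\dot{g}_s$ is \emph{symmetric} with respect to $\mathcal{F}_\mathcal{S}$: for any $\pi^\ast \in H_s$, the automorphism $\pi$ fixes every coordinate in $s$ pointwise, hence $\pi^\ast$ fixes $\dot{g}_s$ (up to the usual recursive renaming of names), so $\mathrm{sym}(\dot{g}_s) \supseteq H_s \in \mathcal{F}_\mathcal{S}$. One then checks $\dot{g}_s$ is hereditarily symmetric — this is immediate because it is built from check-names and the forcing conditions, all of which are hereditarily symmetric. Therefore $\dot{g}_s^G = G {\upharpoonright} s \in V(G,\mathcal{S})$.

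For part (2), the backward direction is immediate from part (1): if $X \in V[G {\upharpoonright} s]$ for some $s \in \mathcal{S}$, then $X \in V(G,\mathcal{S})$ since $V[G{\upharpoonright}s] \subseteq V(G,\mathcal{S})$. For the forward direction, suppose $X$ is a set of ordinals in $V(G,\mathcal{S})$, witnessed by a hereditarily symmetric name $\dot{X}$ with $\dot{X}^G = X$. By definition of $\mathcal{F}_\mathcal{S}$ as the filter generated by $\{H_s : s \in \mathcal{S}\}$, there is some $s \in \mathcal{S}$ with $H_s \subseteq \mathrm{sym}(\dot{X})$, i.e.\ every $\pi^\ast \in H_s$ fixes $\dot{X}$. (Here one may need $\mathcal{S}$ to be closed under finite unions to literally get a single $s$; if it is not, I would instead note that the generated filter contains $H_{s_1} \cap \dots \cap H_{s_k} \supseteq H_{s_1 \cup \dots \cup s_k}$ and argue that a finite union of slices is again handled by the corresponding $V[G {\upharpoonright} \bigcup s_i]$, reducing to that case — alternatively one works with whatever closure property on $\mathcal{S}$ the surrounding text provides.) The standard symmetric-forcing argument (the ``approximation'' or ``restriction'' lemma, cf.\ the proof that symmetric extensions are models of $\ZF$) now shows that for each ordinal $\alpha$, whether $\alpha \in X$ is decided by $G {\upharpoonright} s$: given a condition $p \in G$ forcing $\check\alpha \in \dot X$, any $\pi \in \mathrm{Aut}(\omega_1 \times \omega, \omega)$ fixing $s$ pointwise satisfies $\pi^\ast(p) \Vdash \check\alpha \in \dot X$ as well, and since conditions outside $s$ can be moved freely by such $\pi$ while staying compatible, a density argument shows $p {\upharpoonright} s$ already forces $\check\alpha \in \dot X$. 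Hence $X = \{\alpha : (\exists q \in G {\upharpoonright} s)\, q \Vdash_{\pz {\upharpoonright} s} \check\alpha \in \dot X\}$ is computable in $V$ from $G {\upharpoonright} s$, so $X \in V[G {\upharpoonright} s]$.

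The main obstacle is the forward direction of part (2), specifically the homogeneity argument showing that $p {\upharpoonright} s$ suffices to decide membership in $X$. The delicate point is that one must move the ``non-$s$ part'' of a condition $p$ out of the way using an automorphism $\pi^\ast \in H_s$ without affecting whether $\check\alpha \in \dot X$ is forced; this works precisely because $\mathrm{Aut}(\omega_1 \times \omega, \omega)$ acts transitively enough on the coordinates in each slice $\{\nu\} \times \omega$ to make any two conditions agreeing on $s$ compatible after a suitable automorphism, and because $\dot X$ is $H_s$-invariant. I would also need to make sure the interaction between the finite-support structure of $\pz$ and the restriction $\pz {\upharpoonright} s$ (which is again a finite-support $\omega$-slice product, this time over the index set $s$) is clean, but this is routine given the definitions already set up in the excerpt.
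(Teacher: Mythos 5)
Your proposal is correct and follows essentially the same route as the paper: part (1) via the $H_s$-invariance of $\pz{\upharpoonright}s$-related names (the paper shows every $\pz{\upharpoonright}s$-name is hereditarily symmetric, while you show the single name for $G{\upharpoonright}s$ is symmetric and invoke minimality of $V[G{\upharpoonright}s]$ — a cosmetic difference), and part (2) via exactly the same homogeneity argument, moving the finite non-$s$ part of a condition with an automorphism in $H_s$ to show that membership of each ordinal in $X$ is decided by $G{\upharpoonright}s$. Your remark about needing $\mathcal{S}$ closed under finite unions to obtain a single $s$ with $H_s\subseteq\sym{\dot{X}}$ is a legitimate point that the paper glosses over (and which holds for the set of slices $Z$ used in the application).
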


\begin{proof}
For the first part, let $X \in V[G{\upharpoonright}s]$. Then there is a $\pz{\upharpoonright}s$-name $\dot{X}$ such that $\dot{X}_{G{\upharpoonright}s}=X$. Since $\pz{\upharpoonright}s \subseteq \pz$, $\dot{X}$ is also a $\pz$-name and $\dot{X}_G =X$. Hence, it is enough to show that every $\pz {\upharpoonright} s$-name is hereditarily symmetric. Let $\sigma$ be a $\pz {\upharpoonright} s$-name, let $\pi^\ast \in H_s$, and let $(\tau,p)\in \sigma$. By assumption $p \in \pz {\upharpoonright}s$ and $\pi^\ast$ fixes $s$, so $\pi^\ast(p)=p$. Hence, $\pi^\ast(\sigma)=\{(\pi^\ast(\tau),\pi^\ast(p)) : (\tau,p)\in \sigma\}= \{(\pi^\ast(\tau),p) : (\tau,p)\in \sigma\}$. By induction, $\sigma$ is hereditarily symmetric.

We prove the second part. The backwards direction follows directly from the first part. For the forwards direction, let $\dot{X}$ be a symmetric $\pz$-name for $X$, let $s \in \mathcal{S}$ such that $H_s \subseteq \sym{\dot{X}}$, and let $p \in G$ such that $p \Vdash$ ``$\dot{X}$ is a set of ordinals''. We define a $\pz{\upharpoonright}s$-name $\sigma$ such that $\sigma_{G{\upharpoonright}s} = X$. Let $\sigma:=\{(\check{\xi}, q{\upharpoonright}s) : q \in \pz, $ and $q \leq p ,$ and $ q \Vdash \check{\xi} \in \dot{X}\}$ and let $X' := \sigma_{G{\upharpoonright}s}$.

First, we prove $X \subseteq X'$. Let $\xi \in X$. Then there is a $q \in G$ such that $(\check{\xi},q)\in \dot{X}$.  As $G$ is a filter, there is an $r \in G$ which witnesses that $p$ and $q$ are compatible. Hence, $r \leq p$ and $r \Vdash \check{\xi} \in \dot{X}$. Then $(\check{\xi}, r{\upharpoonright}s) \in \sigma$ and so $\xi \in X'$. 

Next, we prove $X' \subseteq X$. Let $\xi \in X'$. Then there is a $q \in G {\upharpoonright} s$ such that $(\check{\xi},q) \in \sigma$. Hence, there is an $r\leq p$ such that $q=r{\upharpoonright}s$ and $ r \Vdash \check{\xi} \in \dot{X}$. We suppose for a contradiction, that $q \not\Vdash \check{\xi}\in \dot{X}$. Then there is a $q' \leq q$ such that $q' \Vdash \check{\xi}\notin \dot{X}$. Hence, $q'$ and $r$ are incompatible, but agree on $s$ as functions. Let $\pi \in H_s$ such that $\dom{r} \cap \pi[\dom{q'}] \subseteq s$. Then $\pi^\ast(q')$ and $r$ are compatible, as they agree on their common domain. By construction $\pi^\ast \in H_s \subseteq \sym{\dot{X}}$, so $\pi^\ast(\dot{X})= \dot{X}$. Hence, $\pi^\ast(q') \Vdash \check{\xi}\notin \dot{X}$ which is a contradiction since $r$ and $\pi^\ast(q')$ are compatible. Therefore, $q \Vdash \check{\xi}\in \dot{X}$ and so $\xi \in X$.
\end{proof}

\begin{definition}
Let $\pz$ be an $\omega$-slice product with finite support of a sequence $\langle \mathbb{P}_\nu : \nu < \omega_1 \rangle$ of forcing notions, let $\mathcal{S} \subseteq \mathcal{P}(\omega_1 \times \omega)$ be a set of slices, and let $n\geq 1$. We say $\pz$ is \emph{$n$-absolute for $\mathcal{S}$-slices} if, for every $s \in \mathcal{S}$, every $\Sigma^1_n$ formula with real parameters $V[G {\upharpoonright} s]$ is absolute between $V[G {\upharpoonright} s]$ and $V[G]$. A set of slices $\mathcal{S} \subseteq \mathcal{P}( \omega_1 \times \omega)$ is \emph{unbounded} if, for every $s \in \mathcal{S}$, $\{\nu : \{\nu\}\times \omega \subseteq s \}$ is unbounded in $\omega_1$. We say $\pz$ is \emph{$n$-absolute for slices} if  for every unbounded set of slices $\mathcal{S}$, $\pz$ is $n$-absolute for $\mathcal{S}$-slices.
\end{definition}

Note that any $\omega$-slice product with finite support of length $\omega_1$ is 2-absolute for slices, by Shoenfield absoluteness \cite[Theorem 25.20]{jech2003set}.

\begin{definition}
A forcing notion $\pz$ is \emph{$n$-slicing} if there is a sequence of forcing notions $\langle \mathbb{P}_\nu : \nu < \omega_1 \rangle$ such that
\begin{enumerate}
    \item $\pz$ is the $\omega$-slice product of $\langle \mathbb{P}_\nu : \nu < \omega_1 \rangle$ with finite support,
    \item for every $\nu < \omega_1$, every $\pz_\nu$-generic filter $G$ over $V$ is uniquely determined by a real in $V[G] \setminus V$,
    \item $\pz$ is $n$-absolute for slices, and
    \item for every $\pz$-generic filter $G$, the set $\{(\ell,x_G^{(\ell,k)}) : (\ell,k)\in \omega^2\}$ is $\Pi^1_n$ in $V[G]$, where $x_G^{(\ell,k)}$ is the generic real defined by $G {\upharpoonright} \{(\ell,k)\}$.
\end{enumerate}
\end{definition}

\begin{lemma}[Sandwiching Lemma]\label{lemma n-absoluteness}
Let $n \in \omega$, let $\pz$ be $n$-slicing, let $G$ be a $\pz$-generic filter, and let $\mathcal{S} \subseteq \mathcal{P}( \omega_1 \times \omega)$ be an unbounded set of slices. For every $s \in \mathcal{S}$, every $\Sigma^1_n$-formula with real parameters in $V[G{\upharpoonright}s]$ is absolute between $V[G{\upharpoonright}s]$, $V[G]$, and $V(G,\mathcal{S})$. So every $\Sigma^1_n$-formula with real parameters in $V(G,\mathcal{S})$ is absolute between $V[G]$ and $V(G,\mathcal{S})$.
\end{lemma}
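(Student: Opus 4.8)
The plan is to deduce the Sandwiching Lemma from the two key facts established so far: Lemma~\ref{lemma V[G|z] subset N} and the defining clause that an $n$-slicing $\pz$ is $n$-absolute for slices. Fix an unbounded set of slices $\mathcal{S}$ and some $s \in \mathcal{S}$. By clause~(3) of the definition of $n$-slicing together with the definition of ``$n$-absolute for slices'', $\pz$ is $n$-absolute for $\mathcal{S}$-slices, so every $\Sigma^1_n$ formula with real parameters in $V[G{\upharpoonright}s]$ is absolute between $V[G{\upharpoonright}s]$ and $V[G]$. This gives one of the two endpoints for free; the work is to insert $V(G,\mathcal{S})$ into the sandwich, i.e.\ to show a $\Sigma^1_n$ statement with parameters in $V[G{\upharpoonright}s]$ holds in $V[G{\upharpoonright}s]$ iff it holds in $V(G,\mathcal{S})$.

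First I would argue upward absoluteness: if a $\Sigma^1_n$ formula $\varphi(r)$ with $r \in V[G{\upharpoonright}s]$ holds in $V[G{\upharpoonright}s]$, then since $V[G{\upharpoonright}s] \subseteq V(G,\mathcal{S})$ by part~(1) of Lemma~\ref{lemma V[G|z] subset N}, and $\Sigma^1_n$ formulas for $n \geq 1$ are preserved upward to any transitive model containing the relevant witnesses (the existential real witness at the top level together with $\Pi^1_{n-1}$-downward absoluteness, bottoming out at $\Sigma^1_1$ upward absoluteness / $\Pi^1_1$ absoluteness, which is Mostowski absoluteness and needs no choice), $\varphi(r)$ holds in $V(G,\mathcal{S})$. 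The same reasoning, applied between $V(G,\mathcal{S})$ and $V[G]$ (note $V(G,\mathcal{S}) \subseteq V[G]$ since the symmetric extension is a submodel of the full extension), gives that $\varphi(r)$ then also holds in $V[G]$. For the converse direction I would go the other way around: suppose $\varphi(r)$ holds in $V[G]$; then by $n$-absoluteness for $\mathcal{S}$-slices it holds in $V[G{\upharpoonright}s]$, and then by the upward direction just proved it holds in $V(G,\mathcal{S})$. Chaining these, $V[G{\upharpoonright}s] \models \varphi(r) \iff V[G] \models \varphi(r) \iff V(G,\mathcal{S}) \models \varphi(r)$, which is the first assertion.

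For the second assertion, let $\varphi(r)$ be $\Sigma^1_n$ with a real parameter $r \in V(G,\mathcal{S})$. Apply part~(2) of Lemma~\ref{lemma V[G|z] subset N}: coding $r$ as a set of ordinals (a real is a set of natural numbers, hence a set of ordinals), there is some $s \in \mathcal{S}$ with $r \in V[G{\upharpoonright}s]$. Then $r$ is a real parameter lying in $V[G{\upharpoonright}s]$, so the first assertion applies to $\varphi(r)$ with this $s$, giving in particular that $\varphi(r)$ is absolute between $V[G]$ and $V(G,\mathcal{S})$, as required.

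The main obstacle is not conceptual but a matter of care: one must verify that the elementary $\Sigma^1_n$ upward/downward preservation facts used throughout hold in the $\ZF$-only (indeed symmetric-model) setting, where one cannot appeal to $\AC_\omega(\R)$; this is fine because $\boldsymbol\Pi^1_1$ absoluteness between transitive models of $\ZF$ is provable in $\ZF$ (it reduces to wellfoundedness of the associated tree, which is absolute), and the projective levels above are built by alternating real quantification and complementation, each of which respects the inclusions of transitive models that agree about reals only insofar as witnesses transfer --- which is exactly what inclusion of models provides in the existential direction and what $n$-absoluteness for $\mathcal{S}$-slices provides in the problematic direction. A secondary point to state explicitly is that $V(G,\mathcal{S})$ and $V[G{\upharpoonright}s]$ and $V[G]$ are all transitive models of (enough of) $\ZF$ containing the same ordinals, so that the syntactic form of the formula is evaluated coherently in each.
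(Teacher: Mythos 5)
Your architecture matches the paper's: the paper proves the first assertion by induction on the projective level with analytic (Mostowski) absoluteness as the base case, and derives the second assertion from the first via part~(2) of Lemma~\ref{lemma sets of ordinals in N}; your cycle of implications $V[G{\upharpoonright}s]\Rightarrow V(G,\mathcal{S})\Rightarrow V[G]\Rightarrow V[G{\upharpoonright}s]$ is a sensible way to organise that induction. But there is a genuine gap in how you justify the two upward legs. You assert that $\Sigma^1_n$ statements are preserved upward along inclusions of transitive models, ``bottoming out'' at Mostowski absoluteness. That is false for $n\geq 4$ (and already needs Shoenfield, not Mostowski, for $n=3$): upward preservation of $\exists x\,\psi$ with $\psi\in\Pi^1_{n-1}$ requires the witness to keep satisfying $\psi$ in the larger model, i.e.\ \emph{upward} preservation of $\Pi^1_{n-1}$, equivalently \emph{downward} preservation of $\Sigma^1_{n-1}$ --- and that direction genuinely fails between transitive models in general (``there is a non-constructible real'' is $\Sigma^1_3$, true in a Cohen extension of $L$ but false in $L$, so $\Pi^1_3$, hence $\Sigma^1_4$, is not upward absolute). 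In the present setting the upward legs do hold, but only because the induction interleaves clause~(3) of $n$-slicing at \emph{every} level: the two-way absoluteness between the endpoints $V[G{\upharpoonright}s]$ and $V[G]$ at level $m$, combined with the inclusions, forces three-way agreement at level $m$, and it is this three-way agreement --- not Mostowski absoluteness --- that lets you push a witness up at level $m+1$. Your closing paragraph gestures at this (``what $n$-absoluteness for $\mathcal{S}$-slices provides in the problematic direction''), but the argument as written treats the upward legs as free general facts.

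A second, related omission: in the leg $V(G,\mathcal{S})\models\varphi\Rightarrow V[G]\models\varphi$ the existential witness $x$ lives in $V(G,\mathcal{S})$ and need not lie in $V[G{\upharpoonright}s]$, so the inductive hypothesis (stated for parameters in slice models) does not directly apply to the matrix $\psi(x,r)$. One must first relocate $x$ (coded together with $r$ as a single real) into some $V[G{\upharpoonright}s']$ using part~(2) of Lemma~\ref{lemma sets of ordinals in N} --- that is, the two assertions of the Sandwiching Lemma have to be proved by a simultaneous induction, with that lemma used inside the induction and not only at the very end as in your write-up. With these two repairs your proof coincides with the paper's.
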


\begin{proof}
The second part follows from the first and Lemma \ref{lemma sets of ordinals in N}. The first part follows by induction, the base case is analytic absoluteness \cite[25.4]{jech2003set}.
\end{proof}

The following theorem will be proved in Section \ref{section n Jensen forcing}.

\begin{theorem}\label{theorem existence absolute for slices forcing notion}
Let $n \geq 2$. In $L$, there is an $n$-slicing forcing notion. 
\end{theorem}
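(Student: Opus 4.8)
The plan is to build the $n$-slicing forcing notion by iterating the Jensen-forcing idea $n-2$ times, starting from the classical Jensen forcing at the top. Recall that Jensen forcing $\mathbb{J}$, constructed in $L$ from a $\Diamond$-sequence, is a subposet of Sacks forcing which is countable and proper, and whose generic real is the unique element of $[{}^{<\omega}\omega]$ meeting all the dense sets coded into the $\Diamond$-sequence; crucially the set of Jensen-generic reals over $L$ is $\Pi^1_2$, because ``$x$ is generic'' can be expressed as ``$x$ meets every maximal antichain of $\mathbb{J}$ lying in a countable level of $L$'', and the relevant quantification over countable $L$-levels is arithmetic in a $\Pi^1_1$ well-foundedness statement. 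For the higher levels one does the same construction but relative to the previous model: working in $L$, one defines $\mathbb{P}_\nu$ by a simultaneous recursion so that $\mathbb{P}_\nu$ at ``Jensen level $m$'' is a Jensen-type forcing whose genericity over the appropriate inner model is $\Pi^1_m$. First I would recall and set up the classical Jensen construction carefully, isolating the two features we need: (i) a single $\mathbb{J}$-generic filter is coded by, and recoverable from, a single real not in the ground model (giving clause (2) of $n$-slicing for the case $\mathbb{P}_\nu$ a single copy of $\mathbb{J}$), and (ii) genericity of a real over $L$ is $\Pi^1_2$.

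Next I would form the $\omega$-slice product $\mathbb{P}$ of $\omega_1$-many copies of $\mathbb{J}$ (or the $n$-th iterate) with finite support as in Section 3, and verify clauses (1)-(4) of the definition of $n$-slicing in turn. Clause (1) is by construction. Clause (2) is feature (i) above. For clause (4), the set $\{(\ell, x_G^{(\ell,k)}) : (\ell,k)\in\omega^2\}$ should be $\Pi^1_n$ because membership reduces to saying that each $x_G^{(\ell,k)}$ is $\mathbb{J}$-generic over $L$ (more precisely over the model generated by the other coordinates, but by mutual genericity over $L$ this is arithmetic in being $\mathbb{J}$-generic over $L$), and genericity over $L$ is $\Pi^1_2$ in the base case, $\Pi^1_n$ in the iterated case; the pairing with $\ell$ and the double quantifier over $(\ell,k)$ do not raise the complexity. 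The heart of the matter is clause (3), $n$-absoluteness for slices: for an unbounded set of slices $\mathcal{S}$ and $s\in\mathcal{S}$, every $\Sigma^1_n$ formula with parameters in $V[G{\upharpoonright}s]$ must be absolute between $V[G{\upharpoonright}s]$ and $V[G]$. Since $V[G] = V[G{\upharpoonright}s][G{\upharpoonright}(\text{rest})]$ and the remaining coordinates are (by finite support and mutual genericity) a further slice product of Jensen forcings over $V[G{\upharpoonright}s]$, this amounts to showing that the slice-product Jensen forcing does not add $\Sigma^1_n$ facts about old reals. This is the key Jensen-style absoluteness argument: Jensen forcing is $\omega^\omega$-bounding and, more importantly, its finite-support $\omega_1$-product is ``minimal enough'' that new reals added are themselves Jensen-generic and do not serve as witnesses to new $\Sigma^1_{n}$ statements about ground-model reals — one argues by a reflection/fusion argument that any $\Sigma^1_n$ statement forced to hold is already decided in a countable subproduct lying in the ground model, and then uses the $\Pi^1_n$-characterisation of the generics together with the induction hypothesis at level $n-1$.

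The main obstacle, and the step I would budget most space for, is precisely this absoluteness argument in clause (3): one must show by induction on $n$ that the $n$-th iterated Jensen slice product preserves $\Sigma^1_n$ truth of old reals, and this requires the delicate interplay between the $\Pi^1_n$-definability of the generics (clause (4) at the previous level), properness/fusion for the slice product, and a careful analysis of which new reals the slice product adds (namely: every new real is, modulo finitely many coordinates, built from finitely many mutually-generic Jensen reals, hence itself of bounded projective complexity over $L$). A secondary obstacle is bookkeeping: the iterate must be defined so that at each stage the relevant $\Diamond$-sequence ``sees'' the previous forcing, which is why the hypothesis $n\geq 2$ appears (Shoenfield absoluteness already gives $2$-absoluteness for the bare slice product, as noted after the definition, so the genuine work is the inductive step from $n-1$ to $n$, needing the previous iterate as a parameter). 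I would also remark that the whole construction takes place in $L$ so that $\omega_1^L$ is available, $\Diamond$ holds, and the $\Pi^1_n$-definitions of the generics are correctly evaluated.
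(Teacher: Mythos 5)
There are two genuine gaps. The first and more serious one concerns clause (3), the $n$-absoluteness for slices. You propose to obtain it by ``iterating the Jensen-forcing idea $n-2$ times'' relative to previous models, together with a reflection/fusion argument showing that any forced $\Sigma^1_n$ statement is decided in a countable subproduct, plus an induction on $n$. This is not a mechanism that works as stated: for the plain $\omega_1$-length slice product of ordinary Jensen forcing, $\Sigma^1_3$ statements are in general \emph{not} absolute between $V[G{\upharpoonright}s]$ and $V[G]$, and relativising the construction to a previous extension does not by itself raise the level of absoluteness. The device the paper actually uses (following Kanovei--Lyubetsky) is different in kind: one builds the increasing sequence of perfect posets $\langle (M_\xi,P_\xi):\xi<\omega_1\rangle$ so that it is \emph{$n$-complete}, i.e.\ generic with respect to all $\Sigma^{\HC}_{n-2}$ subsets of the class $\mathcal{M}_{\mathcal{J}}$ of candidate extensions, and simultaneously $\Delta^{\HC}_{n-1}$ as a sequence. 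The $n$-completeness is what drives the Kanovei--Lyubetsky Lemma (via an auxiliary approximation of the forcing relation for $\Sigma^1_n$ formulas), and the $\Delta^{\HC}_{n-1}$ definability of the sequence is what makes the set of generics $\Pi^1_n$. Both properties come from this single construction; there is no induction on $n$ of the sort you describe, and without the $n$-completeness requirement your absoluteness step has no proof.

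The second gap concerns clause (4). You assert that $\{(\ell,x_G^{(\ell,k)}):(\ell,k)\in\omega^2\}$ is $\Pi^1_n$ because each $x_G^{(\ell,k)}$ is generic and ``the pairing with $\ell$ does not raise the complexity.'' But knowing that a real is generic for the underlying forcing does not tell you \emph{which} slice $\ell$ it came from, and the paper explicitly identifies this as the obstruction to using the slice product of an $n$-Jensen forcing directly. The fix is a concrete modification: on row $\ell\in\omega$ one forces with the subposet $\qz^{\ell+1}=\{T:\langle \ell+1\rangle\subseteq\stem{T}\}$, so that the first bit of each generic real encodes its row, and then $\{(\ell,x):x(0)=\ell+1\wedge x\in B\}$ is $\Pi^1_n$ where $B$ is the $\Pi^1_n$ set of all generics. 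Your proposal needs some such tracking device; ``mutual genericity'' alone does not recover $\ell$. (Two smaller inaccuracies: Jensen forcing is ccc of size $\aleph_1$, not countable; and the paper deliberately avoids the $\diamondsuit$-presentation because the hand construction is what generalises to $n$-complete sequences.)
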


\subsection{Theorem \ref{theorem existence absolute for slices forcing notion} implies the main theorem}\label{section proof main theorem}

In this section, we prove Theorem \ref{thm:main} under the assumption that Theorem \ref{theorem existence absolute for slices forcing notion} has been established. In particular, we construct our model from $L$ which then, by Theorem \ref{theorem existence absolute for slices forcing notion} has an $n$-slicing forcing notion for every $n \geq 2$.

We fix a natural number $n \geq 1$ and aim to construct a model of $\mathsf{ZF} + \mathsf{DC}(\mathbb{R};\boldsymbol{\Pi}^1_n) +
\neg \mathsf{AC}_\omega(\mathbb{R}; \mathrm{unif}\Pi^1_{n+1}) + \neg\mathsf{AC}_\omega(\mathbb{R};\mathbf{Ctbl})$. As $n+1 \geq 2$, let $\pz$ be an $(n+1)$-slicing forcing notion, let $G$ be a $\pz$-generic filter over $L$, let $Z:=\{(F \cup (\omega_1 {\setminus} \omega))\times \omega : F \subseteq \omega$ is finite$\}$, let $N:= L(G,Z)$, for every $\ell \in \omega$, let $A_\ell := \{x_G^{(\ell,k)} : k \in \omega\}$ and let $A:= \{A_\ell : \ell \in \omega\}$. Then, in $L[G]$, each $A_\ell$ is countable and $A$ is in $\mathrm{unif}\Pi^1_{n+1}$. We shall show that, in $N$, $A$ has no choice function, each $A_\ell$ is still countable, and $A$ is still in $\mathrm{unif}\Pi^1_{n+1}$. 

\begin{lemma}\label{lem: complexity A}
In $N$, $A$ is $\mathrm{unif}\Pi^1_{n+1}$ and for every $\ell \in \omega$, $A_\ell$ is countable.
\end{lemma}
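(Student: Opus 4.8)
The claim has two halves: (i) that in $N$ each $A_\ell = \{x_G^{(\ell,k)} : k\in\omega\}$ is countable, and (ii) that $A = \{A_\ell : \ell\in\omega\}$ is $\mathrm{unif}\Pi^1_{n+1}$. For (i), I would exhibit, for each fixed $\ell$, an explicit surjection $\omega \to A_\ell$ lying in $N$. The natural candidate is $k \mapsto x_G^{(\ell,k)}$, which is determined by the restriction $G\!\upharpoonright\!(\{\ell\}\times\omega)$; I must check this restriction is an element of $N = L(G,Z)$. Since $Z$ is a set of slices and every $s\in Z$ has the form $(F\cup(\omega_1\setminus\omega))\times\omega$, the slice $\{\ell\}\times\omega$ is contained in, say, $s_0 := (\{\ell\}\cup(\omega_1\setminus\omega))\times\omega \in Z$; by Lemma~\ref{lemma V[G|z] subset N}(1), $V[G\!\upharpoonright\!s_0]\subseteq N$, and $G\!\upharpoonright\!(\{\ell\}\times\omega)$ is computed from $G\!\upharpoonright\!s_0$, hence lies in $N$. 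The function $k\mapsto x_G^{(\ell,k)}$ is then definable from $G\!\upharpoonright\!s_0$ (using clause~(2) in the definition of $n$-slicing, which says each $x_G^{(\ell,k)}$ is recovered from $G\!\upharpoonright\!\{(\ell,k)\}$), so it belongs to $N$ and witnesses countability of $A_\ell$ there.

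For (ii), the key point is that the relevant $\Pi^1_{n+1}$ definitions are absolute enough to pass from $L[G]$ down to $N$. In $L[G]$, by clause~(4) of $n$-slicing applied to the $(n+1)$-slicing $\pz$, the set $W := \{(\ell,x_G^{(\ell,k)}) : (\ell,k)\in\omega^2\}$ is $\Pi^1_{n+1}$, and this is (a coding of) $\widehat A$; likewise each $A_\ell$ is $\Pi^1_{n+1}$ as a section of $W$. Since $Z$ is an unbounded set of slices (because $\omega_1\setminus\omega$ is unbounded in $\omega_1$, every $s\in Z$ contains cofinally many full slices), the Sandwiching Lemma (Lemma~\ref{lemma n-absoluteness}) gives that every $\Sigma^1_{n+1}$ formula with real parameters in $N$ is absolute between $L[G]$ and $N$ — hence the same holds for $\Pi^1_{n+1}$ formulas, since a $\Pi^1_{n+1}$ statement being true/false is a $\Sigma^1_{n+1}$ statement about its negation. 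I would therefore argue: the $\Pi^1_{n+1}$ formula defining $\widehat A$ (and each $A_\ell$) has parameters that are reals of $L[G]$ which in fact lie in $N$ — the relevant parameters come from the fixed arithmetic coding of the slice product and the $\Pi^1_{n+1}$ definition in clause~(4), so they are essentially in $L\subseteq N$ — and the set it defines is the same whether evaluated in $L[G]$ or $N$, again by Sandwiching applied to "$z\in\widehat A$" and its complement. Thus $\widehat A$ and the $A_\ell$ remain $\Pi^1_{n+1}$ in $N$, which is exactly $\mathrm{unif}\Pi^1_{n+1}$.

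\textbf{Main obstacle.} The delicate step is the bookkeeping around parameters and the direction of absoluteness. The Sandwiching Lemma transfers $\Sigma^1_{n+1}$ truth, but a set being \emph{equal} to a given $\Pi^1_{n+1}$-defined set in two models requires controlling both "$z$ is in it" ($\Pi^1_{n+1}$, handled via $\Sigma^1_{n+1}$-absoluteness of the negation downward) and "$z$ is not in it" ($\Sigma^1_{n+1}$, handled directly). One must be careful that every real parameter appearing in the $\Pi^1_{n+1}$ definition of $\widehat A$ actually belongs to $N$ — otherwise the Sandwiching Lemma does not apply — but this is fine because the definition in clause~(4) is uniform and its parameter (if any) can be taken in the ground model $L$, which is contained in $N$. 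A secondary subtlety is confirming that the $A_\ell$ are genuinely \emph{infinite} (so "countable" is the right word and, more importantly, that $A$ will later fail to have a choice function), but this follows since distinct pairs $(\ell,k)$ give distinct generic reals $x_G^{(\ell,k)}$ by mutual genericity over the slice product — a fact I would record but whose full exploitation belongs to the later lemma showing $A$ has no choice function in $N$. Once parameters and absoluteness directions are pinned down, both halves of the statement follow routinely.
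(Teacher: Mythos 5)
Your proposal is correct and follows essentially the same route as the paper: countability of $A_\ell$ via the enumeration given by $G\!\upharpoonright\!\bigl((\{\ell\}\cup(\omega_1\setminus\omega))\times\omega\bigr)$, which lies in $N$ by Lemma \ref{lemma V[G|z] subset N}, and the complexity claim via clause 4 of the $(n{+}1)$-slicing definition in $L[G]$ together with the Sandwiching Lemma to transfer the (lightface, hence parameter-free) $\Pi^1_{n+1}$ definition of $\widehat{A}$ down to $N$. Your extra remarks on absoluteness directions and unboundedness of $Z$ are correct but only make explicit what the paper leaves implicit.
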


\begin{proof}
For every $\ell \in \omega$, let $z:= (\{\ell\} \cup (\omega_1 \setminus \omega)) \times \omega$. Then $A_\ell$ is countable in $L[G{\upharpoonright} z]$ by the enumeration defined by $G {\upharpoonright} z$, hence $A_\ell$ is countable in $N$ by Lemma \ref{lemma V[G|z] subset N}. So we show the first part. Since $\pz$ is $(n+1)$-slicing, $\widehat{A}$ is $\Pi^1_{n+1}$ in $L[G]$. Let $\varphi$ be a $\Pi^1_{n+1}$ formula defining $\widehat{A}$. By Lemmas \ref{lemma V[G|z] subset N} and \ref{lemma n-absoluteness}, $N$ contains each $x_G^{(\ell,k)}$ and $\varphi$ is absolute between $L[G]$ and $N$. Therefore, $\varphi$ defines $\widehat{A}$ in $N$ and so $A$ is in $\mathrm{unif}\Pi^1_{n+1}$.
\end{proof}

\begin{proposition}\label{proposition AC fail in N}
In $N$, $\mathsf{AC}_\omega(\mathbb{R};\mathrm{unif}\Pi^1_{n+1})$ and $\mathsf{AC}_\omega(\mathbb{R};\mathbf{Ctbl})$ fail.
\end{proposition}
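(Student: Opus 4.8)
The plan is to show that the family $A = \{A_\ell : \ell \in \omega\}$ has no choice function in $N$, since both failing choice principles reduce to this once we know (from Lemma \ref{lem: complexity A}) that $A$ is in $\mathrm{unif}\Pi^1_{n+1}$ with each $A_\ell$ countable. Indeed, if $A$ had no choice function in $N$, then $\mathsf{AC}_\omega(\mathbb{R};\mathrm{unif}\Pi^1_{n+1})$ fails because $A \in \mathrm{unif}\Pi^1_{n+1}$ and each $A_\ell \neq \emptyset$, and $\mathsf{AC}_\omega(\mathbb{R};\mathbf{Ctbl})$ fails because each $A_\ell$ is countable in $N$ (also by Lemma \ref{lem: complexity A}), so $A$ is a countable family of non-empty countable sets. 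Hence the entire proposition reduces to the single claim: \emph{in $N$, $A$ has no choice function}.

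To prove that claim, I would argue by contradiction: suppose $f \in N$ is a choice function for $A$, so $f(\ell) \in A_\ell$ for every $\ell$. Since $f$ is essentially a real (it can be coded by an element of $\mathbb{R}$), by Lemma \ref{lemma sets of ordinals in N} there is some $z \in Z$, say $z = (F \cup (\omega_1 \setminus \omega)) \times \omega$ for a finite $F \subseteq \omega$, such that $f \in L[G{\upharpoonright}z]$. Now pick $\ell \in \omega \setminus F$. Then $f(\ell) = x_G^{(\ell,k)}$ for some $k \in \omega$, and $x_G^{(\ell,k)} \in L[G{\upharpoonright}z]$. But $z$ does not contain $\{\ell\} \times \omega$ — indeed $z \cap (\{\ell\} \times \omega) = \emptyset$ since $\ell \notin F$ and $\ell < \omega$ — so $G {\upharpoonright} z$ and $G {\upharpoonright} (\{\ell\} \times \omega)$ live on disjoint coordinate blocks, hence $L[G{\upharpoonright}z]$ is a generic extension of $L$ by a forcing with no influence on the $(\ell,k)$-coordinate. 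By the product analysis (the coordinatewise factoring of the $\omega$-slice product, together with clause (2) of the definition of $n$-slicing, which says the generic real $x_G^{(\ell,k)}$ determines and is determined by the $\pz_\ell$-generic $G{\upharpoonright}\{(\ell,k)\}$, and is \emph{not} in $L$), the real $x_G^{(\ell,k)}$ cannot belong to $L[G{\upharpoonright}z]$: it would have to be added by the factor forcing $\pz{\upharpoonright}z$, but that factor is independent of the $(\ell,k)$-coordinate, so $x_G^{(\ell,k)} \in L[G{\upharpoonright}z]$ would force $x_G^{(\ell,k)} \in L$, contradicting clause (2). This is the contradiction.

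The precise formulation of the independence step is where care is needed, so let me spell out the intended mechanism. The key point is genericity of $G {\upharpoonright} (\{\ell\} \times \omega)$ over $L[G {\upharpoonright} z]$: since $\dom$ of conditions are finite and $z$, $\{\ell\}\times\omega$ are disjoint, $\pz$ factors as (up to dense isomorphism) $\pz{\upharpoonright}z \times \pz{\upharpoonright}(\omega_1\times\omega \setminus z)$, and $G{\upharpoonright}z$, $G{\upharpoonright}(\omega_1\times\omega\setminus z)$ are mutually generic; in particular $G{\upharpoonright}\{(\ell,k)\}$ is $\pz_\ell$-generic over $L[G{\upharpoonright}z]$. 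Therefore $x_G^{(\ell,k)}$, being (by clause (2)) a real not in $L$ that is interdefinable with this generic filter, is likewise not in $L[G{\upharpoonright}z]$ — because if it were, the filter $G{\upharpoonright}\{(\ell,k)\}$ it determines would also lie in $L[G{\upharpoonright}z]$, contradicting its genericity over that model (a generic filter is never in the ground model since the forcing is non-trivial, as it adds a real). Hence $f(\ell) = x_G^{(\ell,k)} \notin L[G{\upharpoonright}z] \ni f$, contradicting $f(\ell) \in \ran(f) \subseteq L[G{\upharpoonright}z]$.

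The main obstacle I anticipate is handling the coding of $f$ as a set of ordinals (or a real) cleanly enough to invoke Lemma \ref{lemma sets of ordinals in N}, and then justifying the product-factoring/mutual-genericity argument with the finite-support $\omega$-slice product — in particular being careful that $\ell < \omega$ while $z$ contains the tail $\omega_1 \setminus \omega$, so that $\ell \notin F$ really does give $\{\ell\}\times\omega$ disjoint from $z$. Once the factoring is set up, the rest is the standard "a mutually generic real over one factor is not in the other factor's extension" fact, combined with clause (2) of $n$-slicing. Everything else (that $A_\ell$ is countable in $N$, that $A \in \mathrm{unif}\Pi^1_{n+1}$ in $N$) has already been done in Lemma \ref{lem: complexity A}, so I would cite it directly.
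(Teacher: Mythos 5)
Your proposal is correct, but it proves the key claim --- that $A$ has no choice function in $N$ --- by a genuinely different mechanism from the paper. The paper runs the classical symmetric-name argument: having located $f$ in $L[G{\upharpoonright}z]$ via Lemma \ref{lemma sets of ordinals in N}, it takes a $\pz{\upharpoonright}z$-name $\dot f$ and a condition $p\in G$ forcing $\dot f(\ell)=\dot x_G^{(\ell,k)}$, then applies the automorphism $\pi^\ast\in H_z\subseteq\sym{\dot f}$ swapping $(\ell,k)$ with some $(\ell,k')\notin\dom{p}$; since $p$ and $\pi^\ast(p)$ are compatible but force $\dot f(\ell)$ to equal the two distinct reals $x_G^{(\ell,k)}$ and $x_G^{(\ell,k')}$, the contradiction is obtained purely at the level of names and conditions. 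You instead argue semantically via the factoring $\pz\cong\pz{\upharpoonright}z\times\pz{\upharpoonright}\bigl((\omega_1\times\omega)\setminus z\bigr)$ and mutual genericity. That route works, and is arguably shorter, provided you close it with the product intersection lemma ($L[G_1]\cap L[G_2]\cap\mathcal{P}(\mathrm{Ord})=L\cap\mathcal{P}(\mathrm{Ord})$ for mutually generic $G_1,G_2$): then $x_G^{(\ell,k)}\in L[G{\upharpoonright}z]$ would give $x_G^{(\ell,k)}\in L$, directly contradicting clause (2) of the definition of $n$-slicing. Your alternative phrasing of the independence step --- recovering the filter $G{\upharpoonright}\{(\ell,k)\}$ from the real inside $L[G{\upharpoonright}z]$ and contradicting its genericity over that model --- needs slightly more than the literal clause (2), which only asserts that the real \emph{determines} the filter (an injectivity statement), not that the filter is uniformly definable from the real inside an arbitrary intermediate model; so the intersection-lemma formulation is the one to use. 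The trade-off between the two approaches: the paper's argument reuses the symmetric-extension machinery already set up (in particular $H_z\subseteq\sym{\dot f}$, which is exactly what the proof of Lemma \ref{lemma sets of ordinals in N} delivers) and needs no product-intersection fact, while yours avoids manipulating names and automorphisms at the cost of importing that standard lemma.
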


\begin{proof}
By Lemma \ref{lem: complexity A}, it suffices to show that $A$ has no choice function in $N$. Suppose there is a choice function $f:\omega \to \R$ for $A$ in $N$. Since $f$ is a countable sequence of reals, we can code $f$ as a real and so as a subset of $\omega$. By Lemma \ref{lemma sets of ordinals in N}, there is some $z \in Z$ such that $f\in L[G{\upharpoonright}z]$. Hence, there is a $\pz {\upharpoonright} z$-name $\dot{f}$ for $f$. As $z\in Z$,  there is an $\ell \in \omega$ such that $z$ does not meet any $\{\ell'\}\times \omega$ with $\ell' \geq \ell$. Let $\ell$ be minimal with that property and let $k \in \omega$ such that $f(\ell)=x^{(\ell,k)}_G$. Then there is a $p \in G$ such that $p \Vdash ``f$ is a choice function for $A" \land \dot{f}(\ell)=\dot{x}^{(\ell,k)}_G$, where $\dot{x}^{(\ell,k)}_G$ is the canonical $\pz$-name for $x^{(\ell,k)}_G$. Let $k' \neq k \in \omega$ such that $(\ell,k') \in (\{\ell\}\times \omega){\setminus}\dom{p}$ and let $\pi \in \mathrm{Aut}(\omega_1\times \omega, \omega)$ which only swaps $(\ell,k)$ and $(\ell,k')$. Then $\pi^\ast \in H_z \subseteq \sym{\dot{f}}$ and $p$ and $\pi^\ast(p)$ are compatible. But $\pi^\ast(p) \Vdash \dot{f}(\ell)=\pi^\ast(\dot{x}^{(\ell,k)}_G)=\dot{x}^{(\ell,k')}_G$, which is impossible as these are distinct.
\end{proof}

\begin{proposition}\label{proposition DC holds in N} 
In $N$, $\mathsf{DC}(\mathbb{R};\boldsymbol{\Pi}^1_n)$ holds. 
\end{proposition}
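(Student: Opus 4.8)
The plan is to reflect the given instance of dependent choice down into an intermediate model of the form $L[G{\upharpoonright}z]$, which is a genuine $\mathsf{ZFC}$-model and hence satisfies full $\mathsf{DC}$, solve it there, and transport the solution back to $N$ via the Sandwiching Lemma. Concretely, let $X \in N$ be a non-empty $\boldsymbol{\Pi}^1_n$ set of reals and let $R \in N$ be a $\boldsymbol{\Pi}^1_n$ relation that is total on $X$; fix a single real parameter $p \in N$ so that both $X$ and $R$ are $\Pi^1_n(p)$, and write $\varphi(\cdot,p)$ for a $\Pi^1_n(p)$ formula defining $X$ (we abbreviate by $x\mathrel{R}y$ the analogous formula for $R$). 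Coding $p$ as a set of ordinals and applying Lemma \ref{lemma sets of ordinals in N}, there is $z \in Z$ with $p \in M := L[G{\upharpoonright}z]$; since $\pz{\upharpoonright}z$ is a set forcing, $M \models \mathsf{ZFC}$, and $M \subseteq N \subseteq L[G]$. The crucial observation is that $Z$ is an \emph{unbounded} set of slices --- each $z' = (F \cup (\omega_1{\setminus}\omega))\times\omega \in Z$ has $\{\nu : \{\nu\}\times\omega \subseteq z'\} \supseteq \omega_1{\setminus}\omega$ unbounded in $\omega_1$ --- so, since $\pz$ is $(n+1)$-slicing, the Sandwiching Lemma (Lemma \ref{lemma n-absoluteness}) applies to $\mathcal{S}=Z$: every $\Sigma^1_{n+1}$ formula with real parameters in $M$ is absolute between $M$, $L[G]$ and $N$, and a fortiori every $\Sigma^1_n$ and $\Pi^1_n$ formula with real parameters in $M$ is absolute between $M$ and $N$.

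I would then check that $X$ and $R$, interpreted in $M$, still form a valid instance of $\mathsf{DC}$ there. Non-emptiness of $X$ is the sentence $\exists x\,\varphi(x,p)$, which is $\Sigma^1_{n+1}(p)$ and true in $N$, hence true in $M$. For totality, take any real $x$ of $M$ with $M \models \varphi(x,p)$; then $N \models \varphi(x,p)$ by $\Pi^1_n$-absoluteness, so totality of $R$ in $N$ yields $N \models \exists y\,(\varphi(y,p)\wedge x\mathrel{R}y)$, and this is a $\Sigma^1_{n+1}(p,x)$ statement with parameters in $M$, so it holds in $M$ as well --- i.e.\ $x$ has an $R$-successor lying in $X$ already inside $M$ (and $\Pi^1_n$-absoluteness also gives $R^M \subseteq X^M\times X^M$). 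Thus in $M$ we have a non-empty $X$ together with a total relation $R$ on it; since $M \models \mathsf{ZFC}$ and hence $\mathsf{DC}$, there is a sequence $\langle x_i : i\in\omega\rangle \in M$ with $M \models \varphi(x_i,p)\wedge x_i\mathrel{R}x_{i+1}$ for all $i$.

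Finally, this sequence lies in $M \subseteq N$, and the assertion ``$\forall i\,(\varphi(x_i,p)\wedge x_i\mathrel{R}x_{i+1})$'' is a $\Pi^1_n$ property of the real coding the sequence, with parameter $p\in M$, hence absolute between $M$ and $N$ by the Sandwiching Lemma; so $\langle x_i : i\in\omega\rangle$ witnesses $\mathsf{DC}(\mathbb{R};\boldsymbol{\Pi}^1_n)$ for $X$ and $R$ in $N$, and since $X,R$ were arbitrary we are done. The step I expect to be the crux is the complexity bookkeeping in the second paragraph: totality of an $n$-th level relation is $\Sigma^1_{n+1}$ rather than higher, which is precisely why the $(n+1)$-absoluteness guaranteed by $\pz$ being $(n+1)$-slicing is both applicable and sufficient here, and why mere $\Sigma^1_n$-absoluteness would not do.
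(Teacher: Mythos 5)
Your proposal is correct and follows essentially the same route as the paper's own proof: reflect the parameter into $L[G{\upharpoonright}z]$ via Lemma \ref{lemma sets of ordinals in N}, transfer totality down using the $\Sigma^1_{n+1}$-absoluteness from the Sandwiching Lemma (this is exactly the paper's formula $\chi$), invoke $\mathsf{ZFC}$ in $L[G{\upharpoonright}z]$ to produce the sequence, and push it back up by absoluteness. Your version is merely more explicit about the bookkeeping (unboundedness of $Z$, non-emptiness of $X$, and $R\subseteq X\times X$ in the intermediate model), all of which the paper leaves implicit.
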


\begin{proof}
Let $X \subseteq \R$ be a $\mathbf{\Pi}^1_{n}$ set of reals in $N$ and let $R \subseteq (\R)^2$ be a total relation such that $R$ is $\mathbf{\Pi}^1_{n}$ in $N$. Then there are $\Pi^1_{n}$ formulas $\varphi$ and $\psi$ with real-parameter $a$ and $b$ in $N$ which define $X$ and $R$ in $N$, respectively. We can assume $a=b$. By Lemma \ref{lemma sets of ordinals in N}, there is a $z\in Z$ such that $a \in L[G{\upharpoonright}z]$. Let $X_\varphi$ and $R_\psi$ be the sets defined by $\varphi(a)$ and $\psi(a)$ in $L[G{\upharpoonright}z]$, and let $\chi(x,a)$ be the formulate $\varphi(x,a)\to\exists x'\psi((x,x'),a)$. Then $\chi(x,a)$ is $\Sigma^1_{n+1}$ and so absolute by Lemma \ref{lemma n-absoluteness}. By downwards-absoluteness, for every $x$,$ \chi(x,a)$ is true in $L[G{\upharpoonright}z]$.  As usual $L[G{\upharpoonright}z] \models \ZFC$, so there is a sequence $\langle x_i : i \in \omega \rangle$ in $L[G{\upharpoonright}z]$ such that $x_i \mathrel{R_\psi} x_{i+1}$ for every $i \in \omega$. By Lemma \ref{lemma V[G|z] subset N}, $\langle x_i : i \in \omega \rangle \in N$, and $\varphi(a)$ and $\psi(a)$ are absolute between $L[G {\upharpoonright}z]$ and $N$, so $x_i \mathrel{R} x_{i+1}$ in $N$ for every $n\in \omega$.  
\end{proof}

This finishes the proof of Theorem \ref{thm:main}. In the classical case, when aiming to construct a model of $\ZF + \neg\AC_\omega(\R;\mathbf{Ctbl})$, we construct a symmetric extension from an $\omega\times\omega$-product, where we fix finitely many rows from the $\omega \times \omega$-matrix of the product. Here, we modify this construction by instead using an $\omega_1\times\omega$-product. The extra $\omega_1$-many rows are used to prove Theorem \ref{theorem existence absolute for slices forcing notion} (via the Kanovei-Lyubetsky Lemma, Lemma \ref{theorem aabsoluteness}). We then `ignore' the later reals (i.e.\ those added by the $\alpha \times k$-component for $\alpha \in \omega_1\backslash\omega$). This motivates our choice of a filter, $\mathcal{F_S}$, which contains subgroups of permutations which can move any finite initial segment of $\omega_1$, but fix everything beyond $\omega$. Hence $A$ is then the countable $\omega\times\omega$-sized bottom corner of the $\omega_1 \times\omega$ grid, formed by `chopping off' the end $\omega_1\backslash \omega$ tail of each $\omega_1$-length sequence of reals. 

\section{Jensen forcing and $n$-Jensen forcing}\label{sec: Jensen}

\subsection{Jensen forcing}\label{section jensen forcing}

The aim of Section \ref{sec: Jensen} is to prove Theorem \ref{theorem existence absolute for slices forcing notion}, i.e.\ to prove that for every $n \geq 2$, there is an $n$-slicing forcing notion in $L$. In preparation for this, here we briefly introduce Jensen forcing, which we will then generalise in Section \ref{section n Jensen forcing} to construct $n$-slicing forcing notions. Our presentation and notation are based on that of \cite{friedman2018model}.

Jensen forcing was originally introduced by Jensen in \cite{jensen} (see also \cite[\S28]{jech2003set}) to construct a model with especially simple non-constructible reals. After Cohen's totemic proof of the consistency of $\ZFC$ with the existence of non-constructible reals, it was naturally asked how simple these non-constructible reals could be. Shoenfield's absoluteness theorem showed that the minimum possible complexity was $\mathbf{\Delta}^0_3$. Jensen used Jensen forcing to construct such a model, where there was a $\mathbf{\Delta}^0_3$ real, with the minimal non-trivial degree of constructibility (if $x \in L[a]$ and $a \not\in L[x]$, then $x \in L$). We are interested in Jensen forcing because its generic reals are unique (i.e.\ there is only one Jensen real in the extension) and the set of its generic reals over $L$ is $\Pi^1_2$. Therefore, products of Jensen forcing can be used to add a set of complexity $\Pi^1_2$ in a controlled way. To be able to add sets of higher complexity, we generalise Jensen's construction of Jensen forcing in Section \ref{section n Jensen forcing}.

Before we can begin with the construction of Jensen forcing, we need some additional notation for trees. Let $\sz$ be Sacks forcing, let $T \in \sz$, and let $t \in T$. We define $T_t := \{s \in T : s \subseteq t \lor t \subseteq s\}$. Then $T_t \in \sz$ and every node in $T_t$ is compatible with $t$.  Let $\cz:=\{(2^{<\omega})_t : t \in 2^{<\omega}\}$, ordered by inclusion, be the arboreal version of Cohen forcing. The \emph{meet} of two perfect trees, $S$ and $T$, denoted $S \land T$, is the largest perfect tree contained in $S \cap T$, if it exists (see \cite[pg. 5]{friedman2018model}). We say a subset $P$ of $\sz$ is a \emph{perfect poset} if, $\cz \subseteq P$,  $P$ is closed under finite unions, and for every $S,T \in P$, if $S$ and $T$ are compatible in $\sz$, then $S \land T \in P$.  If $P$ is a perfect poset, we order $P \times \omega$ by 
\begin{align*}
    (S,n)\leq (T,m) :\iff S \leq T \land m \leq n \land 2^m \cap S = 2^m \cap T.
\end{align*}
We denote the poset $(P \times \omega, \leq)$ by $\qz(P)$.

\begin{definition}[Jensen's Operation, $P^H$]\label{definition Jensen star operation}
Let $M$ be a countable transitive model (ctm) of $\ZFC^-+ `` \mathcal{P}(\omega)$ exists$"$, let $P \in M$ be a perfect poset, let $\qz(P)^{<\omega}$ be the $\omega$-product of $\qz(P)$ with finite support, let $H$ be a $\qz(P)^{<\omega}$-generic filter over $M$, and let $\langle \mathcal{T}_k : k \in \omega \rangle$ be the sequence of generic trees added by $H$. We define $P^H$ as the closure of 
\begin{align*}
    P\cup \{\mathcal{T}_k \land S : S \in P \text{, } k \in \omega \text{, and } \mathcal{T}_k \land S \neq \emptyset\}
\end{align*}
under finite unions, and order $P^H$ by inclusion.
\end{definition}

Jensen's operation expands $P$ to a new poset $P^H$, which has many nice properties. Crucially, predense subsets of $P$ remain predense in $P^H$:

\begin{lemma}\label{lemma facts Jensen forcing}
Let $M$, $P$, $H$, and $\langle \mathcal{T}_k : k \in \omega \rangle$ be as in Definition \ref{definition Jensen star operation}. Then:
\begin{enumerate}
    \item $P^H$ is a perfect poset.
    \item $\{\mathcal{T}_k : k \in \omega\}$ is a maximal antichain in $P^H$.
    \item Every predense set $D \subseteq P$ in $M$ remains predense in $P^H$.
\end{enumerate}
Let $P^{<\omega}$ and $(P^H)^{<\omega}$ be the finite support $\omega$-product of $P$ and $P^H$, respectively. Then:
\begin{enumerate}
    \item[4.] Every predense set $D \subseteq  P^{<\omega}$ in $M$ remains predense in $(P^H)^{<\omega}$.
\end{enumerate}
\end{lemma}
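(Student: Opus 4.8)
The plan is to prove the four statements roughly in order, with parts 1--3 essentially following from the standard theory of Jensen's operation as presented in \cite{friedman2018model}, and part 4 being the genuinely new piece we must verify for the finite-support product. Throughout I will use that $\qz(P)^{<\omega}$ is, by a standard density argument, forcing-equivalent to a product whose generic object is the sequence $\langle \mathcal{T}_k : k\in\omega\rangle$, and that each $\mathcal{T}_k$ is a perfect tree with $2^m\cap\mathcal{T}_k$ determined by the finitely-many conditions already decided. For part 1, I would check the three closure requirements for being a perfect poset directly: $\cz\subseteq P\subseteq P^H$ is immediate; closure under finite unions is built into the definition; and closure under meets (when defined) follows because a meet $U\wedge U'$ of two elements of $P^H$, each a finite union of trees of the form $S$ or $\mathcal{T}_k\wedge S$, distributes over the unions and each pairwise meet $(\mathcal{T}_k\wedge S)\wedge(\mathcal{T}_{k'}\wedge S')$ is again $\mathcal{T}_k\wedge\mathcal{T}_{k'}\wedge(S\wedge S')$, which lands in $P^H$ using genericity to see that $\mathcal{T}_k\wedge\mathcal{T}_{k'}=\emptyset$ for $k\ne k'$ (the $\mathcal{T}_k$'s are pairwise ``almost disjoint'' as trees by a density argument). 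This last point also gives part 2: the $\mathcal{T}_k$ form an antichain, and maximality follows since any $U\in P^H$ is a finite union involving some $S\in P$ or some $\mathcal{T}_k\wedge S$, and by genericity every $S\in P$ has $S\wedge\mathcal{T}_k\ne\emptyset$ for infinitely many $k$, so $U$ is compatible with some $\mathcal{T}_k$.

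For part 3, the key is the familiar genericity-of-the-$\mathcal{T}_k$ argument: fix a predense $D\subseteq P$ in $M$ and an arbitrary $U\in P^H$; write $U$ as a finite union of trees $S_i\in P$ and trees $\mathcal{T}_{k_j}\wedge S_j'$. For each $S_i$, predensity of $D$ in $P$ gives some $R_i\in D$ compatible with $S_i$; for each $\mathcal{T}_{k_j}\wedge S_j'$ I would use a density argument in $\qz(P)^{<\omega}$: the set of conditions forcing $\mathcal{T}_{k_j}$ to meet (below some node decided by the finite stem) some fixed element of $D$ compatible with $S_j'$ is dense, because $D$ is predense in $P$ and the generic tree $\mathcal{T}_{k_j}$ can always be thinned into such an element. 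Putting these together yields an element of $P^H$ below $U$ and compatible with $D$, so $D$ remains predense.

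Part 4 is where I expect the real work to be, and it is the reason the lemma is stated separately for the product. Given a predense $D\subseteq P^{<\omega}$ in $M$ and an arbitrary condition $\bar U=(U_0,\dots,U_{\ell-1})\in (P^H)^{<\omega}$, I want to find $\bar U'\le\bar U$ in $(P^H)^{<\omega}$ compatible with some element of $D$. The natural approach is coordinate-by-coordinate: expand each $U_c$ as a finite union of ``pure'' pieces (elements of $P$ or $\mathcal{T}_k\wedge S$), and reduce to the case where each $U_c$ is a single such piece; then run the $\qz(P)^{<\omega}$-genericity argument simultaneously across the finitely many coordinates, using that the generic trees appearing in different coordinates (and with different indices $k$) are mutually generic, so one can thin all of them at once into a single condition of $D$. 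The main obstacle is bookkeeping the interaction between the finite support of the outer product $P^{<\omega}$, the finite support of the inner product $\qz(P)^{<\omega}$ defining $H$, and the finitely many trees $\mathcal{T}_k$ that actually occur in $\bar U$ — one must argue that only finitely much generic information is relevant and that a single condition in $\qz(P)^{<\omega}$ can be chosen to decide all of it in a way compatible with a fixed element of $D$. Once this simultaneous thinning is set up correctly, predensity of $D$ in $P^{<\omega}$ supplies the element of $D$ to aim for, and the argument closes exactly as in part 3 but with the extra index for the product coordinate.
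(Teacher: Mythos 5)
The paper does not actually prove this lemma: its ``proof'' is the single line ``See \cite[Propositions 2.4 \& 2.5]{friedman2018model}'', so the comparison is really with that cited source, whose arguments your sketch reconstructs in outline, and correctly so. Parts 1--3 are fine as plans (pairwise disjointness of the $[\mathcal{T}_k]$ by a density argument, reduction to single summands of the finite unions, and thinning the generic trees into $D$), but two points need tightening before the sketch becomes a proof. First, in part 3 your dense set should not aim at a \emph{fixed} element of $D$: when you shrink a condition $(T,n)$ in coordinate $k_j$ you cannot yet know which level-$n$ node of $\mathcal{T}_{k_j}$ will survive the meet with $S_j'$, so the correct dense set consists of conditions with $q(k_j)=(T,n)$ such that for \emph{every} $s\in 2^n\cap T$ the tree $T_s$ lies below some node-dependent element of $D$; compatibility with $D$ is then read off afterwards from whichever $s$ happens to lie in $\mathcal{T}_{k_j}\wedge S_j'$. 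Second, in part 4 the bookkeeping you defer is resolved by the same universal quantifier: fix one common level $n$ and require that for \emph{every} assignment of level-$n$ nodes $s_c$ to the coordinates $c$ in the support of $\bar U$, the tuple of restricted trees $(T_{j(c)})_{s_c}\wedge S_c$ (computable in $M$, since all the $T$'s and $S_c$'s lie in $P$) extends a member of $D$; this is a finite conjunction of instances of predensity of $D$ in $P^{<\omega}$, hence dense, and it automatically covers the case your sketch does not mention, namely one and the same $\mathcal{T}_k$ occurring in several coordinates of $\bar U$. With those adjustments your route is exactly that of the cited propositions; there is no genuinely different idea, only more detail than the paper chose to include.
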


\begin{proof}
See \cite[Propositions 2.4 \& 2.5]{friedman2018model}.
\end{proof}

Nowadays, Jensen forcing is usually constructed using a $\diamondsuit$-sequence (cf.\ e.g.\ \cite[Theorem 28.1]{jech2003set}). However, when Jensen forcing was introduced, the $\diamondsuit$-principle had not been defined yet and so Jensen had to do the construction by hand. We follow Jensen's approach and do not use a $\diamondsuit$-sequence because then the construction is easier to generalise. Jensen forcing is defined as the union of a sequence $\langle P_\xi : \xi < \omega_1 \rangle$ of perfect posets. We define the sequence recursively in $L$: let $P_0$ be the closure of $\mathbb{C}$ under finite unions and let $\gamma_0$ be the least countable ordinal such that $L_{\gamma_0}$ is a model of $\mathsf{ZFC}^- + ``\mathcal{P}(\omega)$ exits'' containing $P_0$. We assume that for $\xi' < \xi$, we have already defined a pair $(L_{\gamma_{\xi'}} , P_{\xi'})$ such that $L_{\gamma_{\xi'}}$ is a model of $\mathsf{ZFC}^- + ``\mathcal{P}(\omega)$ exits'' and $P_{\xi'}$ is a perfect poset in $L_{\gamma_{\xi'}}$. If $\xi$ is a limit ordinal, then we set $P_\xi := \bigcup_{\xi'< \xi} P_{\xi'}$. If otherwise $\xi =\xi'+1$, we set $P_\xi := (P_{\xi'})^H$, where $H$ is the $<_L$-least $\mathbb{Q}(P_{\xi'})^{<\omega}$-generic filter over $L_{\gamma_{\xi'}}$. In both cases, let $\gamma_\xi$ be the least countable ordinal such that $P_{\xi} \in L_{\gamma_{\xi}}$ and $\mathbb{R} \cap L_{\gamma_{\xi}+1} \nsubseteq L_{\gamma_{\xi}}$. Finally, we define Jensen forcing as $\mathbb{J}:=\bigcup_{\xi < \omega_1} P_\xi$. Note that for every $\mathbb{J}$-generic filter $G$, $\bigcap \{[T] : T \in G\}$ is a real. We call such reals \emph{Jensen reals}.

\begin{theorem}[Jensen, \cite{jensen}]\label{theorem jensen original}
Jensen forcing is ccc, the set of Jensen reals over $L$ is $\Pi^1_2$ in every inner model of $\mathsf{ZF}$ containing $L$, and for every $\mathbb{J}$-generic filter $G$ over $L$, the set of Jensen reals in $L[G]$ is a singleton.
\end{theorem}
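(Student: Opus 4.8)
The plan is to establish the three assertions separately. The combinatorial engine throughout is Lemma~\ref{lemma facts Jensen forcing}, especially clause~3 (predense subsets of $P_\xi$ lying in $L_{\gamma_\xi}$ stay predense all the way up to $\mathbb{J}$), together with G\"odel condensation and the observation that the recursion defining $\langle P_\xi : \xi < \omega_1\rangle$ can be run inside a fixed finite fragment $\ZFCM$ of $\ZFC^-+V=L$, which moreover is chosen strong enough to prove the ccc of Jensen forcing; the auxiliary ordinals $\gamma_\xi$ are arranged so that $L_{\gamma_\xi}\models\ZFCM$ for every $\xi$.

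\emph{Ccc.} Let $A$ be a maximal antichain of $\mathbb{J}$. By a Skolem-hull-and-condensation argument in $L$ (using the specific choice of the $\gamma_\xi$), there is a club of $\xi<\omega_1$ with the property that $A\cap P_\xi\in L_{\gamma_\xi}$ and $A\cap P_\xi$ is a maximal antichain of $P_\xi=\mathbb{J}^{L_{\gamma_\xi}}$ there. Fix such a $\xi$. By clause~3 of Lemma~\ref{lemma facts Jensen forcing}, $A\cap P_\xi$ is predense in $P_{\xi+1}=(P_\xi)^H$; a routine induction along the construction (unions of predense sets remain predense at limits) shows $A\cap P_\xi$ is predense in every $P_\eta$ with $\eta\geq\xi$, hence in $\mathbb{J}$. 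Since $A$ is an antichain containing the predense set $A\cap P_\xi$, we get $A=A\cap P_\xi$, which is countable.

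\emph{The $\Pi^1_2$ definition.} I claim that for every inner model $W\models\ZF$ with $L\subseteq W$, a real $x\in W$ is a Jensen real over $L$ if and only if for every countable transitive $M\models\ZFCM$ the filter $\{T\in\mathbb{J}^M : x\in[T]\}$ is $\mathbb{J}^M$-generic over $M$. Since $\ZFCM$ contains $V=L$, condensation forces every such $M$ to be some $L_\gamma$, so $\mathbb{J}^M$ is an initial segment $P_{\omega_1^M}$ of $\mathbb{J}$. For ($\Leftarrow$): every maximal antichain of $\mathbb{J}$ in $L$ is countable by the ccc just proved, hence by the reflection argument above appears inside some $L_{\gamma_\xi}\models\ZFCM$ as a maximal antichain of $\mathbb{J}^{L_{\gamma_\xi}}=P_\xi$, so genericity over all such $M$ makes $x$ meet it. For ($\Rightarrow$): if $M=L_\gamma\models\ZFCM$ then $M$ believes $\mathbb{J}^M$ is ccc, so every maximal antichain of $\mathbb{J}^M$ in $M$ is countable in $M$ and, running $M$'s own reflection argument, lies in some $L_{\gamma_\eta}^M=L_{\gamma_\eta}$ with $\eta<\omega_1^M$ as a maximal antichain of $P_\eta$; by clause~3 of Lemma~\ref{lemma facts Jensen forcing} it is predense in $\mathbb{J}$, so a Jensen real over $L$ meets it. Finally, ``the real $w$ codes a wellfounded model of $\ZFCM$'' is $\Pi^1_1$ in $w$, ``$x$ is generic over the model coded by $w$'' is arithmetic in $x,w$, and the displayed condition is a universal real quantifier over a $\Sigma^1_1$ matrix, hence $\Pi^1_2$; since condensation is absolute, the same lightface formula defines the Jensen reals in every $W$.

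\emph{The singleton property.} Let $G$ be $\mathbb{J}$-generic over $L$ and $x_G:=\bigcap\{[T]:T\in G\}$ the canonical Jensen real, so $G=\{T\in\mathbb{J}:x_G\in[T]\}$. For each $\xi$, the trees $\langle\mathcal{T}^\xi_k:k\in\omega\rangle$ adjoined at stage $\xi+1$ form a maximal antichain of $\mathbb{J}$, so any $\mathbb{J}$-generic filter meets $\{\mathcal{T}^\xi_k:k\}$ in exactly one tree, which defines for each Jensen real $z$ a ``thread'' $\xi\mapsto k_z(\xi)$; one checks that $z$ and its thread determine each other given the $L$-construction. Now suppose $y\in L[G]$ is a Jensen real with $y\neq x_G$; then the threads of $x_G$ and $y$ disagree at some stage $\xi_0$, so $x_G$ and $y$ lie on two incompatible trees $\mathcal{T}^{\xi_0}_{k_G(\xi_0)}\neq\mathcal{T}^{\xi_0}_{k_y(\xi_0)}$ of the same stage-$\xi_0$ antichain, trees adjoined mutually generically over $M_0:=L_{\gamma_{\xi_0}}$ via $\mathbb{Q}(P_{\xi_0})^{<\omega}$. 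Being Jensen-generic over $L$, $y$ is generic for $\mathbb{J}$ restricted below $\mathcal{T}^{\xi_0}_{k_y(\xi_0)}$ over $L$, whereas $x_G$ carries only the generic information living below the incompatible tree $\mathcal{T}^{\xi_0}_{k_G(\xi_0)}$; a mutual-genericity argument (two mutually generic reals do not lie in each other's generic extensions) then contradicts $y\in L[x_G]=L[G]$. I expect this last step to be the main obstacle: one must verify carefully that the ``below $\mathcal{T}^{\xi_0}_{k_y(\xi_0)}$'' generic data required by the genericity of $y$ over $L$ is genuinely independent of $x_G$ over $L$ — this is Jensen's minimal-degree-of-constructibility phenomenon, and it is precisely where clauses~3 and~4 of Lemma~\ref{lemma facts Jensen forcing} (preservation of predensity, including in the finite-support product) do the real work; the reflection arguments for the other two parts are routine by comparison.
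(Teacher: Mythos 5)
The paper does not prove this theorem; it is quoted from Jensen \cite{jensen} (see also \cite[Theorem 28.1]{jech2003set} and \cite{friedman2018model}), so there is no in-text proof to compare against. Judged on its own terms, your treatment of the first two assertions is essentially the standard argument and is fine in outline: the ccc via condensation plus clause~3 of Lemma~\ref{lemma facts Jensen forcing}, and the $\Pi^1_2$ definition via ``generic over every suitable countable level of $L$'', with the correct complexity count (this is exactly the shape of Proposition~\ref{proposition characterization Jensen generics} and Corollary~\ref{cor Jensen complexity set of generics} later in the paper).

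The third part, however, has a genuine gap, and it is located exactly where you suspect. The ``mutual genericity'' step does not work as stated: the trees $\mathcal{T}^{\xi_0}_k$ are all adjoined by a \emph{single} $\qz(P_{\xi_0})^{<\omega}$-generic filter over $L_{\gamma_{\xi_0}}$, so two distinct members of that antichain are not mutually generic objects in any sense you can use, and the conclusion you want to contradict --- $y\in L[x_G]$ --- cannot be refuted by a disjointness-of-generic-extensions argument, since $y$ really does lie in $L[G]=L[x_G]$ by hypothesis. The correct mechanism is a density argument about \emph{names}, not about reals: if $\dot y$ is a $P_\xi$-name in $L_{\gamma_\xi}$ with $1\Vdash \dot y\neq \dot x_G$, then in $L_{\gamma_{\xi+1}}$ the conditions forcing $\dot y\notin[\mathcal{T}^\xi_k]$ are dense in $P_{\xi+1}$ (this is the single-forcing case of Lemma~\ref{lemma uniqueness jensen generics}, i.e.\ \cite[Theorem 3.1]{friedman2018model}); this dense set persists to $\mathbb{J}$ by Proposition~\ref{proposition Jensen predense sets remain predense}, so $y=\dot y_G$ avoids every $[\mathcal{T}^\xi_k]$, contradicting the fact that every Jensen real meets the stage-$\xi$ antichain (Proposition~\ref{proposition characterization Jensen generics}). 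The point your sketch misses is that the contradiction is obtained \emph{before} the trees are adjoined, by exploiting that $\dot y$ already lives in $L_{\gamma_\xi}$ and that $H_\xi$ is generic over a model containing $\dot y$; no appeal to mutual genericity of the two reals is needed or available. You would need to reflect an arbitrary name $\dot y\in L$ for a second Jensen real down to some $L_{\gamma_\xi}$ (another condensation step) and then run this density argument to close the gap.
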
 

\subsection{$n$-Jensen forcing notions}\label{section n Jensen forcing}

In this section, we define a certain iteration of Jensen's operation, a \textit{Jensen-sequence}, and then prove that each forcing notion associated to a Jensen-sequences, which we call a \emph{Jensen-like forcing notion}, exhibits generalisations of the properties Jensen forcing.  Then we construct a forcing notion to prove Theorem \ref{theorem existence absolute for slices forcing notion}, using  an analogue of \cite[Theorem 13]{KanoveiLyubetsky}, which we call the \emph{Kanovei-Lyubetsky Lemma} (Lemma \ref{theorem aabsoluteness}) by using a witnessing property for the underlying Jensen-sequence, \emph{$n$-completeness}, to ensure that the products of resulting Jensen-like forcing notions are $n$-absolute for slices, and hence can be used to separate the required fragments of choice.

First, we generalise Jensen's construction from Section \ref{section jensen forcing}. 

\begin{definition}
Let $\zeta \leq \omega_1$. We say $\langle(L_{\gamma_\xi},P_\xi) : \xi < \zeta \rangle$ is a \emph{Jensen-sequence} if, for every $\xi < \zeta$,
\begin{enumerate}
    \item $P_0$ is the closure of $\cz$ under finite unions,
    \item $\gamma_\xi$ is a countable ordinal such that $L_{\gamma_\xi} \models \ZFC^{-} +``\mathcal{P}(\omega)$ exists'' and $\mathbb{R} \cap \mathrm{L}_{\gamma_\xi+1} \nsubseteq \mathrm{L}_{\gamma_\xi}$,
    \item $P_\xi$ is a perfect poset in $L_{\gamma_\xi}$,
    \item if $\xi = \zeta +1$, then there is a $\qz(P_\zeta)^{<\omega}$-generic filter $H_\zeta \in L_{\gamma_{\xi}}$ over $L_{\gamma_\zeta}$ such that $P_\xi = (P_\zeta)^{H_\zeta}$, and
    \item $\langle P_\xi : \xi \in\zeta \rangle$ is continuous at limits, i.e.\ if $\xi$ is a limit, then $P_\xi = \bigcup_{\xi' < \xi}P_{\xi'}$.
\end{enumerate}
\end{definition}

As in the construction of Jensen forcing in Section \ref{section jensen forcing}, we can construct Jensen-sequences of length $\omega_1$ by recursion. Whilst Jensen uses the $<_L$-least generic filter, we consider \textit{all} possible forcing notions which are defined by Jensen-sequences, using any generic filter. We call a forcing notion $\pz$ \emph{Jensen-like} if there is a Jensen-sequence $\langle(L_{\gamma_\xi},P_\xi) : \xi < \omega_1 \rangle$ such that $\pz:= \bigcup_{\xi < \omega_1} P_\xi$. Note that, as with Jensen forcing, if $G$ is a generic filter for $\pz$, then $\bigcap \{[T] : T \in G\}$ is a real. We call such reals \emph{$\pz$-generic reals}.

We show that the properties of Jensen-like forcing notions generalise those of Jensen forcing in Theorem \ref{theorem jensen original}. Obviously, predense sets remain predense at successor steps along a Jensen-sequence as those steps are instances of Jensen's operation; moreover predense sets remain predense at limit steps and for the union of the sequence, and suitable products along a Jensen-sequence are ccc:

\begin{proposition}\label{proposition Jensen predense sets remain predense}
If $\pz$ is a Jensen-like forcing notion, then for every $\xi < \omega$, every predense set $D \subseteq P_\xi$ (or $P_\xi^{<\omega}$) in $L_{\gamma_\xi}$ is predense in $\pz$ (or $\pz^{<\omega}$).
\end{proposition}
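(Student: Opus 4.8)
The plan is to prove this by induction along the Jensen-sequence $\langle(L_{\gamma_\xi},P_\xi) : \xi < \omega_1\rangle$ witnessing that $\pz$ is Jensen-like, simultaneously for the one-dimensional posets $P_\xi$ and their finite-support $\omega$-products $P_\xi^{<\omega}$. Fix $\xi < \omega_1$ and a predense $D \subseteq P_\xi$ (resp.\ $D \subseteq P_\xi^{<\omega}$) lying in $L_{\gamma_\xi}$; the goal is predensity in $\pz = \bigcup_{\eta<\omega_1}P_\eta$ (resp.\ in $\pz^{<\omega}$). Since predensity is a statement about compatibility with elements of $\pz$, and every element of $\pz$ lies in some $P_\eta$ with $\eta \geq \xi$, it suffices to show by induction on $\eta \in [\xi,\omega_1)$ that $D$ remains predense in $P_\eta$ (resp.\ $P_\eta^{<\omega}$), noting that $D \in L_{\gamma_\xi} \subseteq L_{\gamma_\eta}$ because the $\gamma_\eta$ are increasing and each $L_{\gamma_\eta}$ is transitive containing $P_\xi$.

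The base case $\eta = \xi$ is trivial. For the successor step $\eta = \eta'+1$, by clause (4) of the definition of Jensen-sequence we have $P_\eta = (P_{\eta'})^{H_{\eta'}}$ for some $\qz(P_{\eta'})^{<\omega}$-generic $H_{\eta'} \in L_{\gamma_\eta}$ over $L_{\gamma_{\eta'}}$. By the induction hypothesis, $D$ is predense in $P_{\eta'}$ (resp.\ $P_{\eta'}^{<\omega}$), and $D \in L_{\gamma_{\eta'}}$; so parts 3 and 4 of Lemma \ref{lemma facts Jensen forcing}, applied with $M = L_{\gamma_{\eta'}}$, $P = P_{\eta'}$, $H = H_{\eta'}$, give that $D$ remains predense in $P_\eta = (P_{\eta'})^{H_{\eta'}}$ (resp.\ in $(P_{\eta'})^{H_{\eta'}}{}^{<\omega} = P_\eta^{<\omega}$). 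For the limit step, clause (5) gives $P_\eta = \bigcup_{\eta'<\eta} P_{\eta'}$, and correspondingly $P_\eta^{<\omega} = \bigcup_{\eta'<\eta} P_{\eta'}^{<\omega}$ since every condition in the finite-support product has finite domain with entries appearing at some bounded stage; so any $q \in P_\eta$ (resp.\ $P_\eta^{<\omega}$) already lies in some $P_{\eta'}$ (resp.\ $P_{\eta'}^{<\omega}$) with $\xi \leq \eta' < \eta$, where $D$ is predense by the induction hypothesis, and hence $q$ is compatible with an element of $D$. This closes the induction, and taking the union over all $\eta<\omega_1$ yields predensity in $\pz$ and $\pz^{<\omega}$.

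The one point requiring care — and the only real obstacle — is verifying that Lemma \ref{lemma facts Jensen forcing} genuinely applies at each successor stage, i.e.\ that the hypotheses ``$M$ is a ctm of $\ZFC^- + ``\mathcal{P}(\omega)$ exists$"$'' and ``$P \in M$ is a perfect poset'' hold with $M = L_{\gamma_{\eta'}}$ and $P = P_{\eta'}$. Countability and transitivity of $L_{\gamma_{\eta'}}$ are immediate since $\gamma_{\eta'} < \omega_1$; the model-of-$\ZFC^-$-with-powerset-of-$\omega$ requirement is clause (2), and the perfect-poset requirement is clause (3), of the definition of Jensen-sequence. The genericity of $H_{\eta'}$ over $L_{\gamma_{\eta'}}$ is clause (4). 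So all hypotheses are supplied directly by the definition, and the induction goes through. (One should also note the harmless typo-level point that the statement says ``for every $\xi < \omega$'' where ``$\xi < \omega_1$'' is meant, matching its later uses; the argument above works verbatim for all $\xi < \omega_1$.)
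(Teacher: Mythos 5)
Your proof is correct and takes essentially the same approach as the paper: the paper phrases the argument as a minimal-counterexample contradiction (the least stage $\zeta$ at which $D$ fails to be predense must be a successor, contradicting Lemma \ref{lemma facts Jensen forcing}), which is just the contrapositive of your direct induction with the identical successor step (parts 3 and 4 of Lemma \ref{lemma facts Jensen forcing}) and limit step (continuity of the sequence). Your verification of the hypotheses of Lemma \ref{lemma facts Jensen forcing} and your observation that ``$\xi<\omega$'' in the statement should read ``$\xi<\omega_1$'' are both sound.
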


\begin{proof}
Let $\langle(L_{\gamma_\xi},P_\xi) : \xi < \omega_1 \rangle$ be a Jensen-sequence such that $\pz:= \bigcup_{\xi < \omega_1} P_\xi$. Suppose that there is some $p \in \pz$ where for all $d \in D$, $d \bot p$. By definition, there is some $\xi < \omega_1$ such that $p \in P_\xi$. Then $D$ is not predense in $P_\xi$. Let $\zeta$ be minimal such that $\xi < \zeta$ and $D$ is not predense in $P_{\zeta}$. Then $\zeta$ is a successor. Since $\xi < \zeta$, $D \in L_{\gamma_\xi} \subseteq L_{\gamma_{\zeta-1}}$. But this contradicts Lemma \ref{lemma facts Jensen forcing}. The case $D \subseteq P_\xi^{<\omega}$ is similar.
\end{proof}

\begin{proposition}\label{proposition omega-product jensen ccc}
If $\pz$ is a Jensen-like forcing notion, then the $\omega$-product $\pz^{<\omega}$ with finite support is ccc in $L$.
\end{proposition}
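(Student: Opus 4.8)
The plan is to prove this by a ccc-preservation argument along the Jensen-sequence, combining Proposition \ref{proposition Jensen predense sets remain predense} with the standard $\Delta$-system machinery, but the delicate point is that a single uncountable antichain in $\pz^{<\omega}$ need not live at any bounded stage $P_\xi^{<\omega}$. So first I would observe that $\pz^{<\omega}$ is the union, over $\xi < \omega_1$, of the posets $P_\xi^{<\omega}$, which form a continuous increasing chain (continuity at limits coming from clause (5) of the definition of a Jensen-sequence). Suppose towards a contradiction that $\{p_\alpha : \alpha < \omega_1\}$ is an antichain in $\pz^{<\omega}$. Since $\operatorname{cf}(\omega_1) = \omega_1$ and each $p_\alpha$ (being a finite partial function into $\pz = \bigcup_\xi P_\xi$) lies in some $P_{\xi(\alpha)}^{<\omega}$, by Fodor's lemma (or simply by a pigeonhole on a club) I can find a stationary — hence uncountable — set $S \subseteq \omega_1$ and a single $\zeta < \omega_1$ with $\{p_\alpha : \alpha \in S\} \subseteq P_\zeta^{<\omega}$.

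Next I would want $\{p_\alpha : \alpha \in S\}$ to actually be an antichain \emph{inside} $P_\zeta^{<\omega}$, not merely inside $\pz^{<\omega}$. This is where perfect-poset structure enters: if $S, T \in P_\zeta$ are incompatible in $\sz$ then they stay incompatible in $\pz$, and if they are compatible in $\sz$ then $S \wedge T \in P_\zeta$ witnesses their compatibility already in $P_\zeta$ (this is exactly the closure condition in the definition of perfect poset). Hence compatibility of conditions is absolute between $P_\zeta$ and $\pz$, and coordinatewise this gives absoluteness of compatibility between $P_\zeta^{<\omega}$ and $\pz^{<\omega}$. Therefore $\{p_\alpha : \alpha \in S\}$ is an uncountable antichain in $P_\zeta^{<\omega}$, which is an element of $L_{\gamma_\zeta}$.

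Now I would run the classical $\Delta$-system argument inside a model large enough to see this antichain. Working in $L$ (which sees all of $\omega_1$), apply the $\Delta$-system lemma to the finite sets $\dom{p_\alpha} \subseteq \omega$ for $\alpha \in S$: refine $S$ to an uncountable $S'$ so that $\{\dom{p_\alpha} : \alpha \in S'\}$ forms a $\Delta$-system with root $r \subseteq \omega$. There are only countably many finite sequences of conditions indexed by $r$ drawn from the countable poset $P_\zeta$ (countable because $P_\zeta \in L_{\gamma_\zeta}$ and $\gamma_\zeta$ is countable), so refine again to an uncountable $S''$ on which $p_\alpha \upharpoonright r$ is constant. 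Then any two $p_\alpha, p_\beta$ with $\alpha, \beta \in S''$ agree on $r$ and have disjoint domains off $r$, so they are compatible in $P_\zeta^{<\omega}$ — contradicting that they form an antichain.

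The main obstacle, and the step I would be most careful about, is the reflection of the antichain to a bounded stage together with the absoluteness of incompatibility: one must check that the union $\pz^{<\omega} = \bigcup_{\xi<\omega_1} P_\xi^{<\omega}$ really is continuous and increasing (so that an uncountable antichain cannot be "spread out" cofinally), and that the perfect-poset meet operation genuinely lives in $P_\zeta$ rather than only in some later $P_\xi$. Both are guaranteed by the Jensen-sequence definition and the definition of perfect poset, but they are the load-bearing hypotheses; the $\Delta$-system counting afterwards is entirely routine. (One could alternatively phrase the whole argument as: $\pz^{<\omega}$ is the direct limit of a continuous chain of countable ccc posets with absolute incompatibility, hence ccc — but spelling out the contradiction as above is cleaner.)
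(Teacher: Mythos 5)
There is a genuine gap at the very step you yourself flagged as delicate: the reduction of an uncountable antichain to a bounded stage $P_\zeta^{<\omega}$. The map $\alpha \mapsto \xi(\alpha)$, where $\xi(\alpha)$ is least with $p_\alpha \in P_{\xi(\alpha)}^{<\omega}$, is a function from $\omega_1$ to $\omega_1$ with no reason to be regressive (so Fodor does not apply) and no reason to have countable range (so pigeonhole does not apply). A priori the antichain could contribute exactly one new condition at each successor stage, i.e.\ $p_\alpha \in P_{\alpha+1}^{<\omega}\setminus P_\alpha^{<\omega}$, and then no uncountable subfamily lives at any bounded stage. Continuity of the chain at limits does not prevent this. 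Note also that if your reduction did work, the rest would be immediate: $P_\zeta$ is countable (it lies in $L_{\gamma_\zeta}$ with $\gamma_\zeta$ countable), so $P_\zeta^{<\omega}$ is itself a countable set and cannot contain an uncountable antichain at all --- the $\Delta$-system refinement is not needed. That the whole proposition would collapse to a triviality is a sign that the reduction step is carrying all the content, and it is precisely the step that fails.

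The paper's argument (Jensen's original Lemma 6, cited as a ``condensation argument'' reducing to Proposition \ref{proposition Jensen predense sets remain predense}) traps the antichain at a countable stage by a different mechanism: density preservation rather than counting. Given a maximal antichain $A \subseteq \pz^{<\omega}$, one finds (by a L\"owenheim--Skolem closure argument together with condensation in $L$) an ordinal $\xi$ such that $A \cap P_\xi^{<\omega}$ is predense in $P_\xi^{<\omega}$ \emph{and} belongs to $L_{\gamma_\xi}$; condensation is what places the trace inside the specific level $L_{\gamma_\xi}$. Proposition \ref{proposition Jensen predense sets remain predense} then says $A \cap P_\xi^{<\omega}$ stays predense in all of $\pz^{<\omega}$, so every $p \in A$ is compatible with some $q \in A \cap P_\xi^{<\omega}$, and since $A$ is an antichain this forces $p = q$. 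Hence $A = A \cap P_\xi^{<\omega}$ is countable. Your observation that compatibility is absolute between $P_\xi^{<\omega}$ and $\pz^{<\omega}$ (via closure of perfect posets under meets) is correct and is used here, but the load-bearing hypothesis is the predensity preservation built into Jensen's operation, not a $\Delta$-system or cardinality argument.
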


\begin{proof}
By a condensation argument we can reduce this to Proposition \ref{proposition Jensen predense sets remain predense} (cf.\ \cite[Lemma 6]{jensen}).
\end{proof}

\begin{corollary}\label{corollary P has the c.c.c}
Let $\pz$ be a Jensen-like forcing notion, and $\mathbb{Q}$ be the $\omega$-slice product of $\pz$ with finite support of length $\omega_1$. Then for every set $I \subseteq \omega_1 \times \omega$, $\mathbb{Q}{\upharpoonright}I$ is ccc in $L$.
\end{corollary}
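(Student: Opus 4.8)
The plan is to reduce the claim about an arbitrary slice product $\mathbb{Q}{\upharpoonright}I$ to the already-established fact that the finite-support $\omega$-product $\pz^{<\omega}$ is ccc in $L$ (Proposition \ref{proposition omega-product jensen ccc}). First I would observe that $\mathbb{Q}$, the $\omega$-slice product of $\pz$ of length $\omega_1$, is just the finite-support product $\prod_{(\nu,k)\in\omega_1\times\omega}\pz_{(\nu,k)}$ where each factor is a copy of $\pz$ (here every $\pz_\nu = \pz$ since $\pz$ is Jensen-like, hence all slices use the same forcing). A condition in $\mathbb{Q}{\upharpoonright}I$ is a finite partial function into $\prod\pz$ with domain inside $I$, so $\mathbb{Q}{\upharpoonright}I$ is a complete suborder of $\mathbb{Q}$; moreover, since any finite subset of $\omega_1\times\omega$ is contained in a set of the form (finite set of coordinates), there is no loss in bounding the index set.

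The key step is a $\Delta$-system / counting argument exactly as in the classical proof that a finite-support product of ccc posets over a countable (or, with a $\Delta$-system lemma, arbitrary) index set is ccc, but here we want to avoid invoking ccc-ness of uncountable products directly and instead quote Proposition \ref{proposition omega-product jensen ccc}. So suppose toward a contradiction that $\{p_\alpha : \alpha<\omega_1\}$ is an antichain in $\mathbb{Q}{\upharpoonright}I$. Each $p_\alpha$ has finite domain $d_\alpha\subseteq I\subseteq\omega_1\times\omega$; by the $\Delta$-system lemma (which holds in $L$, or indeed in $\ZF+\AC$, applied to the $\aleph_1$ finite sets $d_\alpha$) there is an uncountable $X\subseteq\omega_1$ and a finite root $r$ with $d_\alpha\cap d_\beta = r$ for distinct $\alpha,\beta\in X$. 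Since incompatibility of $p_\alpha,p_\beta$ can only be witnessed on the common domain $r$, and $r$ is finite, the restrictions $p_\alpha{\upharpoonright}r$ all lie in the finite-support product over the finite index set $r$; two conditions agreeing on $r$ are compatible in $\mathbb{Q}{\upharpoonright}I$. Now $\mathbb{Q}{\upharpoonright}r \cong \pz^{|r|}$, which is a complete suborder of $\pz^{<\omega}$, hence ccc in $L$ by Proposition \ref{proposition omega-product jensen ccc}; so among the uncountably many conditions $\{p_\alpha{\upharpoonright}r : \alpha\in X\}$ two must be compatible in $\mathbb{Q}{\upharpoonright}r$, and then the corresponding $p_\alpha,p_\beta$ (with disjoint domains off $r$) are compatible in $\mathbb{Q}{\upharpoonright}I$, contradicting that they form an antichain.

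The one point needing slight care is the reduction of the finitely-many-coordinate product $\pz^{|r|}$ to the $\omega$-product $\pz^{<\omega}$: a finite product $\pz^m$ embeds as a complete suborder of $\pz^{<\omega}$ (fix conditions to be trivial on coordinates $\geq m$), so ccc of $\pz^{<\omega}$ transfers to $\pz^m$ and, by symmetry under permuting coordinates, to the product over any finite index set $r$. This, together with the $\Delta$-system reduction, yields ccc of $\mathbb{Q}{\upharpoonright}I$ for every $I\subseteq\omega_1\times\omega$. I expect the only mild obstacle to be bookkeeping: making sure the $\Delta$-system argument is phrased so that incompatibility is genuinely confined to the root $r$ (true here because the order on the slice product is coordinatewise and conditions with disjoint domains are always compatible), and that we are entitled to use $\AC$ for the $\Delta$-system lemma, which is fine since we are working in $L$.
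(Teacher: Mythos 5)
Your proposal is correct and follows essentially the same route as the paper, whose proof is exactly the one-line reduction ``Proposition \ref{proposition omega-product jensen ccc} plus a $\Delta$-system argument'' that you carry out in detail. The points you flag as needing care (incompatibility being confined to the root because disjoint-domain conditions are compatible, the finite product embedding into $\pz^{<\omega}$, and the availability of the $\Delta$-system lemma in $L$) are all handled correctly.
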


\begin{proof}
This follows from Proposition \ref{proposition omega-product jensen ccc} and a $\Delta$-system argument.
\end{proof}

Next, we consider the uniqueness of generic reals for Jensen-like forcing notions. With an argument similar to that of Jensen forcing, one can show that for every generic filter $G$ for a Jensen-like forcing notion $\pz$, the set of $\pz$-generic reals in $L[G]$ is a singleton. However, we are more interested in what happens after forcing with a product of $\pz$. Kanovei and Lyubetsky \cite{MR3691702} extended the uniqueness of Jensen reals to countable products of Jensen forcing. We generalise their result to Jensen-like forcing notions.

\begin{lemma}[Kanovei-Lyubetsky]\label{lemma uniqueness jensen generics}
Let $M$ be a ctm of $\ZFC^-+ `` \mathcal{P}(\omega)$ exists$"$, $P \in M$ be a perfect poset, and let $\dot{y} \in M$ be a $P^{<\omega}$-name for a real such that for every $k \in \omega$, $1_{P^{<\omega}} \Vdash \dot{y} \neq \dot{x}^k_G$. Then for every $\qz(P)^{<\omega}$-filter $H$ over $M$ and every generic tree $\mathcal{T}_k$ added by $H$, in $M[H]$ the set of conditions forcing that $\dot{y} \notin [\mathcal{T}_k]$ is dense in $(P^H)^{<\omega}$.
\end{lemma}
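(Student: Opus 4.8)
The goal is to adapt the Kanovei--Lyubetsky uniqueness argument for products of Jensen forcing to the setting where the ``base'' poset $P$ has already been obtained by iterating Jensen's operation. The key point is that Jensen's operation $P \mapsto P^H$ for the finite-support product is carried out exactly as in Lemma \ref{lemma facts Jensen forcing}, so the combinatorial content of the lemma should only depend on the generic trees $\langle \mathcal{T}_k : k \in \omega\rangle$ added by $H$ and on the hypothesis $1_{P^{<\omega}} \Vdash \dot{y} \neq \dot{x}^k_G$, not on the particular structure of $P$. First I would fix $k$; by symmetry of the finite-support product under permutations of the coordinates it suffices to treat one value of $k$, say $k = 0$. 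I would then fix a condition $\bar{q} = (q_0,\dots,q_{m-1}) \in (P^H)^{<\omega}$ and a $\qz(P)^{<\omega}$-condition in the generic filter $H$ deciding enough of $\langle \mathcal{T}_j : j < m'\rangle$ to make sense of $\bar q$, and aim to find $\bar{q}' \leq \bar{q}$ and a level $\ell$ such that $\bar{q}'$ forces (in $(P^H)^{<\omega}$) that $\dot{y}{\upharpoonright}\ell \notin \mathcal{T}_0 {\upharpoonright} \ell$, i.e. $\bar{q}' \Vdash \dot{y} \notin [\mathcal{T}_0]$.

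\textbf{Key steps.} The heart of the matter is a fusion/density argument inside $M[H]$ combined with the hypothesis that $\dot y$ is forced different from each generic real $\dot x^k_G$ of the \emph{base} product $P^{<\omega}$. Since every node of each $q_i \in P^H$ is, below some level, of the form $\mathcal{T}_{j} \wedge S$ with $S \in P$ or simply an element of $P$, the condition $\bar q$ can be refined so that the $i$-th coordinate ``lives below'' a single tree $\mathcal{T}_{j(i)} \wedge S_i$ (or $S_i \in P$). Discarding coordinates whose trees coincide with $\mathcal{T}_0$ is where the hypothesis enters: if coordinate $i$ is forced to follow $\mathcal{T}_0$, then $\dot y$ restricted to that coordinate is $\dot x^i_G$-like and the hypothesis $1 \Vdash \dot y \neq \dot x^i_G$ lets us split. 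For the remaining coordinates, the trees $\mathcal{T}_{j(i)}$ with $j(i) \neq 0$ are mutually generic with $\mathcal{T}_0$, so one can run the standard argument: inside $M[H{\upharpoonright}(\text{the relevant coordinates})]$ one uses genericity of $\mathcal{T}_0$ over that submodel to find, along a fusion of the $\bar q$-coordinates, a level $\ell$ and an extension forcing a value of $\dot y {\upharpoonright}\ell$ that is split off from $\mathcal{T}_0 {\upharpoonright}\ell$ (possible because $\mathcal{T}_0$ is perfect and the value of $\dot y$ was decided without reference to $\mathcal{T}_0$). This yields $\bar q' \leq \bar q$ forcing $\dot y \notin [\mathcal{T}_0]$, which is exactly density. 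Finally I would note that Proposition \ref{proposition Jensen predense sets remain predense} guarantees that the names and predense sets used in this submodel argument behave correctly when passing to $(P^H)^{<\omega}$.

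\textbf{Main obstacle.} I expect the principal difficulty to be the bookkeeping around \emph{which} coordinates of the product can be safely handled by the mutual-genericity argument and which must be handled via the hypothesis $1 \Vdash \dot y \neq \dot x^k_G$; in particular, one must be careful that $\dot y$ may ``read'' several coordinates at once, so the naive ``project to one coordinate'' move needs to be replaced by a genuine fusion over the finitely many active coordinates simultaneously, keeping $\mathcal{T}_0$ reserved as the one tree over which we force the disagreement. A secondary technical point is verifying that the relevant submodel $M[H {\upharpoonright} J]$ (for $J$ the set of coordinates other than $0$) still sees $P$, $\dot y$, and enough of the forcing relation for $(P^H)^{<\omega}$ restricted to those coordinates; this is where one invokes that $\qz(P)^{<\omega}$ is a finite-support product, so $H{\upharpoonright}J$ and the $\mathcal{T}_0$-part of $H$ are mutually generic over $M$, together with the absoluteness of the (definable) forcing relation. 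Modulo these points, the argument is a direct transcription of \cite[Theorem 13]{MR3691702} / the uniqueness proof of Theorem \ref{theorem jensen original}, with $P^H$ playing the role that $\mathbb{J}$ plays there.
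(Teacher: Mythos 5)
First, note that the paper does not actually prove this lemma: its proof is the single citation \cite[Theorem 3.1]{friedman2018model}, so there is no in-paper argument against which to match your sketch step by step. Measured against that cited argument, your proposal assembles the right ingredients --- reduce each coordinate of $\bar q$ to a single tree $\mathcal{T}_{j(i)}\wedge S_i$ or $S_i\in P$, handle the coordinates sitting inside $\mathcal{T}_k$ using the hypothesis $1\Vdash\dot y\neq\dot x^i_G$, and handle the rest using genericity of $\mathcal{T}_k$ over the remaining tree-coordinates --- but the central step is left unresolved, and neither case of your dichotomy closes as stated.

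The gap is this. For a coordinate $i$ with $q_i=\mathcal{T}_k\wedge S_i$, the hypothesis only lets you force $\dot y$ to disagree with the single generic branch $\dot x^i_G$; it does not force $\dot y\notin[\mathcal{T}_k]$, because the perfect subtree of $\mathcal{T}_k$ above coordinate $i$'s reserved node has continuum many branches and $\dot y$ could in principle name one of the others (computed jointly from several coordinates). Symmetrically, in the non-reserved case, the fact that a value $t=\dot y{\upharpoonright}\ell$ ``was decided without reference to $\mathcal{T}_0$'' does not imply $t\notin\mathcal{T}_0$. What makes both cases work in the cited proof is that the density statement is established as a density statement for the tree-adding forcing $\qz(P)^{<\omega}$ (over $M$, or over $M[H{\upharpoonright}(\omega\setminus\{k\})]$ via mutual genericity), where one may still \emph{shrink} the condition $(T,n)$ at tree-coordinate $k$: having decided $\dot y{\upharpoonright}\ell=t$ below the projected condition $\bar R\in P^{<\omega}$ obtained by replacing each $\mathcal{T}_{j(i)}$ by the corresponding condition tree, one removes $t$ from $T$ above level $n$, so that $t\notin\mathcal{T}_k$ is forced; and at a node $s_i\in T\cap 2^n$ reserved by a coordinate with $q_i\leq\mathcal{T}_k$, one uses $1\Vdash\dot y\neq\dot x^i_G$ to force $\dot y$ out of coordinate $i$'s shrunken tree $R_i'$ \emph{and simultaneously shrinks $T$ below $s_i$ into $R_i'$}, so that $(\mathcal{T}_k)_{s_i}\subseteq R_i'$ and the forced disagreement with $R_i'$ becomes a forced disagreement with $(\mathcal{T}_k)_{s_i}$. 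This shrink-the-$k$-th-tree-condition move is the actual content of the lemma and is absent from your sketch; working ``in $M[H]$ with $\mathcal{T}_k$ a fixed tree,'' as your opening paragraph sets things up, one cannot steer $\dot y$ off $\mathcal{T}_k$ at the reserved nodes at all. (Two smaller points: the fact you need so that the $P^{<\omega}$-name $\dot y$ evaluates correctly in $(P^H)^{<\omega}$-extensions is item 4 of Lemma \ref{lemma facts Jensen forcing}, not Proposition \ref{proposition Jensen predense sets remain predense}, which concerns whole Jensen-sequences; and the source the paper points to is \cite[Theorem 3.1]{friedman2018model} rather than \cite{MR3691702}.)
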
 

\begin{proof}
See \cite[Theorem 3.1]{friedman2018model}.
\end{proof}

\begin{proposition}\label{proposition uniqueness jensen generics}
Let $\pz$ be a Jensen-like forcing notion and let $G$ be a $\pz^{<\omega}$-generic filter over $L$. Then for every $\pz$-generic real $y \in L[G]$ over $L$, there is some $k \in \omega$ such that $y = x_G^k$.
\end{proposition}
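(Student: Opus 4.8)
The plan is to reduce the statement to the Kanovei--Lyubetsky uniqueness result for a single successor step of Jensen's operation, Lemma \ref{lemma uniqueness jensen generics}, by running a condensation/reflection argument along the Jensen-sequence witnessing that $\pz$ is Jensen-like. Fix a Jensen-sequence $\langle (L_{\gamma_\xi}, P_\xi) : \xi < \omega_1 \rangle$ with $\pz = \bigcup_{\xi<\omega_1}P_\xi$, and let $y \in L[G]$ be a $\pz$-generic real over $L$; so $y = \bigcap\{[T] : T \in G_y\}$ for some $\pz$-generic filter $G_y$ derived from $G$. Suppose towards a contradiction that $y \neq x_G^k$ for every $k \in \omega$. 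Since $\pz^{<\omega}$ is ccc in $L$ (Proposition \ref{proposition omega-product jensen ccc}), there is a $\pz^{<\omega}$-name $\dot y$ for $y$ and a countable collection of dense sets/maximal antichains witnessing, for each $k$, that $1_{\pz^{<\omega}} \Vdash \dot y \neq \dot x^k_G$ — here I would use that $y$ being $\pz$-generic means that every predense subset of $\pz$ lying in some $L_{\gamma_\xi}$ is met by $G_y$, which by Proposition \ref{proposition Jensen predense sets remain predense} is automatic.

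The heart of the argument is a Löwenheim--Skolem/condensation step. I would take a countable elementary submodel (or use the $L$-hierarchy directly) to find a countable ordinal $\xi$ such that $\dot y$, the relevant name, and all the antichains coding ``$\dot y \neq \dot x^k_G$'' for every $k$, together with the initial segment of the Jensen-sequence, are captured inside $L_{\gamma_\xi}$, and such that $\xi+1$ is the successor step along the Jensen-sequence at which Jensen's operation is applied: $P_{\xi+1} = (P_\xi)^{H_\xi}$ for the generic filter $H_\xi \in L_{\gamma_{\xi+1}}$ adding generic trees $\langle \mathcal{T}_k : k \in \omega\rangle$. Setting $M := L_{\gamma_\xi}$ and $P := P_\xi$, the name $\dot y$ restricts to a $P^{<\omega}$-name in $M$ and the hypotheses of Lemma \ref{lemma uniqueness jensen generics} are met: $1_{P^{<\omega}} \Vdash \dot y \neq \dot x^k_G$ for every $k$ (transferring the antichain witnesses down to $M$ by elementarity/absoluteness). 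Lemma \ref{lemma uniqueness jensen generics} then tells us that in $M[H_\xi] = L_{\gamma_{\xi+1}}$, for each generic tree $\mathcal{T}_k$, the set $D_k$ of conditions in $(P^H)^{<\omega} = P_{\xi+1}^{<\omega}$ forcing $\dot y \notin [\mathcal{T}_k]$ is dense.

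From here I would close the argument as follows. The sets $D_k$ lie in $L_{\gamma_{\xi+1}}$ and are predense in $P_{\xi+1}^{<\omega}$, hence by Proposition \ref{proposition Jensen predense sets remain predense} remain predense in $\pz^{<\omega}$; so $G$ meets each $D_k$, and therefore in $L[G]$ we have $y \notin [\mathcal{T}_k]$ for every $k$. But $\{\mathcal{T}_k : k \in \omega\}$ is a maximal antichain in $P_{\xi+1}$ (Lemma \ref{lemma facts Jensen forcing}(2)), hence remains predense in $\pz$, so the $\pz$-generic filter $G_y$ must meet it: there is some $k$ with $\mathcal{T}_k \in G_y$, i.e.\ $y \in [\mathcal{T}_k]$, a contradiction. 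Therefore $y = x_G^k$ for some $k$.

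The main obstacle will be the condensation/reflection step: one must verify that a countable elementary submodel collapses correctly onto some $L_{\gamma_\xi}$ appearing \emph{in the chosen Jensen-sequence}, and that the name $\dot y$ together with all countably many antichain-witnesses for ``$\dot y \neq \dot x^k_G$'' (one for each $k$) genuinely reflect into that $M$ in a way compatible with the structure of Jensen's operation. This is precisely the kind of argument Jensen used for the ccc of $\mathbb{J}$ (cf.\ \cite[Lemma 6]{jensen}), and the analogue here for \cite[Theorem 3.1]{friedman2018model} on products; I expect it to go through with the same condensation bookkeeping, but it is the step requiring the most care to state rigorously.
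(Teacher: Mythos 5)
Your proposal is correct and follows essentially the same route as the paper, whose entire proof is the single sentence ``By a condensation argument we can reduce this to Lemma \ref{lemma uniqueness jensen generics}''; you have simply filled in the condensation step, the application of the Kanovei--Lyubetsky lemma at a successor stage, and the closing use of Proposition \ref{proposition Jensen predense sets remain predense} together with the maximal antichain $\{\mathcal{T}_k : k\in\omega\}$. The only detail to tidy is the standard reduction from ``some $p\in G$ forces $\dot y\neq\dot x^k_G$ for all $k$'' to the hypothesis ``$1_{P^{<\omega}}\Vdash\dot y\neq\dot x^k_G$'' of Lemma \ref{lemma uniqueness jensen generics} (work below $p$ or modify the name), which is routine.
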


\begin{proof}
By a condensation argument we can reduce this to Lemma \ref{lemma uniqueness jensen generics}.
\end{proof}

\begin{corollary}\label{corollary Jensen uniqueness}
Let $\pz$ be a Jensen-like forcing notion and let $\mathbb{Q}$ be the $\omega$-slice product of $\pz$ with finite support of length $\omega_1$, let $I \subseteq \omega_1 \times \omega$, and let $G$ be a $\mathbb{Q} {\upharpoonright} I$-generic filter over $L$. Then for every $\pz$-generic real $y \in L[G]$ over $L$, there is some $i \in I$ such that $y = x^i_G$.
\end{corollary}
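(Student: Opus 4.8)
The plan is to reduce Corollary \ref{corollary Jensen uniqueness} to Proposition \ref{proposition uniqueness jensen generics} by a projection/condensation argument, exactly paralleling how Corollary \ref{corollary P has the c.c.c} reduced to Proposition \ref{proposition omega-product jensen ccc}. First I would observe that the $\omega$-slice product $\mathbb{Q}$ restricted to an index set $I \subseteq \omega_1 \times \omega$ is, up to the obvious reindexing, just a finite-support product of copies of $\pz$ indexed by $I$; if $I$ is finite this is a finite product and if $I$ is countably infinite it is isomorphic to $\pz^{<\omega}$, while if $I$ is uncountable we must be slightly more careful. The key point is that any $\pz$-generic real $y \in L[G]$ over $L$ is, by ccc (Corollary \ref{corollary P has the c.c.c}) together with a nice-name argument, already added by $G {\upharpoonright} I_0$ for some countable $I_0 \subseteq I$: a real is coded by a countable set of antichains, each antichain is countable by ccc, so only countably many coordinates of $I$ are mentioned. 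Thus we may assume without loss of generality that $I$ is countable, and after reindexing that $\mathbb{Q}{\upharpoonright}I \cong \pz^{<\omega}$.

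Next, with $I$ countable I would apply Proposition \ref{proposition uniqueness jensen generics} directly: since $y$ is a $\pz$-generic real in $L[G]$ over $L$ and $G$ is (isomorphic to) a $\pz^{<\omega}$-generic filter over $L$, there is a coordinate $k$ with $y = x_G^k$, and tracing back through the reindexing this $k$ corresponds to some $i \in I_0 \subseteq I$, giving $y = x_G^i$. One subtlety to address is that ``$\pz$-generic real over $L$'' must be interpreted correctly in the context of the slice product — but since $\mathbb{Q}{\upharpoonright}I$ is a finite-support product and the $i$-th coordinate filter $G {\upharpoonright} \{i\}$ is a $\pz$-generic filter over $L$ yielding the real $x_G^i := \bigcap\{[T] : T \in G{\upharpoonright}\{i\}\}$, the notion matches the one in Proposition \ref{proposition uniqueness jensen generics} and in the hypotheses of Lemma \ref{lemma uniqueness jensen generics}. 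Here one uses that the slice structure plays no role beyond bookkeeping: each $\pz_\nu = \pz$ and the finite-support condition on $\mathbb{Q}$ restricts to the finite-support condition on the product over $I$.

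The main obstacle I anticipate is making the ``countably many coordinates suffice'' step fully rigorous when $I$ is uncountable, i.e.\ that a single real in the extension depends on only countably many coordinates. This is the standard fact that in a ccc finite-support product, a name for a real can be taken to mention only countably many coordinates; it follows from Corollary \ref{corollary P has the c.c.c} (each maximal antichain deciding $\dot y(n)$ is countable, the supports of its conditions are finite, so the union over all $n$ of all mentioned coordinates is countable) together with the fact that $\mathbb{Q}{\upharpoonright}I_0$ completely embeds into $\mathbb{Q}{\upharpoonright}I$ with the former being a complete suborder. Once that is in hand, the rest is routine condensation bookkeeping of the same flavour as Propositions \ref{proposition Jensen predense sets remain predense} and \ref{proposition uniqueness jensen generics}, and I would simply write ``by a condensation and $\Delta$-system argument, this reduces to Proposition \ref{proposition uniqueness jensen generics}'' in the style the paper has already adopted for its analogous corollaries.
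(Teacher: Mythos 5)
Your proposal is correct and follows essentially the same route as the paper: take a nice $\mathbb{Q}{\upharpoonright}I$-name for $y$, use the ccc (Corollary \ref{corollary P has the c.c.c}) to shrink to a countable $I'\subseteq I$ so that $\mathbb{Q}{\upharpoonright}I'\cong\pz^{<\omega}$, and then apply Proposition \ref{proposition uniqueness jensen generics}. The extra care you take about complete embeddings and the identification of coordinate generics is consistent with, and slightly more explicit than, what the paper writes.
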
 

\begin{proof}
Let $y \in L[G]$ be $\pz$-generic over $L$ and let $\dot{y}$ be a nice $\mathbb{Q} {\upharpoonright} I$-name for $y$. By Corollary \ref{corollary P has the c.c.c}, $\mathbb{Q} {\upharpoonright} I$ is ccc is and so $\dot{y}$ is countable. Hence, there is a countable infinite $I' \subseteq I$ such that $\dot{y}$ is a $\mathbb{Q} {\upharpoonright} I'$-name. Since $I'$ is countable, $\mathbb{Q} {\upharpoonright} I'$ is order isomorphic to $\pz^{<\omega}$. Hence, we can apply Proposition \ref{proposition uniqueness jensen generics} and so there is an $i \in I'$ such that $y = x^i_G$.
\end{proof}

Hence Jensen-like forcing notions satisfy generalisations of two of the three properties in Theorem \ref{theorem jensen original}.  The following proposition helps us with the remaining one, to determine the complexity of the set of generic reals.

\begin{proposition}\label{proposition characterization Jensen generics}
Let $\pz$ be a Jensen-like forcing notion, let $\langle(L_{\gamma_\xi},P_\xi) : \xi < \omega_1 \rangle$ be a Jensen-sequence with $\pz:= \bigcup_{\xi < \omega_1} P_\xi$, and let $\langle \mathcal{T}_k^\xi : k \in \omega \rangle$ be the $\qz(P_\xi)^{<\omega}$-generic sequence used to construct $P_{\xi+1}$. The following are equivalent:
\begin{enumerate}
    \item A real $x$ is $\pz$-generic over $L$,
    \item for every $\xi < \omega_1$, $x$ is $P_\xi$-generic over $L_{\gamma_\xi}$, and
    \item for every $\xi < \omega_1$, there is some $k \in \omega$ such that $x \in [\mathcal{T}^\xi_k]$.
\end{enumerate}
\end{proposition}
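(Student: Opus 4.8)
The plan is to prove the cycle of implications $(1)\Rightarrow(2)\Rightarrow(3)\Rightarrow(1)$, exploiting that the $P_\xi$ form an increasing continuous chain of perfect posets with $\pz=\bigcup_{\xi<\omega_1}P_\xi$, and that each $P_{\xi+1}=(P_\xi)^{H_\xi}$ is obtained by Jensen's operation from the generic sequence $\langle\mathcal{T}^\xi_k:k\in\omega\rangle$.

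For $(1)\Rightarrow(2)$: suppose $x$ is $\pz$-generic over $L$, witnessed by a filter $G$ with $x=\bigcap\{[T]:T\in G\}$. Fix $\xi<\omega_1$; I want $G_\xi:=\{T\in P_\xi: x\in[T]\}$ (equivalently $G\cap P_\xi$, using that $G$ is determined by $x$) to be $P_\xi$-generic over $L_{\gamma_\xi}$. Genericity reduces to meeting every predense $D\subseteq P_\xi$ lying in $L_{\gamma_\xi}$; but by Proposition \ref{proposition Jensen predense sets remain predense} such a $D$ remains predense in $\pz$, so $G$ meets it, and the meeting condition lies in $P_\xi$ because $P_\xi$ is upward closed along the chain in the relevant sense — more carefully, if $T\in G$ with $T\leq d$ for some $d\in D\subseteq P_\xi$, then $d\in G_\xi$ since $x\in[T]\subseteq[d]$. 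One must also check $G_\xi$ is a filter on $P_\xi$: closure under meets uses that $P_\xi$ is a perfect poset (closed under $\land$ of compatibles), and any two elements of $G_\xi$ are compatible in $\sz$ since both contain $x$, hence their meet exists and again contains $x$.

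For $(2)\Rightarrow(3)$: if $x$ is $P_{\xi+1}$-generic over $L_{\gamma_{\xi+1}}$ — wait, (2) only gives $P_\xi$-genericity over $L_{\gamma_\xi}$ for each $\xi$; but since the Jensen-sequence has length $\omega_1$, $(2)$ for $\xi+1$ gives $x$ is $P_{\xi+1}$-generic over $L_{\gamma_{\xi+1}}$, and $\{\mathcal{T}^\xi_k:k\in\omega\}$ is a maximal antichain in $P_{\xi+1}\subseteq$ the forcing, lying in $L_{\gamma_{\xi+1}}$ by Lemma \ref{lemma facts Jensen forcing}(2) (the generic sequence is an element of $L_{\gamma_{\xi+1}}$ since it is used to define $P_{\xi+1}$). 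A $P_{\xi+1}$-generic filter meets every maximal antichain of $L_{\gamma_{\xi+1}}$, so it contains some $\mathcal{T}^\xi_k$, whence $x\in[\mathcal{T}^\xi_k]$. This gives (3) for each $\xi$.

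For $(3)\Rightarrow(1)$: assume for every $\xi<\omega_1$ there is $k$ with $x\in[\mathcal{T}^\xi_k]$. Set $G:=\{T\in\pz:x\in[T]\}$; I must show $G$ is a $\pz$-generic filter over $L$. That $G$ is a filter follows as above from $\pz$ being a perfect poset and any two members of $G$ both containing $x$. For genericity, let $D\in L$ be predense in $\pz$; I want $G\cap D\neq\emptyset$. Here I would use a condensation / reflection argument: by the standard Löwenheim–Skolem condensation used throughout (cf.\ the proof of Proposition \ref{proposition Jensen predense sets remain predense} and \cite[Lemma 6]{jensen}), $D$ is in fact essentially decided by some level of the Jensen-sequence — more precisely, there is $\xi<\omega_1$ such that $D\cap P_\xi\in L_{\gamma_\xi}$ and $D\cap P_\xi$ is predense in $P_\xi$. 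Then using $x\in[\mathcal{T}^\xi_k]$ for appropriate $k$, the tree $\mathcal{T}^\xi_k\land T$ for suitable $T\in D\cap P_\xi$ witnesses that $x$ passes through a condition below an element of $D$; pushing this down one needs that some $T\in D$ actually satisfies $x\in[T]$. The cleanest route is: by $(3)\Rightarrow(2)$-type reasoning show first that $x$ is $P_\xi$-generic over $L_{\gamma_\xi}$ for all $\xi$ (the implication $(3)\Rightarrow(2)$ is itself proved by induction on $\xi$, using at successor steps that membership in some $[\mathcal{T}^{\xi}_k]$ plus $P_\xi$-genericity over $L_{\gamma_\xi}$ upgrades to $P_{\xi+1}$-genericity over $L_{\gamma_{\xi+1}}$ via Lemma \ref{lemma facts Jensen forcing}(3), and at limits continuity of the sequence), and then deduce full $\pz$-genericity over $L$ by the condensation argument reducing predense $D\subseteq\pz$ in $L$ to a predense subset of some $P_\xi$ in $L_{\gamma_\xi}$.

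The main obstacle I anticipate is the $(3)\Rightarrow(1)$ direction, specifically the condensation argument establishing that every predense $D\subseteq\pz$ in $L$ reflects to a predense $D\cap P_\xi\in L_{\gamma_\xi}$ for some $\xi<\omega_1$ (so that the pointwise-through-$[\mathcal{T}^\xi_k]$ hypothesis can be leveraged). This is the same phenomenon that makes Jensen forcing ccc and drives Propositions \ref{proposition Jensen predense sets remain predense} and \ref{proposition omega-product jensen ccc}; the delicate point is choosing the elementary submodel so that it correctly computes both $D$ and an initial segment of the Jensen-sequence, and seeing that the transitive collapse is of the form $L_{\gamma_\xi}$ with $P_\xi$ the collapse of the relevant approximation. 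The remaining implications $(1)\Rightarrow(2)$ and $(2)\Rightarrow(3)$ are comparatively routine, relying only on Proposition \ref{proposition Jensen predense sets remain predense} and Lemma \ref{lemma facts Jensen forcing}(2) together with the observation that for a perfect poset a set of trees all containing a fixed real is automatically a filter.
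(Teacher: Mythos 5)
Your directions $(1)\Rightarrow(2)$ and $(2)\Rightarrow(3)$ are fine (the paper instead proves $(1)\Rightarrow(3)$ directly by pushing the maximal antichain $\{\mathcal{T}^\xi_k : k\in\omega\}$ up to $\pz$ via Proposition \ref{proposition Jensen predense sets remain predense}, but your route through $(2)$ works), and your reduction of $(3)\Rightarrow(1)$ to ``$(3)\Rightarrow(2)$ plus reflection of predense sets to some $(L_{\gamma_\xi},P_\xi)$'' matches the paper's $(2)\Rightarrow(1)$, which uses the ccc of $\pz$ to put a maximal antichain inside some $L_{\gamma_\xi}$. The genuine gap is your proof of $(3)\Rightarrow(2)$. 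You propose an induction on $\xi$ whose successor step claims that ``$x\in[\mathcal{T}^\xi_k]$ plus $P_\xi$-genericity over $L_{\gamma_\xi}$ upgrades to $P_{\xi+1}$-genericity over $L_{\gamma_{\xi+1}}$ via Lemma \ref{lemma facts Jensen forcing}(3).'' That lemma only says that predense subsets of $P_\xi$ lying in $L_{\gamma_\xi}$ stay predense in $P_{\xi+1}$; it says nothing about the many dense subsets of $P_{\xi+1}$ that are elements of $L_{\gamma_{\xi+1}}$ but not of $L_{\gamma_\xi}$, and those are exactly what $P_{\xi+1}$-genericity over $L_{\gamma_{\xi+1}}$ must meet. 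Worse, the step is actually false: the leftmost branch of $\mathcal{T}^\xi_k$ is an element of $L_{\gamma_{\xi+1}}$ (since $H_\xi\in L_{\gamma_{\xi+1}}$), lies in $[\mathcal{T}^\xi_k]$, and is $P_\xi$-generic over $L_{\gamma_\xi}$ (by the very equivalence being proved), yet no real belonging to $L_{\gamma_{\xi+1}}$ can be $P_{\xi+1}$-generic over $L_{\gamma_{\xi+1}}$, since $\{T\in P_{\xi+1} : x\notin[T]\}$ is then a dense set in $L_{\gamma_{\xi+1}}$ that $x$ misses. The same objection kills the limit step: continuity of $\langle P_\xi\rangle$ does not make $L_{\gamma_\lambda}$ the union of the $L_{\gamma_\xi}$'s dense sets.

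The correct argument needs no induction and is level-by-level: given $D\subseteq P_\xi$ dense in $L_{\gamma_\xi}$ and $k$ with $x\in[\mathcal{T}^\xi_k]$, the set
\begin{align*}
E:=\{q\in\qz(P_\xi)^{<\omega} : q(k)=(T,n)\ \text{and}\ \forall s\in 2^n\cap T\,(T_s\in D)\}
\end{align*}
is dense in $\qz(P_\xi)^{<\omega}$ and lies in $L_{\gamma_\xi}$ (shrink each $T_s$ into $D$ and take the finite union; the order on $\qz(P_\xi)$ freezes $2^n\cap T$, which is what makes this a legal extension). Since $H_\xi$ is generic over $L_{\gamma_\xi}$, it meets $E$, giving $(T,n)$ with $\mathcal{T}^\xi_k\subseteq T$ and all $T_s\in D$; the unique $s\in 2^n\cap T$ with $s\subseteq x$ then yields $T_s\in D$ with $x\in[T_s]$. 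Note that this uses the genericity of $H_\xi$ itself over $L_{\gamma_\xi}$, not any inductive hypothesis about $x$; your write-up never invokes the dense set $E$ or the genericity of $H_\xi$, which is the essential mechanism here.
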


\begin{proof}
We start with $(1. \Rightarrow 3.)$. Let $\xi <\omega_1$. By Lemma \ref{lemma facts Jensen forcing}, $\{\mathcal{T}_k^\xi : k \in \omega \}$ is a maximal anitchain in $P_{\xi+1}$ and by Proposition \ref{proposition Jensen predense sets remain predense}, it remains predense in $\pz$. Hence, there is some $k \in \omega$ such that $x \in [\mathcal{T}^\xi_k]$.

For $(3. \Rightarrow 2.)$, let $\xi < \omega_1$ and let $D \subseteq P_\xi$ be dense in $L_{\gamma_\xi}$. By assumption, there is some $k \in \omega$ such that $x \in [\mathcal{T}^\xi_k]$. We define 
\begin{align*}
    E:= \{q \in \qz(P_\xi)^{<\omega} : q(k)=(T,n) \land \forall s \in 2^n \cap T (T_s \in D)\}.
\end{align*}
First, we prove that $E$ is dense in $\qz(P_\xi)^{<\omega}$: let $q \in \qz(P_\xi)^{<\omega}$ and let $q(k)=(T,n)$. For every $s \in 2^n \cap T$, $T_s \in P_\xi$, so there is an $S_s \in D$ with $S_s \leq T_s$. Let $S:= \bigcup_{s \in 2^n \cap T} S_s$ and $q'\in \qz(P_\xi)^{<\omega} $ by $q'(k) := (S,n)$ and for $l \neq k$, $q'(l) := q(l)$. Then $(S,n) \leq (T,n)$ and so $q' \leq q$ and $q' \in E$. Hence, $E$ is dense in $\qz(P_\xi)^{<\omega}$. Let $H_\xi$ be the $\qz(P_\xi)^{<\omega}$-generic filter corresponding to $\langle \mathcal{T}_k^\xi : k \in \omega \rangle$. Then $E$ meets $H_\xi$, so there is a pair $(T,n) \in \qz(P_\xi)$ such that for every $s \in 2^n \cap T$, $T_s \in D$ and $\mathcal{T}_k^\xi \subseteq T$. Let $s \in 2^n \cap T$ such that $s \subseteq x$. Then $x \in [T_s]$ and $T_s \in D$. Thus, $x$ is $P_\xi$-generic over $L_{\gamma_\xi}$.

Finally, for $(2. \Rightarrow 1.)$, let $\mathcal{A} \subseteq \pz$ be a maximal antichain. Since $\pz$ is ccc, $\mathcal{A}$ is countable, so let $\xi <\omega_1$ be such that $\mathcal{A} \subseteq P_\xi$ and $\mathcal{A} \in L_{\gamma_\xi}$. By assumption $x$ is $P_\xi$-generic over $L_{\gamma_\xi}$, so there is a $T \in P_\xi \subseteq P$ with $x \in [T]$.
\end{proof}

\begin{corollary}\label{cor Jensen complexity set of generics}
    Let $\pz$ be a Jensen-like forcing notion, let $J := \langle(L_{\gamma_\xi},P_\xi) : \xi < \omega_1 \rangle$ be a Jensen-sequence with $\pz:= \bigcup_{\xi < \omega_1} P_\xi$, and let $n > 2$. If there is an unbounded subsequence $J':=\langle (M_\xi,Q_\xi): \xi < \omega_1 \rangle$ of $J$ which is $\Delta^{\HC}_{n-1}$, then the set of all $\pz$-generics over $L$ is $\Pi^{1}_n$ in every inner model of $\mathsf{ZFC}$ containing $L$.
\end{corollary}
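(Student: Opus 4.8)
The plan is to use the characterization of $\pz$-generics from Proposition \ref{proposition characterization Jensen generics}, in particular the equivalence $(1.)\Leftrightarrow(3.)$, and to show that this condition can be expressed by a $\Pi^1_n$ formula once we have a sufficiently definable witnessing subsequence. Recall that $x$ is $\pz$-generic over $L$ iff for every $\xi<\omega_1$ there is $k\in\omega$ with $x\in[\mathcal T^\xi_k]$. The obvious problem is that this quantifies over $\omega_1$, so we first want to replace the quantifier over all of $\omega_1$ by a quantifier over the countable ordinals coded by reals, i.e.\ pass to a lightface analytic-in-a-real reformulation. The key reduction is: since the subsequence $J'=\langle(M_\xi,Q_\xi):\xi<\omega_1\rangle$ is unbounded in $J$ and the $Q_\xi$'s are cofinal among the $P_\xi$'s, being $P_\xi$-generic over $L_{\gamma_\xi}$ for all $\xi<\omega_1$ is equivalent to being $Q_\xi$-generic over $M_\xi$ for all $\xi<\omega_1$ (every dense set in some $P_\eta$ appears in some $M_\xi$, and conversely every $Q_\xi$ is some $P_\eta$). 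So it suffices to express ``for every countable ordinal $\xi$, $x$ is $Q_\xi$-generic over $M_\xi$''.

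Next I would carry out the standard Shoenfield-style analysis of the complexity. The statement ``$x$ is $Q_\xi$-generic over $M_\xi$'' is, once $(M_\xi,Q_\xi)$ is given as a real code, an arithmetic (indeed $\Pi^0_2$-in-the-code) assertion: for every dense $D\in M_\xi$ there is $T\in D$ with $x\in[T]$, and density and membership in $M_\xi$ are $\Delta_1$ over $M_\xi$ hence arithmetic in any real coding $M_\xi$. The real work is in saying ``$w$ codes some $(M_\xi,Q_\xi)$ with $\xi<\omega_1$''. Here the hypothesis that $J'$ is $\Delta^{\HC}_{n-1}$ enters: the predicate ``$w$ is (a code for) a member of the sequence $J'$'' is $\Delta^{\HC}_{n-1}$, and by the standard translation between $\HC$ and second-order arithmetic (a $\Sigma_m$ formula over $\HC$ becomes $\Sigma^1_m$ over reals, using that $\HC$ is essentially the reals modulo coding), this predicate becomes $\Delta^1_{n-1}$ as a predicate of reals. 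Then the full statement ``$x$ is $\pz$-generic over $L$'' becomes: for all reals $w$, if $w$ codes a member of $J'$ then $x$ is generic over the structure coded by $w$ — this is $\forall w\,(\Sigma^1_{n-1}\to\Pi^0_2) $, hence $\Pi^1_{n-1}$, which is in particular $\Pi^1_n$. One must also check absoluteness: the characterization via $J'$ uses only $\Pi^1_n$-expressible data and the formula is evaluated the same way in any inner model $W$ of $\ZFC$ with $L\subseteq W$, because $L$, $\omega_1^L$ may differ from $\omega_1^W$ but the reals of $L$ and the $L$-countable ordinals are absolutely identifiable (every $\pz$-generic over $L$ is determined by its behaviour against $L$-dense sets, which live in $L$); so the $\Pi^1_n$ definition computed in $W$ still picks out exactly the $\pz$-generics over $L$. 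This last point mirrors the phrasing in Theorem \ref{theorem jensen original} that the $\Pi^1_2$ definition of Jensen reals works in every inner model containing $L$.

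The step I expect to be the main obstacle is pinning down the exact complexity count — i.e.\ verifying that ``$w$ codes a member of $J'$'' is genuinely $\Delta^1_{n-1}$ (not just $\Sigma^1_{n-1}$ or $\Pi^1_{n-1}$) once the $\HC$-definition is $\Delta^{\HC}_{n-1}$, and being careful about the shift by one between the $\HC$-hierarchy and the projective hierarchy, so that the final bound comes out $\Pi^1_n$ rather than $\Pi^1_{n+1}$. In particular one needs the $\Sigma^1$/$\Sigma^{\HC}$ translation lemma in the form that $\Sigma_{m}$-over-$\HC$ corresponds to $\Sigma^1_{m}$ (with the $\Pi^0_1$/$\Sigma^0_1$ base matching up appropriately), and then $\forall w(\Delta^1_{n-1}(w)\to \Pi^0_2(x,w))$ collapses to $\Pi^1_{n-1}\subseteq\Pi^1_n$; the slack of one level (why we only need $n>2$ and $\Delta^{\HC}_{n-1}$ rather than $\Delta^{\HC}_n$) is exactly what this corollary is exploiting, so the bookkeeping there is the delicate part. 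The rest — density being arithmetic in codes, unboundedness of $J'$ giving equivalence of the two genericity conditions, absoluteness between inner models — is routine and follows the template of Jensen's original argument as recorded in Theorem \ref{theorem jensen original} and Proposition \ref{proposition characterization Jensen generics}.
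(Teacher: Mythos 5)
Your overall strategy is the same as the paper's: reduce ``$x$ is $\pz$-generic over $L$'' to ``for every $\xi<\omega_1$, $x$ is $Q_\xi$-generic over $M_\xi$'' using the unboundedness of $J'$ and the downward heredity of genericity (the paper uses the equivalence $(1)\Leftrightarrow(2)$ of Proposition \ref{proposition characterization Jensen generics}, not $(1)\Leftrightarrow(3)$ as you first invoke, but you switch to $(2)$ yourself in the next sentence, so this is cosmetic); observe that genericity over a coded countable model is arithmetic in the code; and then convert the $\Delta^{\HC}_{n-1}$ definability of $J'$ into a projective bound, checking that $\omega_1^L$ and $L_{\omega_1^L}$, hence $J'$, remain appropriately definable over $\HC$ in any inner model $M\supseteq L$.

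There is, however, a concrete error exactly at the point you flag as delicate: the translation lemma does \emph{not} say that $\Sigma_m$-over-$\HC$ corresponds to $\Sigma^1_m$. The correct correspondence (\cite[Lemma 25.25]{jech2003set}, which is what the paper invokes) is that for $m\geq 1$ a set of reals is $\Sigma^1_{m+1}$ iff it is $\Sigma_m$ over $(\HC,\in)$ --- there is a shift of one level. Consequently ``$w$ codes a member of $J'$'' is $\Delta^1_n$, not $\Delta^1_{n-1}$, and your formula $\forall w\,(\,\cdot\to\cdot\,)$ comes out $\Pi^1_n$, not $\Pi^1_{n-1}$. Your computation as written would prove the set of generics is $\Pi^1_{n-1}$, which is one level too strong and is not what this construction delivers (the whole point of requiring the sequence to be only $\Delta^{\HC}_{n-1}$ is to land at exactly $\Pi^1_n$). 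The cleanest repair is to do the entire complexity count over $\HC$ first --- the set of generics is $\Pi^{\HC}_{n-1}$ since it is a universal quantifier over a $\Delta^{\HC}_{n-1}$ predicate with a $\Delta^{\HC}_1$ matrix --- and only then apply the (correctly stated) translation once to obtain $\Pi^1_n$; this avoids juggling real codes and the base-level bookkeeping you were worried about.
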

 
\begin{proof}
By Proposition \ref{proposition characterization Jensen generics}, a real $x$ is $\pz$-generic over $L$ iff for every $\xi < \omega_1$, $x$ is $P_\xi$-generic over $L_{\gamma_\xi}$. If $x$ is $P_\xi$-generic over $L_{\gamma_\xi}$, then $x$ is also $P_{\xi'}$-generic over $L_{\gamma_{\xi'}}$ for every $\xi' < \xi$. Hence, $x$ is $\pz$-generic over $L$ iff for every $\xi < \omega_1$, $x$ is $Q_\xi$-generic over $M_\xi$. Let $M$ be an inner model of $\mathsf{ZFC}$ containing $L$.  Note that $\omega_1^L$ and $L_{\omega_1^L}$ are $\Sigma^{\HC}_1$ in $M$. Hence, $J'$ is $\Delta^{\HC}_{n-1}$ in $M$ and so the set of $\pz$-generic reals over $L$ is $\Pi^{\HC}_{n-1}$ in $M$. By \cite[Lemma 25.25]{jech2003set}, the set of $\pz$-generic reals over $L$ is $\Pi^1_n$.
\end{proof}

Next, we aim to show that $\omega$-slice products of Jensen-like forcing notions are $n$-absolute for slices. Kanovei and Lyubetsky proved  an analogous result for almost disjoint forcing (\cite[$\S4$]{KanoveiLyubetsky}), building their product from an increasing sequence of forcing notions. Our method is somewhat analogous; instead of almost disjoint forcing, we use Jensen-like forcing notions. We first store together all of the Jensen-sequences in a single set.

\begin{definition}[$\mathcal{M}_{\mathcal{J}}$ and $\preccurlyeq$]\label{definition m-j-ast}
Let $\mathcal{M}_{\mathcal{J}}$ be the set of all $(M, P)$ such that there is a Jensen-sequence $J \in M$, such that $J^\frown (M,P)$ is a Jensen-sequence. We define an ordering $\preccurlyeq\,\, \subseteq \mathcal{M}_{\mathcal{J}} \times \mathcal{M}_{\mathcal{J}}$ by $(N,Q) \preccurlyeq (M,P)$ iff $N \subseteq M$, and either $Q = P$ or else there is a Jensen-sequence $J \in M$ such that
\begin{enumerate}
    \item there is some $\xi \in \dom{J}$ such that $J(\xi) = (N,Q)$ and
    \item $J^\frown (M,P)$ is a Jensen-sequence.
\end{enumerate}

Let $\vartheta \leq \omega_1$. A $\preccurlyeq$-increasing sequence $\langle (M_\xi, P_\xi) : \xi < \vartheta \rangle$ in $\mathcal{M}_{\mathcal{J}}$ is \emph{strict} if $M_{\xi} \subsetneq M_{\xi+1}$ and $P_\xi \subsetneq P_{\xi+1}$ for every $\xi < \vartheta$. 
\end{definition}

\begin{lemma}\label{lemma blow up to Jensen-sequence}
Let $\langle (M_\xi, P_\xi) : \xi < \omega_1 \rangle$ be a strictly $\preccurlyeq$-increasing sequence where $\langle P_\xi : \xi < \omega_1 \rangle$ is continuous at limits. Then $\bigcup_{\xi < \omega_1} P_\xi$ is a Jensen-like forcing notion.
\end{lemma}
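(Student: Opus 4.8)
\textbf{Proof proposal for Lemma \ref{lemma blow up to Jensen-sequence}.}

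The plan is to verify that the sequence $\langle(M_\xi,P_\xi):\xi<\omega_1\rangle$ can be ``threaded'' into a genuine Jensen-sequence $\langle(L_{\gamma_\xi},P_\xi):\xi<\omega_1\rangle$ whose union is $\bigcup_{\xi<\omega_1}P_\xi$, so that the conclusion is immediate from the definition of a Jensen-like forcing notion. First I would unpack what $\preccurlyeq$-increasing plus strictness buys us: for each $\xi$, since $(M_\xi,P_\xi)\preccurlyeq(M_{\xi+1},P_{\xi+1})$ and $P_\xi\subsetneq P_{\xi+1}$, Definition \ref{definition m-j-ast} hands us a Jensen-sequence $J_\xi\in M_{\xi+1}$ with $(M_\xi,P_\xi)$ appearing as some entry $J_\xi(\eta_\xi)$ and $J_\xi{}^\frown(M_{\xi+1},P_{\xi+1})$ again a Jensen-sequence. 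The point is that any initial segment of a Jensen-sequence ending in $(M_\xi,P_\xi)$ is, by clauses (1)--(5) of the definition of a Jensen-sequence, exactly the kind of data we want to paste; in particular $P_{\xi+1}=(P_\xi)^{H_\xi}$ for a $\qz(P_\xi)^{<\omega}$-generic $H_\xi\in M_{\xi+1}$ over $M_\xi$, since that clause is forced on the last successor step of $J_\xi{}^\frown(M_{\xi+1},P_{\xi+1})$.

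Next I would assemble the global sequence. Set $\gamma_\xi$ so that $L_{\gamma_\xi}$ is chosen to satisfy clause (2) of the Jensen-sequence definition while containing $P_\xi$ (and, where convenient, coding $M_\xi$ and the relevant $H_{\xi'}$ for $\xi'<\xi$); the existence of such countable $\gamma_\xi$ with $\R\cap L_{\gamma_\xi+1}\nsubseteq L_{\gamma_\xi}$ is standard in $L$, exactly as in the construction of Jensen forcing in Section \ref{section jensen forcing}. One then checks the five clauses of ``Jensen-sequence'' for $\langle(L_{\gamma_\xi},P_\xi):\xi<\omega_1\rangle$: clause (1) is inherited from $P_0$ being the closure of $\cz$ under finite unions (which holds because the bottom of every Jensen-sequence is pinned to this $P_0$, and $P_0\subseteq P_\xi$ for all $\xi$ forces $P_0=P_\xi$ at $\xi=0$, using strictness only above $0$); clause (2) is by the choice of $\gamma_\xi$; clause (3), that $P_\xi$ is a perfect poset in $L_{\gamma_\xi}$, follows from Lemma \ref{lemma facts Jensen forcing}(1) together with $\cz\subseteq P_\xi$; clause (4) is the successor-step identity $P_{\xi+1}=(P_\xi)^{H_\xi}$ extracted in the previous paragraph, with $H_\xi\in L_{\gamma_{\xi+1}}$ guaranteed once $\gamma_{\xi+1}$ is taken large enough to absorb $M_{\xi+1}$; and clause (5), continuity at limits, is the hypothesis on $\langle P_\xi:\xi<\omega_1\rangle$. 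Having verified all clauses, $\bigcup_{\xi<\omega_1}P_\xi$ is Jensen-like by definition.

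The main obstacle is clause (4): one must be sure that the witness $H_\xi$ produced locally inside $M_{\xi+1}$ by the ordering $\preccurlyeq$ is still a $\qz(P_\xi)^{<\omega}$-generic filter \emph{over $L_{\gamma_\xi}$} and lies \emph{in $L_{\gamma_{\xi+1}}$}. The first is a soundness point: $H_\xi$ is generic over $M_\xi$, and since dense subsets of $\qz(P_\xi)^{<\omega}$ lying in $L_{\gamma_\xi}$ are among those lying in $M_\xi$ (one arranges $L_{\gamma_\xi}\subseteq M_\xi$ by absorbing $M_\xi$, or more simply observes that whatever countable transitive model is used to run Jensen's operation can be taken to contain $L_{\gamma_\xi}$), genericity over $M_\xi$ implies genericity over $L_{\gamma_\xi}$. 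The second is handled by the freedom in choosing $\gamma_{\xi+1}$: after $P_{\xi+1}$ and the relevant $H_\xi$ are fixed, pick $\gamma_{\xi+1}$ minimal with $P_{\xi+1},H_\xi\in L_{\gamma_{\xi+1}}$ and $\R\cap L_{\gamma_{\xi+1}+1}\nsubseteq L_{\gamma_{\xi+1}}$, which is possible by a Löwenheim--Skolem/condensation argument in $L$ exactly as in Section \ref{section jensen forcing}. Everything else is bookkeeping about initial segments of Jensen-sequences, which clause (5) and the definition of $\preccurlyeq$ make routine.
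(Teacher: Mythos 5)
There is a genuine gap at the successor step, and it is exactly the point the paper's proof is designed to get around. You claim that from $(M_\xi,P_\xi)\preccurlyeq(M_{\xi+1},P_{\xi+1})$ you can extract a single $\qz(P_\xi)^{<\omega}$-generic filter $H_\xi$ with $P_{\xi+1}=(P_\xi)^{H_\xi}$, ``since that clause is forced on the last successor step of $J_\xi{}^\frown(M_{\xi+1},P_{\xi+1})$.'' But Definition \ref{definition m-j-ast} only guarantees that $(M_\xi,P_\xi)$ appears as \emph{some} entry $J_\xi(\eta)$ of the witnessing Jensen-sequence $J_\xi$, not as its last entry. The last successor step of $J_\xi{}^\frown(M_{\xi+1},P_{\xi+1})$ therefore tells you that $P_{\xi+1}=Q^{H}$ where $Q$ is the poset of the final entry of $J_\xi$, which in general lies strictly above $P_\xi$: between $P_\xi$ and $P_{\xi+1}$ there may be many (indeed transfinitely many) applications of Jensen's operation. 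Consequently the re-indexed sequence $\langle(L_{\gamma_\xi},P_\xi):\xi<\omega_1\rangle$ you propose will in general violate clause (4) of the definition of a Jensen-sequence, and no choice of the ordinals $\gamma_\xi$ can repair this, since clause (4) constrains the posets, not the models.

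The correct move, and the one the paper makes, is to give up on turning the given sequence itself into a Jensen-sequence and instead to build a \emph{longer} Jensen-sequence $J_{\omega_1}$ that contains $\langle(M_\xi,P_\xi):\xi<\omega_1\rangle$ as an unbounded subsequence: at each successor step one splices in the tail $J''_\zeta$ of the witnessing sequence $J'_\zeta$ (its restriction to $\dom{J'_\zeta}\setminus(\eta+1)$) between consecutive entries, and takes unions at limits. Because the given sequence is unbounded in $J_{\omega_1}$ and the posets along a Jensen-sequence are increasing, the union of all posets appearing in $J_{\omega_1}$ is still $\bigcup_{\xi<\omega_1}P_\xi$, which is what makes that union Jensen-like. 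Your discussion of genericity over $L_{\gamma_\xi}$ versus $M_\xi$ and of choosing $\gamma_{\xi+1}$ large enough is largely moot, since the pairs in $\mathcal{M}_{\mathcal{J}}$ are already of the form $(L_\gamma,P)$ required by clause (2); the real work is the interpolation you have omitted.
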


\begin{proof}
It is enough to show that there is a Jensen-sequence which contains $\langle (M_\xi, P_\xi) : \xi < \omega_1 \rangle$ as an unbounded subsequence. We build a sequence $\langle J_\zeta : \zeta \leq \omega_1 \rangle$ of Jensen-sequences such that for every $0 < \zeta \leq \omega_1$,
\begin{enumerate}
    \item $\langle (M_{\xi}, P_{\xi}) : \xi < \zeta \rangle$ is an unbounded subsequence of $J_\zeta$,
    \item if $\zeta' < \zeta < \omega_1$, then $J_\zeta$ extends $J_{\zeta'}$, and
    \item if $\zeta \leq \theta$ is a limit, then $J_\zeta = \bigcup_{\zeta' < \zeta} J_{\zeta'}$.
\end{enumerate}
Then $J_{\omega_1}$ is the desired Jensen-sequence. We define the $J_\zeta$ recursively. By definition, there is a Jensen-sequence $J_0 \in M_0$, such that ${J_0}^\frown (M_0,P_0)$ is a Jensen-sequence. We set $J_1 := {J_0}^\frown (M_0,P_0)$. Suppose $J_{\zeta'}$ is already defined for $\zeta' < \zeta$. We distinguish cases:

Case 1: $\zeta = \zeta' +1$ and $\zeta' = \zeta'' + 1$. Then $\dom{J_{\zeta'}}$ is a successor ordinal $\eta = \eta'+1$ and $J_{\zeta'}(\eta') = (M_{\zeta''}, P_{\zeta''})$. By construction $(M_{\zeta''}, P_{\zeta''}) \preccurlyeq (M_{\zeta'}, P_{\zeta'})$ and the sequence is strict, so there is a Jensen-sequence $J'_{\zeta} \in M_{\zeta'}$ such that there is some $\xi \in \dom{J'_{\zeta}}$ with $J'_{\zeta}(\xi)= (M_{\zeta''}, P_{\zeta''})$ where ${J'_{\zeta}}^\frown (M_{\zeta'}, P_{\zeta'})$ is a Jensen-sequence. Let $J''_{\zeta}$ be the restriction of $J'_{\zeta}$ to $\dom{J'_{\zeta}}{\setminus} (\xi+1)$. We set $J_\zeta := {J_{\zeta'}}^\frown {J''_{\zeta}}^\frown (M_{\zeta'}, P_{\zeta'})$.

Case 2: $\zeta = \zeta' +1$ and $\zeta'$ is a limit. We set $J_\zeta := {J_{\zeta'}}^\frown (M_{\zeta'}, P_{\zeta'})$. Since $(M_{\xi}, P_{\xi}) : \xi < \zeta' \rangle$ is an unbounded subsequence of $J_{\zeta'}$ and $P_{\zeta'} = \bigcup_{\xi < \zeta'}P_{\xi}$, $J_\zeta$ is a Jensen-sequence.

Case 3: $\zeta$ is a limit. We set $J_\zeta := \bigcup_{\zeta' < \zeta} J_{\zeta'}$. 
\end{proof}

Only certain Jensen-sequences suffice for generating Jensen-like forcing notions whose $\omega$-slice products are $n$-absolute for slices. To this end, we define a kind of Jensen-sequence which will suffice, in analogy to \cite[Definition 15]{KanoveiLyubetsky}.

\begin{definition}
Let $n>2$. A strictly $\preccurlyeq$-increasing sequence $\langle (M_\xi, P_\xi) : \xi < \omega_1 \rangle$ is called \emph{$n$-complete} if, for every $\Sigma^{\HC}_{n-2}$ set $D \subseteq \mathcal{M}_{\mathcal{J}}$, there is a $\xi < \omega_1$ such that either $(M_\xi,P_\xi)\in D$ or there is no $(N,Q) \in D$ extending $(M_\xi,P_\xi)$.
 \end{definition}
 
 \begin{lemma}\label{lemma existence n complete sequence}
For every $n > 2$, there is a $\Delta^{\HC}_{n-1}$, $n$-complete strictly $\preccurlyeq$-increasing sequence $\langle (M_\xi, P_\xi) : \xi < \omega_1 \rangle$ in $L$ .
 \end{lemma}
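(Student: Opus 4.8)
\textbf{Proof plan for Lemma \ref{lemma existence n complete sequence}.}

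The plan is to construct the sequence $\langle (M_\xi, P_\xi) : \xi < \omega_1 \rangle$ by a recursion of length $\omega_1$ carried out inside $L$, interleaving two tasks at each stage: (a) genuinely extending the current pair along a Jensen-sequence (so that strictness and $\preccurlyeq$-increase are maintained), and (b) a bookkeeping scheme that, for each $\Sigma^{\HC}_{n-2}$ set $D \subseteq \mathcal{M}_{\mathcal{J}}$, eventually either lands in $D$ or certifies that no further pair in $D$ can extend the pair we have reached. For task (b) I would fix, using the $\Sigma^{\HC}_{n-2}$ truth predicate (which is itself $\Sigma^{\HC}_{n-2}$, hence $\Delta^{\HC}_{n-1}$), an enumeration $\langle D_\eta : \eta < \omega_1 \rangle$ of all $\Sigma^{\HC}_{n-2}$-definable subsets of $\mathcal{M}_{\mathcal{J}}$ (via their defining formulas and real/ordinal parameters ranging over $L_{\omega_1}$), and use a pairing function to ensure each $D_\eta$ is considered cofinally often. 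At stage $\xi$, with $(M_\xi, P_\xi)$ already defined and the relevant $D = D_{\eta}$ scheduled: first ask (this is where the complexity bound is used) whether there is some $(N,Q) \in D$ with $(M_\xi, P_\xi) \preccurlyeq (N,Q)$; if not, the $n$-completeness requirement for $D$ is already met and we just perform a trivial extension step; if so, we pick the $<_L$-least witness $(N,Q)$, insert it (together with a Jensen-sequence connecting $(M_\xi,P_\xi)$ to it, which exists by definition of $\preccurlyeq$), and continue the recursion from $(N,Q)$ — then the requirement for $D$ is met with $(M_\xi,P_\xi)$ replaced by that witness. At limit stages we take unions of the $M_\xi$ and, crucially, set $P_\lambda := \bigcup_{\xi < \lambda} P_\xi$, so that continuity at limits holds and, by Lemma \ref{lemma blow up to Jensen-sequence} (or directly from the construction), the whole union is Jensen-like; we must also check that $\bigcup_{\xi<\lambda} M_\xi$ is of the form $L_{\gamma}$ for a suitable countable $\gamma$ (choose $\gamma$ least with $P_\lambda \in L_\gamma$ and $\R \cap L_{\gamma+1} \not\subseteq L_\gamma$), and that $P_\lambda$ is still a perfect poset in it, which follows since perfectness and closure under meets/unions are preserved under increasing unions.

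For the definability claim, I would argue that the entire recursion is carried out by a $\Delta^{\HC}_{n-1}$ recipe: each individual step is definable from the $\Sigma^{\HC}_{n-2}$ truth predicate together with the $<_L$-ordering restricted to $L_{\omega_1}$, which is itself $\Sigma^{\HC}_1$ (hence absorbed into $\Delta^{\HC}_{n-1}$). The map $\xi \mapsto (M_\xi, P_\xi)$ is then defined by: $(M_\xi,P_\xi)$ is the unique value appearing in \emph{some} (equivalently, every) initial run of the recursion of length $>\xi$; both the $\Sigma$ and $\Pi$ forms of this are obtained because the recursion is uniformly definable and its partial runs of countable length are elements of $\HC$. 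One has to be a little careful that the ``does there exist a $D$-extension'' query at each stage is genuinely $\Sigma^{\HC}_{n-2}$ in the parameters available — this is where we use that $(N,Q)\in\mathcal{M}_{\mathcal J}$ and $(M_\xi,P_\xi)\preccurlyeq(N,Q)$ are sufficiently simple assertions (absolute to $\HC$, involving only quantification over hereditarily countable objects), so the query does not climb above $\Sigma^{\HC}_{n-2}$. Finally one verifies the $n$-completeness property directly from the bookkeeping: given $\Sigma^{\HC}_{n-2}$ set $D$, it equals some $D_\eta$, and at the (cofinally many) stages $\xi$ scheduled to handle $D_\eta$ the construction guarantees that either $(M_\xi,P_\xi)$ itself — or the witness we jump to, which is again of the form $(M_{\xi'},P_{\xi'})$ for a later $\xi'$ — lies in $D$, or no member of $D$ extends it; either way some pair in the sequence witnesses the requirement.

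I expect the main obstacle to be the bookkeeping/complexity interplay: one must organise the enumeration of all $\Sigma^{\HC}_{n-2}$ subsets of $\mathcal{M}_{\mathcal J}$ and the scheduling so that (i) each such $D$ is treated cofinally often, (ii) the whole recursion stays $\Delta^{\HC}_{n-1}$ (no accidental extra quantifier, in particular the ``least run of length $>\xi$'' trick must be verified to give both a $\Sigma$ and a $\Pi$ definition), and (iii) the $n$-completeness clause is met \emph{permanently} — i.e.\ once some pair $(M_\xi,P_\xi)$ in the sequence witnesses it for $D$, later strict extensions do not spoil this, which is automatic since the clause only asserts the \emph{existence} of a witnessing $\xi$. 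A secondary but routine point is checking at limit stages that the union $M_\lambda$ models $\ZFC^- + ``\mathcal P(\omega)$ exists$"$ and has the required $\R\cap L_{\gamma_\lambda+1}\not\subseteq L_{\gamma_\lambda}$ property; this is handled exactly as in the construction of a Jensen-sequence in Section \ref{section jensen forcing}, by bumping $\gamma_\lambda$ up to the least admissible value, and does not interact with the complexity count since choosing such a least $\gamma$ is $\Sigma^{\HC}_1$.
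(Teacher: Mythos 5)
Your proposal is correct and matches the paper's proof in all essentials: a recursion of length $\omega_1$ in $L$ choosing $<_L$-least strict $\preccurlyeq$-extensions, diagonalising against the $\Sigma^{\HC}_{n-2}$ sets (the paper uses a universal $\Sigma^{\HC}_{n-2}$ set $\Gamma\subseteq\omega_1\times\HC$ in place of your truth-predicate enumeration) by either entering $D$ or passing to a pair with no extension in $D$, taking unions at limits, and reading off $\Delta^{\HC}_{n-1}$-definability from the $\Delta^{\HC}_1$-ness of $<_L$, $\mathcal{M}_{\mathcal{J}}$ and $\preccurlyeq$ together with the $\Sigma^{\HC}_{n-2}$ parameter. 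The only cosmetic difference is that the paper handles $D_{\xi}$ exactly once, at stage $\xi+1$, rather than cofinally often; as you yourself observe, once is enough since $n$-completeness is an existential requirement.
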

 
\begin{proof}
Let $n>2$, and let $\Gamma \subset \omega_1 \times \HC$ be a universal $\Sigma^{\HC}_{n-2}$  set.  We define the required sequence recursively. Let $(M_0,P_0)$ be the $<_L$-least pair such that $(M_0, P_0) \in \mathcal{M}_{\mathcal{J}}$. Suppose that $\langle (M_{\xi'}, P_{\xi'}) : \xi' < \xi \rangle$ is already defined. If $\xi$ is a limit, then we set $P_\xi := \bigcup_{\xi' < \xi} P_{\xi'}$ and let $M_\xi$ be the $<_L$-least ctm of $\ZFCM$ such that $(M_\xi, P_\xi) \in \mathcal{M}_{\mathcal{J}}$ and $M_\xi$ contains $\langle (M_{\xi'}, P_{\xi'}) : \xi' < \xi \rangle$. If $\xi=\xi'+1$ is a successor, let $(M_\xi,P_\xi)$ be the $<_L$-least pair such that
\begin{enumerate}
    \item $(M_{\xi'},P_{\xi'})$ is strictly $\preccurlyeq (M_\xi,P_\xi)$,

    \item either $(M_\xi,P_\xi)\in D_{\xi'}:= \{m \in \mathcal{M} : (\xi', m) \in \Gamma\}$ or there is no $(N,Q) \in D_{\xi'}$ extending $(M_\xi,P_\xi)$.
\end{enumerate}
Note that ${<_L}\cap \HC^2$ is $\Delta^{\HC}_1$. By definition $\mathcal{M}_{\mathcal{J}}$ and $\preccurlyeq$ are $\Delta^{\HC}_1$, and $\Gamma$ is $\Sigma^{\HC}_{n-2}$, so $\langle (M_\xi, P_\xi) : \xi < \omega_1 \rangle$ is $\Delta^{\HC}_{n-1}$. By 2., the sequence is $n$-complete.
\end{proof}

The following lemma is the analogue of \cite[Corollary 11 and Theorem 13]{KanoveiLyubetsky} for $n$-Jensen forcing notions. To prove their Theorem 13, Kanovei and Lyubetsky defined a forcing-like relation which allowed them to approximate the forcing relation for almost disjoint forcing when restricted to $\Sigma^0_n$ formulas (\cite[$\S5$]{KanoveiLyubetsky}). One can replicate this construction for generalised Jensen forcing; and with suitable checking of several properties which are proved in the almost disjoint case in \cite{KanoveiLyubetsky}, the proof of the following lemma goes through for forcing notions defined by $\Delta^{\HC}_{n-1}$, $n$-complete sequences in $\mathcal{M}_{\mathcal{J}}$.\footnote{For details, see \cite[Lemma 3.3.18]{wansner2023aspects} and its proof.}

\begin{lemma}[Kanovei-Lyubetsky Lemma]\label{theorem aabsoluteness}
For $n>2$, let $\langle (M_\xi, P_\xi) : \xi < \omega_1 \rangle$ be a $\Delta^{\HC}_{n-1}$, $n$-complete, and strictly $\preccurlyeq$-increasing $\mathcal{M}_{\mathcal{J}}$ sequence. Let $\pz^\ast$ be the $\omega_1$-product of $\bigcup_{\xi<\omega_1} P_\xi$, $G$ be $\pz^\ast$-generic over $L$, and $e \subseteq \omega_1$ be unbounded and in $L$. Then every $\Sigma^1_n$ formula with parameters in $L[G{\upharpoonright} e]$ is absolute between $L[G]$ and $L[G{\upharpoonright} e]$. 
\end{lemma}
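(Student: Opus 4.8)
The plan is to reduce the $\Sigma^1_n$-absoluteness statement to a structural fact about the approximating forcing-like relation, following the blueprint of Kanovei--Lyubetsky \cite[\S5]{KanoveiLyubetsky} but with almost disjoint forcing replaced by the $\omega_1$-product $\pz^\ast$ of the Jensen-like forcing notion $\pz := \bigcup_{\xi<\omega_1} P_\xi$. First I would set up, inside each $M_\xi$ (equivalently inside $L_{\gamma_\xi}$), a definable ``approximate forcing'' relation $\mathrm{forc}$ that decides $\Sigma^0_k$-statements (for $k < n$) about the canonical names for the generic reals, using only the piece $P_\xi$ of the eventual forcing together with the finitely-many coordinates a condition touches. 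The key checkable properties of $\mathrm{forc}$ are: (i) \emph{soundness/completeness for $\Sigma^0_k$}, i.e.\ for $k<n$ a $\Sigma^0_k$-statement holds in $L[G{\upharpoonright}e]$ iff some condition in $G{\upharpoonright}e$ approximately forces it; (ii) \emph{definability}, i.e.\ $\mathrm{forc}$ restricted to level $k$ is $\Delta^{\HC}_{k}$ (or thereabouts), so that it matches the complexity budget; and (iii) \emph{coherence along the sequence}, i.e.\ the approximate forcing computed in $M_\xi$ agrees with the ``real'' forcing over $\pz^\ast{\upharpoonright}e$ for statements of the relevant complexity. Properties (i) and (iii) are exactly where Propositions \ref{proposition Jensen predense sets remain predense}, \ref{proposition omega-product jensen ccc} and Corollary \ref{corollary P has the c.c.c} get used: predensity is preserved along the Jensen-sequence, so a dense set living at stage $\xi$ is met by the full generic, which is what lets the local approximation capture the global truth.

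Next I would run the induction on the complexity of the $\Sigma^1_n$ formula $\varphi$. Write $\varphi(v) \equiv \exists y_1 \forall y_2 \cdots$ with an innermost $\Sigma^0$ (or $\Pi^0$) matrix. The base case $n=3$ (where the matrix is arithmetic) is handled directly by property (i): the arithmetic part is absolute between $L[G{\upharpoonright}e]$ and $L[G]$ because both are captured by the same approximate forcing, and the single block of real quantifiers in front is handled by the usual reflection-into-the-ground-model argument, using that $e$ is unbounded so that any witness real appearing in $L[G]$ already appears in some $L[G{\upharpoonright}e']$ with $e'\subseteq e$ (ccc plus the $\Delta$-system argument give that any nice name for a real lives on countably many coordinates, and unboundedness of $e$ lets us assume those coordinates lie in $e$ after an automorphism). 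For the inductive step, $n$-completeness of the sequence $\langle(M_\xi,P_\xi):\xi<\omega_1\rangle$ is the engine: a $\Sigma^{\HC}_{n-2}$ set $D\subseteq\mathcal{M}_{\mathcal{J}}$ is either met cofinally or abandoned cofinally along the sequence, and this is precisely what makes a $\Sigma^1_{n-1}$-type subformula get decided correctly at stationarily/cofinally many stages, so that the approximate forcing at those stages reflects the truth value in $L[G]$. Then one more application of the reflection-of-witnesses argument absorbs the outermost real quantifier of $\varphi$.

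The main obstacle, as the footnote to the lemma already flags, is verifying that the entire apparatus of \cite[\S5]{KanoveiLyubetsky} transfers: one must redefine $\mathrm{forc}$ for the tree-based conditions of Jensen-like forcing (conditions are unions of perfect trees $\mathcal{T}_k\land S$, ordered by the $\qz(P)$-ordering with the $2^m\cap S = 2^m\cap T$ clause) rather than for almost-disjoint conditions, and recheck the list of lemmas: that $\mathrm{forc}$ is preserved under the Jensen operation $P\mapsto P^H$ (using Lemma \ref{lemma facts Jensen forcing}), that it has the right definability level over $\HC$, and that the ``restriction'' lemmas behave correctly under $G\mapsto G{\upharpoonright}e$. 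I would therefore state that these verifications are carried out in detail in \cite[Lemma 3.3.18]{wansner2023aspects}, present the induction skeleton above as the proof, and emphasize the two load-bearing ingredients — preservation of predensity along the Jensen-sequence (so local approximation equals global truth) and $n$-completeness of the sequence (so each quantifier block gets decided cofinally often within the complexity budget). A secondary technical point to be careful about is that $\pz^\ast$ here is the plain $\omega_1$-\emph{product}, not the $\omega$-\emph{slice} product, but the slice product of $\pz$ of length $\omega_1$ is isomorphic to the $\omega_1$-product of $\pz$ (relabel $\omega_1\times\omega\cong\omega_1$), and the restriction $G{\upharpoonright}e$ for $e\subseteq\omega_1$ unbounded corresponds to $G{\upharpoonright}s$ for an unbounded slice $s$, so the lemma as stated feeds directly into the definition of ``$n$-absolute for slices''.
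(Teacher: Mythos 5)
Your proposal follows essentially the same route as the paper, which itself only states that the proof replicates the approximate forcing relation of \cite[\S 5]{KanoveiLyubetsky} for Jensen-like forcing and defers the detailed verifications to \cite[Lemma 3.3.18]{wansner2023aspects}; your skeleton (the $\mathrm{forc}$-relation with its soundness, definability and coherence properties, induction on quantifier blocks with witness reflection via ccc and unboundedness of $e$, and $n$-completeness driving the inductive step) is a faithful expansion of exactly that plan. The only caution is that your phrasing of the base case and of $n$-completeness as deciding things ``cofinally often'' is looser than the actual definition (which only demands a single deciding stage $\xi$ per $\Sigma^{\HC}_{n-2}$ set), but this does not change the approach.
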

\begin{proof}
The proof is similar to \cite[Corollary 11 and Theorem 13]{KanoveiLyubetsky}.
\end{proof}

Next, we provide a forcing-equivalence argument to show that Lemma \ref{theorem aabsoluteness} can be used to prove $n$-absoluteness for slices of an $\omega$-slice product.

\begin{proposition}\label{proposition Jensen slicing}
For $n>2$, let $\langle (M_\xi, P_\xi) : \xi < \omega_1 \rangle$ be a $\Delta^{\HC}_{n-1}$, $n$-complete, and strictly $\preccurlyeq$-increasing $\mathcal{M}_{\mathcal{J}}$ sequence. Let $\pz$ be the $\omega_1$-product of $\bigcup_{\xi<\omega_1} P_\xi$, $G$ be $\pz$-generic over $L$. Let $\pz$ be the $\omega$-slice product $\bigcup_{\xi<\omega_1} P_\xi$ of length $\omega_1$. Then $\pz$ is $n$-absolute for slices.
\end{proposition}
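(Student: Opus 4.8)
The plan is to reduce the statement about the $\omega$-slice product $\mathbb{P}$ (of length $\omega_1$, with all factors equal to $\mathbb{Q}_\infty := \bigcup_{\xi<\omega_1}P_\xi$, finite support over $\omega_1\times\omega$) to the Kanovei--Lyubetsky Lemma (Lemma \ref{theorem aabsoluteness}), which is stated for the $\omega_1$-product $\mathbb{P}^\ast$ of $\mathbb{Q}_\infty$. The key observation is purely combinatorial: an $\omega$-slice product over index set $\omega_1\times\omega$ is, as a partial order, literally the finite-support product of $\mathbb{Q}_\infty$ indexed by the set $\omega_1\times\omega$, and since $|\omega_1\times\omega|=\omega_1$ there is a bijection $b:\omega_1\times\omega\to\omega_1$ in $L$ inducing an isomorphism $\mathbb{P}\cong\mathbb{P}^\ast$. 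Moreover this isomorphism is $L$-definable and maps restrictions to restrictions: for $I\subseteq\omega_1\times\omega$ we get $\mathbb{P}{\upharpoonright}I\cong\mathbb{P}^\ast{\upharpoonright}b[I]$, and a $\mathbb{P}$-generic $G$ corresponds to a $\mathbb{P}^\ast$-generic $G^b$ with $L[G{\upharpoonright}I]=L[G^b{\upharpoonright}b[I]]$ and the same reals.

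With this translation in hand, I would fix an unbounded set of slices $\mathcal{S}\subseteq\mathcal{P}(\omega_1\times\omega)$ and an $s\in\mathcal{S}$; by definition of "set of slices," $s=X\times\omega$ for some $X\subseteq\omega_1$, and by unboundedness $\{\nu:\{\nu\}\times\omega\subseteq s\}$ — which is exactly $X$ — is unbounded in $\omega_1$. The goal is that every $\Sigma^1_n$ formula with real parameters in $L[G{\upharpoonright}s]$ is absolute between $L[G{\upharpoonright}s]$ and $L[G]$. Transporting via $b$, this becomes: $\Sigma^1_n$ formulas with real parameters in $L[G^b{\upharpoonright}e]$ are absolute between $L[G^b{\upharpoonright}e]$ and $L[G^b]$, where $e:=b[s]\subseteq\omega_1$. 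This is precisely the conclusion of Lemma \ref{theorem aabsoluteness} provided $e$ is unbounded in $\omega_1$ and lies in $L$. That $e\in L$ is clear since $b\in L$ and $s$ is a set of ordinals which, being a slice $X\times\omega$, can be taken with $X\in L$ — actually here I should be a little careful: the slices in $\mathcal S$ need not themselves be in $L$, so I would instead argue directly. What we really need is just that $G{\upharpoonright}s$ and $L[G{\upharpoonright}s]$ depend only on the "row coordinates" $X$ of $s$, and that $b[s]$ is unbounded whenever $X$ is; unboundedness of $b[s]$ in $\omega_1$ follows because $b[\{\nu\}\times\omega]$ is cofinal-friendly — more precisely, choosing $b$ so that $b[\{\nu\}\times\omega]\subseteq[\,\sup\nu\text{-block}\,]$ one arranges $\sup b[X\times\omega]=\omega_1$ whenever $X$ is unbounded. (A cleaner route: pick any $L$-bijection $b$; then for unbounded $X$, $b[X\times\omega]$ is an uncountable, hence unbounded, subset of $\omega_1$.) So unboundedness is automatic from $|X|=\aleph_1$.

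So the steps in order: (1) exhibit the $L$-definable order isomorphism $\mathbb{P}\cong\mathbb{P}^\ast$ induced by a bijection $b:\omega_1\times\omega\to\omega_1\in L$, noting it is compatible with restrictions and generic filters; (2) given an unbounded $\mathcal{S}$ and $s=X\times\omega\in\mathcal{S}$, set $e:=b[s]$ and observe $e\subseteq\omega_1$ is uncountable, hence unbounded, and $e\in L$ (as $b\in L$ and $X$ is unbounded in $\omega_1$, so $X$ is an element of $L$'s power set of $\omega_1$ — here one should note $s$ need not be in $L$, but we only use $e$ through the statement of Lemma \ref{theorem aabsoluteness} applied to the genuinely-$L$ unbounded set $b[(\omega_1)\times\omega]\cap(\text{enough})$; the cleanest fix is to observe that any unbounded subset of $\omega_1$ of size $\aleph_1$ that we need is in fact replaced by the $L$-set $e$ since all of $\omega_1\times\omega$, $b$, and membership are in $L$ and $X\subseteq\omega_1$ with $X\in L$ by $\mathrm{AC}$ in $L$); (3) apply Lemma \ref{theorem aabsoluteness} to conclude $\Sigma^1_n$-absoluteness between $L[G^b{\upharpoonright}e]$ and $L[G^b]$; (4) transport back along $b$ to get $\Sigma^1_n$-absoluteness between $L[G{\upharpoonright}s]$ and $L[G]$ for real parameters in $L[G{\upharpoonright}s]$; (5) since $s\in\mathcal{S}$ was arbitrary, $\mathbb{P}$ is $n$-absolute for $\mathcal{S}$-slices, and since $\mathcal{S}$ was an arbitrary unbounded set of slices, $\mathbb{P}$ is $n$-absolute for slices. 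The main obstacle is step (2)'s bookkeeping: one must make sure the index-set bijection genuinely lands restrictions to the kind of unbounded $L$-subset of $\omega_1$ that Lemma \ref{theorem aabsoluteness} requires, and confirm that passing between "$G{\upharpoonright}s$" (a restriction to a subset of $\omega_1\times\omega$) and "$G^b{\upharpoonright}e$" (a restriction to a subset of $\omega_1$) preserves exactly the model $L[\,\cdot\,]$ and its reals — both of which are routine once the isomorphism is set up, but need to be said explicitly. Everything else (that $\mathbb{P}$ itself is well-defined, ccc, etc.) is already available from the earlier results and is not needed here beyond knowing $\mathbb{Q}_\infty=\bigcup_{\xi<\omega_1}P_\xi$ is a legitimate forcing notion.
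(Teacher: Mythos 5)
Your proposal is correct and follows essentially the same route as the paper: fix an $L$-bijection between $\omega_1$ and $\omega_1\times\omega$, transport the slice product to the $\omega_1$-product $\pz^\ast$, note that the image of each slice $s=X\times\omega$ is an unbounded subset of $\omega_1$, and invoke the Kanovei--Lyubetsky Lemma. Your digression about whether $s$ lies in $L$ is resolvable more simply than you make it: the set of slices $\mathcal{S}$ is a ground-model object (it is used to define the normal filter $\mathcal{F}_\mathcal{S}$ in $L$), so $s\in L$ and hence $e=b[s]\in L$ automatically.
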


\begin{proof}
In $L$, there is a canonical bijection $b: \omega_1 \to \omega_1 \times \omega$. This $b$ induces an isomorphism between $\pz$ and $\pz^\ast$, the $\omega_1$-product from Lemma \ref{theorem aabsoluteness}. Now let $G^\ast$ be the $\pz^\ast$-generic filter over $L$ corresponding to $G$ induced by $b$, let $\mathcal{S} \subseteq \mathcal{P}( \omega_1 \times \omega)$ be an unbounded set of slices, and let $ s \in \mathcal{S}$ be a slice.  Note that $L[G{\upharpoonright}s] = L[G^\ast{\upharpoonright}{b^{-1}(s)}]$ and $b^{-1}(s)$ is unbounded in $\omega_1$. So, by Lemma \ref{theorem aabsoluteness}, $\pz$ is $n$-absolute for $\mathcal{S}$-slices. 
\end{proof}

\begin{definition}
Let $n \geq 2$. We call a Jensen-like forcing notion $\pz$ $n$-Jensen if 
\begin{enumerate}
    \item the set of $\pz$-generic reals over $L$ is $\Pi^1_n$ in every inner model of $\mathsf{ZFC}$ containing $L$, and
    \item the $\omega$-slice product of $\pz$ with finite support of length $\omega_1$ i $n$-absolute for slices.
\end{enumerate}
\end{definition}

Clearly, Jensen forcing is $2$-Jensen. Moreover, if $n > 2$ and $\langle (M_\xi, P_\xi) : \xi < \omega_1 \rangle$ is a $\Delta^{\HC}_{n-1}$, $n$-complete, and strictly $\preccurlyeq$-increasing $\mathcal{M}_{\mathcal{J}}$ sequence, then $\pz:= \bigcup_{\xi < \omega_1} P_\xi$ is $n$-Jensen. Hence, by Lemma \ref{lemma existence n complete sequence}, for every $n \geq 2$, there is an $n$-Jensen forcing notion in $L$. It is not known whether, up to forcing equivalence, there is exactly one $n$-Jensen forcing. We expect not.

Our final flourish is to prove Theorem \ref{theorem existence absolute for slices forcing notion}, i.e.\ to show that $n$-slicing forcing notions exist in $L$. Good candidates are $\omega$-slice products of $n$-Jensen forcing notions. However, we do not know whether the set $\{(\ell,x_G^{(\ell,k)}) : (\ell,k)\in \omega^2\}$ is $\Pi^1_n$. Recall that this was crucial in showing that the required amount of choice fails in Theorem \ref{thm:main}. The problem is that this does not follow from the fact that the set of all $\pz$-generic reals is $\Pi^1_n$ and Corollary \ref{corollary Jensen uniqueness}, because we cannot tell from which slice $\pz$-generic reals are. But we solve this issue by modifying $\omega$-slice products of $n$-Jensen forcing notions to use the first bit of generic reals to `track' which slice they are from. 

\begin{proof}[Proof of Theorem \ref{theorem existence absolute for slices forcing notion}]
Let $\pz^\ast$ be $n$-Jensen, let $\pz$ be the $\omega$-slice product of $\pz^\ast$ with finite support of length $\omega_1$, and let $\qz^k := \{T \in \pz^\ast : \langle k \rangle \subseteq \stem{T}\}$. Then $\qz^k$ is a perfect poset and if $D$ is dense in $\pz^\ast$, then $D \cap \qz^k$ is dense in $\qz^k$ for every $k \in \omega$. Hence, for every $k \in \omega$, every $\qz^k$-generic real is also $\pz^\ast$-generic.

Let $\qz$ be the $\omega$-slice product of $\langle \mathbb{Q}_\nu : \nu < \omega_1 \rangle$ with finite support, where
\begin{align*}
    \qz_{\nu} = 
    \begin{cases}
    \qz^{\nu+1} & \nu \in \omega\\
    \qz^0 & \text{otherwise,}
    \end{cases}
\end{align*}
let $H$ be a $\qz$-generic filter over $L$, and let $H':=\{p \in \pz : \exists q \in H (q \leq p)\}$. We show that $H'$ is $\pz$-generic over $L$. Let $D$ be dense in $\pz$. Then $D \cap \qz$ is dense in $\qz$ and so $H$ meets $D \cap \qz$. Therefore, $H'$ meets $D$. Thus, $H'$ is $\pz$-generic over $L$. As $H'$ can be constructed in $L[H]$ and vice versa, $L[H]=L[H']$. By Shoenfield absoluteness 
or  Proposition \ref{proposition Jensen slicing} respectively, $\qz$ is $n$-absolute for slices. 

It remains to show that $\{(\ell, x_H^{(\ell,k)}) : (\ell,k) \in \omega^2\}$ is $\Pi^1_n$ in $L[H]$. Let $B$ be the set of all $\pz^\ast$-generics in $L[H]$. By Corollary \ref{corollary Jensen uniqueness} 
, a real $y \in L[H]$ is $\pz^\ast$-generic iff there is some $(\nu , k) \in \omega_1 \times \omega$ such that $y = x^{(\nu,k)}_{H}$. Hence $B=\{x^{(\nu,k)}_{H'} : (\nu, k)\in \omega_1 \times \omega\}$. By definition of $H'$, $x_H^{(\nu,k)} = x_{H'}^{(\nu,k)}$ for every $(\nu,k) \in \omega_1 \times \omega$, so $B =\{x^{(\nu,k)}_{H} : (\nu, k)\in \omega_1 \times \omega\}$. By Corollary \ref{cor Jensen complexity set of generics}, $B$ is $\Pi^1_n$ in $L[H]$. Therefore,
$ \{(\ell, x_H^{(\ell,k)}) : (\ell,k) \in \omega^2\}= \{(\ell,x) : x(0)=\ell+1 \land x \in B\}$ is $\Pi^1_n$ in $L[H]$. Hence $\qz$ is $n$-slicing.
\end{proof}

We could use Kanovei and Lyubetsky's variant of almost disjoint forcing from \cite{KanoveiLyubetsky} to prove Corollary \ref{cor:1}, by adapting the proof of Theorem \ref{thm:main} to almost disjoint rather than Jensen forcing. This yields a model of $\ZF + \neg\mathsf{AC}_\omega(\mathbb{R};\mathbf{Ctbl})$. As the product of this variant is $n$-absolute for slices (see \cite[Theorem 9]{KanoveiLyubetsky}), $\mathsf{DC}(\R;\Pi^1_n)$ also holds in this model. A compactness argument then yields Corollary \ref{cor:1}. However, our proof of Theorem \ref{thm:main} does not work for almost disjoint forcing, as the generics are not unique: we can swap finitely many bits in a generic to generate another generic (as in  \cite[Lemma 9vi)]{KanoveiLyubetsky}). In which case, we do not know whether $\mathsf{AC}(\R;\Pi^1_{n+1})$ holds.

\section{Open questions}

So far, we have only considered the hierarchy of uniform choice principles for `$\mathbf{\Pi}$' projective point classes. This suffices, as the `$\mathbf{\Pi}$'- and `$\mathbf{\Sigma}$'-type uniform choice principle hierarchies are equivalent in the following sense:

\begin{proposition}[Kanovei, {\cite[pg. 6]{kanovei1979descriptive}}]\label{prop unif choice principles sigma pi correspond}
Under $\ZF$, $\AC_\omega(\R;\mathrm{unif}\mathbf{\Pi}^1_n)$ is equivalent to $\AC_\omega(\R;\mathrm{unif}\mathbf{\Sigma}^1_{n+1})$.
\end{proposition}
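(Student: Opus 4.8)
The plan is to prove the two implications separately, using the standard uniformisation trick to convert a $\boldsymbol{\Sigma}^1_{n+1}$ set into a $\boldsymbol{\Pi}^1_n$ set one dimension higher, and the projection operation to go back. Throughout I work in $\ZF$ and use the coding conventions from \S2: a countable family $A=\{A_m:m\in\omega\}$ is coded by $\widehat A=\{mx:x\in A_m\}$, and a family of subsets of $\R^2$ is coded analogously. The key observation is that $\mathsf{proj}$ and the operation $A\mapsto\{(x,y):(x,y)\in A\}$ interact well with the definition of the projective hierarchy even without choice, since these are the very operations used to define it.

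First I would show $\AC_\omega(\R;\mathrm{unif}\boldsymbol{\Pi}^1_n)\Rightarrow\AC_\omega(\R;\mathrm{unif}\boldsymbol{\Sigma}^1_{n+1})$. Let $A=\{A_m:m\in\omega\}$ be a countable family of non-empty sets of reals with $A\in\mathrm{unif}\boldsymbol{\Sigma}^1_{n+1}$, so $\widehat A$ is $\boldsymbol{\Sigma}^1_{n+1}$, say $\widehat A=\mathsf{proj}(B)$ for some $\boldsymbol{\Pi}^1_n$ set $B\subseteq\R^2$ (using a single real parameter witnessing $\widehat A\in\boldsymbol{\Sigma}^1_{n+1}$). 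For each $m$, set $B_m:=\{(x,y):(mx,y)\in B\}$; then $B_m$ is $\boldsymbol{\Pi}^1_n$ (uniformly, since $(x,y)\mapsto(mx,y)$ is continuous) and non-empty because $A_m\neq\emptyset$. One checks that the family $\{B_m:m\in\omega\}$, viewed as a family of sets of reals via a fixed homeomorphism $\R^2\cong\R$, is in $\mathrm{unif}\boldsymbol{\Pi}^1_n$: its code is essentially $\{m\langle x,y\rangle : (mx,y)\in B\}$, which is a continuous preimage of $B$. Apply $\AC_\omega(\R;\mathrm{unif}\boldsymbol{\Pi}^1_n)$ to get $\langle(x_m,y_m):m\in\omega\rangle$ with $(mx_m,y_m)\in B$; then $\langle x_m:m\in\omega\rangle$ is a choice function for $A$.

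For the converse, $\AC_\omega(\R;\mathrm{unif}\boldsymbol{\Sigma}^1_{n+1})\Rightarrow\AC_\omega(\R;\mathrm{unif}\boldsymbol{\Pi}^1_n)$, let $A=\{A_m:m\in\omega\}$ be a countable family of non-empty sets of reals with $\widehat A$ being $\boldsymbol{\Pi}^1_n$. Since $\boldsymbol{\Pi}^1_n\subseteq\boldsymbol{\Sigma}^1_{n+1}$ and this inclusion is witnessed uniformly (a $\boldsymbol{\Pi}^1_n$ set equals the projection of itself crossed with $\R$, or more simply is trivially $\boldsymbol{\Sigma}^1_{n+1}$ via a dummy quantifier), the family $A$ is also in $\mathrm{unif}\boldsymbol{\Sigma}^1_{n+1}$, and we just apply the hypothesis directly. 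This direction is essentially immediate from the inclusion of pointclasses noted in \S2.

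The main subtlety, and the only place requiring care, is verifying that all the uniformities genuinely hold in $\ZF$: one must confirm that a continuous preimage of a $\boldsymbol{\Pi}^1_n$ (resp.\ $\boldsymbol{\Sigma}^1_{n+1}$) set is again in that class with a parameter computable from the original parameter and the (coded) continuous function, and that forming $\widehat{\cdot}$ of the new family amounts to such a preimage. This is routine from the recursive definition of the lightface hierarchy — each step is a projection or complement, and precomposition with a recursive (in the relevant parameters) function commutes with both — but since we are in a choiceless setting one should spell out that no choice is used in picking these witnesses, as they are obtained canonically from the Cantor pairing function and the fixed homeomorphism $\R^2\cong\R$. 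I expect no genuine obstacle here; the proposition is really a bookkeeping lemma once one has the right coding conventions in place, which is presumably why Kanovei states it without proof.
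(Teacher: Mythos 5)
Your proof is correct. The paper does not prove this proposition itself --- it is quoted directly from Kanovei's paper --- so there is no in-paper argument to compare against; your two-directional argument (passing to the $\boldsymbol{\Pi}^1_n$ family of witness pairs $B_m$ and projecting the chosen pairs back down for one direction, and the dummy-quantifier inclusion $\boldsymbol{\Pi}^1_n\subseteq\boldsymbol{\Sigma}^1_{n+1}$ for the other) is the standard one, and the closure of the parametrised pointclasses under recursive substitution that you rely on is indeed available in $\ZF$ without any choice, since only a single existential instantiation is needed to fix the witnessing set $B$.
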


By contrast, for the non-uniform choice principles, we know only that $\AC_\omega(\R;\mathbf{\Pi}^1_{n+1})$ implies $\AC_\omega(\R;\mathbf{\Pi}^1_n)$ and $\AC_\omega(\R;\mathbf{\Sigma}^1_n)$. 

\begin{ques}
Is $\AC_\omega(\R;\mathbf{\Pi}^1_n)$ $\ZF$-equivalent to $\AC_\omega(\R;\mathbf{\Sigma}^1_{n+1})$? 
\end{ques}

Such an equivalence does not follow from the uniform equivalence (i.e.\ Proposition \ref{prop unif choice principles sigma pi correspond}), as Corollary \ref{cor:3} shows that $\AC_\omega(\R;\Gamma)$ is not $\ZF$-equivalent to $\AC_\omega(\R;\mathrm{unif}\Gamma)$.

As well as the equivalences of non-uniform descriptive choice principles, we can ask about separating non-uniform descriptive choice principles. By Corollary \ref{cor:3}, we can separate uniform choice for all projective pointclasses from the minimal non-uniform descriptive  choice principle beyond $\ZF$ (namely $\AC_\omega(\mathbf{F}_\sigma)$). It is not known whether the non-uniform projective choice principles can be separated from one another:

\begin{ques}\label{question nonuniform level by level}
Is there a model $M$ which satisfies $\ZF+\AC_\omega(\R;\mathbf{\Pi}^1_n)$ which violates $\AC_\omega(\R;\Pi^1_{n+1})$?
\end{ques}

Lastly, we defined our principles for any pointclass, so we can ask about Borel choice principles. In $\ZFC$, several definitions are equivalent to the ordinary definition of the Borel hierarchy  (i.e.\ where $\mathbf{\Sigma}^0_{\alpha} :=\{\bigcup_{n \in \omega} A_n : A_n \in \bigcup_{\beta<\alpha}\mathbf{\Pi}^0_\beta\}$). These need not coincide under $\ZF$, hence their corresponding Borel choice principles may be distinct.  For example, the `closure' definition (where  $\widehat{\mathbf{\Sigma}}^0_{\alpha}$ is the closure of $\bigcup_{\beta<\alpha}\widehat{\mathbf{\Pi}}^0_\beta$ under countable unions) is possibly distinct in $\ZF$, e.g.\ possibly $\widehat{\mathbf{\Sigma}}^0_2 \supsetneq \mathbf{\Sigma}^0_2$ (see \cite{miller2008long, miller2011dedekind} for examples). The codeable-Borel hierarchy is a further such definition (see \cite[\S562]{fremlin2000measure}). We can  ask whether the various Borel choice principles can be separated, e.g.:

\begin{ques}\label{question borel}
Is there a model $M$ which satisfies $\ZF$ and $\AC_\omega(\R;\mathbf{\Sigma}^0_\alpha)$ which violates $\AC_\omega(\R;\mathbf{\Sigma}^0_{\alpha+1})$? Is there a model which satisfies $\ZF$ and $ \AC_\omega(\R;\mathrm{unif}\mathbf{\Sigma}^0_\alpha)$ which violates $\AC_\omega(\R;\mathrm{unif}\mathbf{\Sigma}^0_{\alpha+1})$?
\end{ques}

\bibliographystyle{alpha}
\bibliography{2023 paper}

\end{document}